\theoremstyle{plain}
\newtheorem{theorem}{Theorem}[section]     
\newtheorem{lemma}[theorem]{Lemma}
\newtheorem{corollary}[theorem]{Corollary}
\theoremstyle{definition}
\newtheorem{definition}[theorem]{Definition}
\newtheorem{remark}[theorem]{Remark}
\def\qed{\hfill\rule{1ex}{1ex}\\}
\newenvironment{pf}{\noindent {\bf Proof.}}{\qed}
\title{The Caffarelli Alternative in Measure for the Nondivergence Form Elliptic Obstacle Problem with Principal Coefficients in VMO}
\author{Ivan Blank and Kubrom Teka}
\newcommand{\nc}[2]{ \newcommand{#1}{#2} }
\nc{\avint}{ {- \hspace{-3.5mm} \int} }  
\newcommand{\myavint}[1]{ \int_{#1} \mkern-27mu
                          \rule[.033 in]{.12 in}{.01 in} \ \ \  }
\newcommand{\myavinttwo}[1]{ \int_{#1} \mkern-35mu
                          \rule[.033 in]{.12 in}{.01 in} \ \ \  }
\nc{\R}{{\rm {I \! R}}}  
\nc{\N}{{\rm {I \! N}}}  
\newcommand{\closure}[1]{ \stackrel{\rule{.1 in}{.01 in}}{#1} }
\newcommand{\ohclosure}[1]{ \stackrel{\rule{.15 in}{.01 in}}{#1} }
\newcommand{\dclosure}[1]{ \stackrel{\rule{.2 in}{.01 in}}{#1} }
\newcommand{\tclosure}[1]{ \stackrel{\rule{.3 in}{.01 in}}{#1} }
\newcommand{\qclosure}[1]{ \stackrel{\rule{.4 in}{.01 in}}{#1} }
\newcommand{\pclosure}[1]{ \stackrel{\rule{.5 in}{.01 in}}{#1} }
\newcommand{\chisub}[1]{ {\mathbf{\chi}}_{_{#1}} }
\newcommand{\newsec}[2]{ \section{#1} \label{sec-#2}  
                         \setcounter{equation}{0}     
                         \setcounter{theorem}{0} }    
\newcommand{\refeqn}[1]{ (\!\!~\ref{eq:#1}) } 
\newcommand{\refthm}[1]{ (\!\!~\ref{#1}) }    
\nc{\Holder}{H\"{o}lder\ }
\nc{\ith}{ \ensuremath{\text{i}^{\text{th}}} }
\nc{\jth}{ \ensuremath{\text{j}^{\text{th}}} }
\nc{\kth}{ \ensuremath{\text{k}^{\text{th}}} }
\nc{\curl}{ \nabla \times }
\nc{\Div}{ \nabla \cdot }
\nc{\Ppl}{ \mathcal{M}^{+} }  \nc{\Pmn}{ \mathcal{M}^{-} }
\nc{\smiley}{ $\stackrel{\because}{\smile} \;$ }
\newcommand{\BVP}[4]{  
  \begin{equation}
        \begin{array}{rl}
           #1 & \ \text{in}
               \ \ #4 \vspace{.05in} \\
           #2 & \ \text{on} \ \ \partial #4 \;.
        \end{array}
  \label{eq:#3}
  \end{equation}    }
\newcommand{\BVPb}[3]{   \BVP{#1}{#2}{#3}{ B_{1} }  }
\newcommand{\BVPc}[4]{  
  \begin{equation}
        \begin{array}{rl}
           #1 & \ \text{in}
               \ \ #4 \vspace{.05in} \\
           #2 & \ \text{on} \ \ \partial #4 \;,
        \end{array}
  \label{eq:#3}
  \end{equation}    }
\newcommand{\BVPbc}[3]{   \BVPc{#1}{#2}{#3}{ B_{1} }  }
\newcommand{\BVPcbsn}[3]{   \BVPc{#1}{#2}{#3}{ B_{s_0} }  }
\begin{document}
\numberwithin{equation}{section}
\maketitle

\begin{abstract} \noindent
We study the obstacle problem with an elliptic operator in nondivergence form with principal coefficients in VMO.
We develop all of the basic theory of existence, uniqueness, optimal regularity, and nondegeneracy of the
solutions.  These results, in turn, allow us to begin the study of the regularity of the free boundary, and we
show existence of blowup limits, a basic measure stability result, and a measure-theoretic version of the
Caffarelli alternative proven in \cite{C1}.
\end{abstract}

\newsec{Introduction}{Intro}
We study strong solutions of the obstacle-type problem:
\begin{equation}
           Lw := a^{ij}D_{ij}w = \chisub{ \{ w > 0 \} } \ \ \text{in}
               \ \ B_1 \;,
\label{eq:BasicProb}
\end{equation}
where we look for $w \geq 0.$ (We use Einstein summation notation throughout the paper.)  A strong solution to a
second order partial differential equation is a twice
weakly differentiable function which satisfies the equation almost everywhere.  (See chapter 9 of \cite{GT}.)
We will assume that the matrix $\mathcal{A} = (a^{ij})$ is symmetric and strictly and uniformly elliptic, i.e.
\begin{equation}
  \mathcal{A} \equiv \mathcal{A}^{T} \ \ \text{and} \ \ 0 < \lambda I \leq \mathcal{A} \leq \Lambda I \;,
\label{eq:UniformEllip}
\end{equation}
or, in coordinates:
$$a^{ij} \equiv a^{ji} \ \ \text{and} \ \ 
    0 < \lambda |\xi|^2 \leq a^{ij} \xi_i \xi_j \leq \Lambda |\xi|^2 \ \ \text{for all} \ \xi \in \R^n, \ \xi \ne 0 \;.$$
Our motivations for studying this type of problem are primarily theoretical, although as observed in \cite{MPS} the mathematical
modeling of numerous physical and engineering phenomena can lead to elliptic problems with discontinuous coefficients.
The best specific example of which we are aware for a motivation to study nondivergence form equations with
discontinuous coefficients is the problem of determining the optimal stopping time in probability.

Although we do not want (or even need) any further assumptions for many of our results about the regularity of 
solutions to our obstacle problem, it turns out
that the question of existence of solutions will require us to assume some regularity of our $a^{ij}.$  In fact, there is an
important example due to C. Pucci (found in \cite{T}) which shows that the strict uniform ellipticity of the $a_{ij}$ (i.e.
Equation\refeqn{UniformEllip}\!) is in general not even enough to guarantee the existence of a solution to the corresponding
partial differential equation.  On the other hand, the space of vanishing mean oscillation (VMO) turns out to be a suitable setting
for existence results and a priori estimates as was shown in papers by Chiarenza, Frasca, and Longo
(see \cite{CFL1} and \cite{CFL2}), and it will also turn out to be an appropriate setting for getting some initial results about the
regularity of the free boundary.  (Without using the language of VMO, Caffarelli proved very similar results in \cite{C3}.  To
compare these results, chapter 7 of \cite{CC} and Remark 2.3 of \cite{W} are very helpful.)
It is worth noting that there are results due to Meyers which require a little bit less smoothness of the coefficients if one is content
to work in $L^p$ spaces with $p$ close to $2$ (see \cite{Me}), but in this case, one cannot use the Sobolev embedding to get
continuity of a first derivative except in dimension two.
In any case, we will assume that the $a^{ij}$ belong to VMO when proving existence, and again when we turn to study the
regularity of the free boundary.  It is also worth noting that elliptic and parabolic equations in both divergence and
nondivergence form with coefficients in VMO have received a fair amount of attention lately from Krylov and his coauthors.
(See in particular \cite{K} and \cite{DKL} and the references therein.)

After we prove a key lemma for compactness and a corollary which leads us to nontrivial blowup limits we establish the following
theorem (See Theorem\refthm{CAMS}in this paper) which is modeled after Caffarelli's main results in \cite{C1}:
\begin{theorem}[Caffarelli's Alternative in Measure (Strong Form)]
We assume that $w$ satisfies Equation\refeqn{BasicProb}\!\!, we assume that $a^{ij} \in \text{VMO}$ and satisfy
Equation\refeqn{UniformEllip}\!\!, and finally we assume that $0 \in \partial \{ w > 0 \}.$  Under these hypotheses, for any
$\epsilon \in (0, 1/8),$ there exists an $r_0 \in (0,1),$ and a $\tau \in (0,1)$
such that \newline
\indent if there exists a $t \leq r_0$ such that 
\begin{equation}
   \frac{|\Lambda(w) \cap B_t|}{|B_t|} \geq \epsilon \;,
\label{eq:bigonce}
\end{equation}
\indent then for all $r \leq \tau t$ we have
\begin{equation}
   \frac{|\Lambda(w) \cap B_r|}{|B_r|} \geq \frac{1}{2} - \epsilon \;.
\label{eq:bigallatime}
\end{equation}
The $r_0$ and the $\tau$ depend on $\epsilon$ and on the $a^{ij},$ but they do \textit{not} depend on the
function $w.$
\end{theorem}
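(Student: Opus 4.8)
The plan is to argue by contradiction using a compactness/blowup argument that reduces the VMO problem to the constant-coefficient obstacle problem, where the corresponding dichotomy is classical (this is the measure-theoretic shadow of Caffarelli's alternative from \cite{C1}). First I would fix $\epsilon \in (0,1/8)$ and suppose the conclusion fails for this $\epsilon$: then for every $r_0 = 1/k$ and every $\tau = 1/k$ there is a solution $w_k$ of \refeqn{BasicProb} with coefficients $a_k^{ij} \in \text{VMO}$ satisfying \refeqn{UniformEllip}, with $0 \in \partial\{w_k > 0\}$, a radius $t_k \le 1/k$ with $|\Lambda(w_k) \cap B_{t_k}|/|B_{t_k}| \ge \epsilon$, yet a radius $r_k \le t_k/k$ with $|\Lambda(w_k) \cap B_{r_k}|/|B_{r_k}| < 1/2 - \epsilon$. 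The natural move is to rescale at scale $t_k$: set $\tilde w_k(x) := w_k(t_k x)/t_k^2$, which again solves an obstacle problem of the same form with rescaled coefficients $\tilde a_k^{ij}(x) = a_k^{ij}(t_k x)$, still uniformly elliptic, and — crucially — whose VMO modulus only improves under this contraction, so along a subsequence the $\tilde a_k^{ij}$ converge (in the appropriate averaged sense) to a constant elliptic matrix $\mathcal{A}_0$. By the optimal $C^{1,1}$ regularity and the nondegeneracy established earlier in the paper, the $\tilde w_k$ are bounded in $C^{1,1}_{\text{loc}}$ and do not degenerate at the origin, so a subsequence converges in $C^1_{\text{loc}}$ to a nonzero blowup limit $w_0 \ge 0$ solving the constant-coefficient obstacle problem $a_0^{ij} D_{ij} w_0 = \chisub{\{w_0 > 0\}}$ with $0 \in \partial\{w_0 > 0\}$ — this is exactly the "nontrivial blowup limit" produced by the key lemma and corollary advertised just before the theorem.

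Next I would track the two density hypotheses through this limit. The condition $|\Lambda(w_k) \cap B_{t_k}|/|B_{t_k}| \ge \epsilon$ becomes, after rescaling, $|\Lambda(\tilde w_k) \cap B_1|/|B_1| \ge \epsilon$, and since the coincidence sets $\Lambda(\tilde w_k) = \{\tilde w_k = 0\} = \{\nabla \tilde w_k = 0, \ldots\}$ converge appropriately under $C^1$ convergence (using nondegeneracy to rule out measure escaping to the positivity set, i.e. that the limit contact set is not "thinner" than the lim sup of the contact sets), we get $|\Lambda(w_0) \cap B_1|/|B_1| \ge \epsilon > 0$. On the other hand $r_k/t_k \to 0$, so the second hypothesis says that at the scale $\rho_k := r_k/t_k \to 0$ we have $|\Lambda(\tilde w_k) \cap B_{\rho_k}|/|B_{\rho_k}| < 1/2 - \epsilon$; rescaling again at scale $\rho_k$ and passing to a further blowup (or directly inspecting the blowup of $w_0$ at $0$, using that $w_0$ already has a homogeneous-degree-two blowup) yields a second constant-coefficient solution $v_0$ with $0 \in \partial\{v_0 > 0\}$ and $|\Lambda(v_0) \cap B_1|/|B_1| \le 1/2 - \epsilon < 1/2$.

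The final step is the classical dichotomy for the constant-coefficient obstacle problem: a global solution with a free boundary point at the origin is either a half-space solution (a multiple of $((x\cdot e)^+)^2$ in suitable coordinates, whose contact set has density exactly $1/2$) or its contact set has density $0$ at that point in a quantitative way — in particular there is \emph{no} global constant-coefficient solution whose contact-set density at a free boundary point lies strictly between $0$ and $1/2$, and in fact once the density exceeds $0$ it must be at least $1/2$. Applying this to $w_0$: since the density of $\Lambda(w_0)$ at $0$ is at least $\epsilon > 0$ (pushing the ball radius to $0$ we keep a positive density along the degree-two blowup), $w_0$ must be a half-space solution, so $v_0$ is too, forcing the density of $\Lambda(v_0)$ at $0$ to equal $1/2$ — contradicting $|\Lambda(v_0) \cap B_1|/|B_1| \le 1/2 - \epsilon$. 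This contradiction proves the theorem.

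\textbf{Main obstacle.} The delicate point is not the abstract contradiction scheme but the two passages to the limit of the coincidence sets: showing that the Lebesgue density of $\Lambda$ behaves continuously enough under the blowup convergence. Upper semicontinuity of $\limsup \Lambda(\tilde w_k)$ is easy from $C^0$ convergence, but I need a lower bound — that contact measure cannot suddenly appear or disappear — and this is precisely where the \emph{nondegeneracy} estimate proved earlier is essential (it forces $\{\tilde w_k > 0\}$ to be "fat" wherever $\tilde w_k > 0$, so in the limit the positivity set does not collapse onto the contact set and vice versa). Making this quantitative and uniform in $k$, and simultaneously handling the fact that the second scale $\rho_k$ goes to zero (so one is really doing a blowup-of-a-blowup, or equivalently invoking the homogeneity of the degree-two rescaling limit of $w_0$), is the part that requires care; everything else is either the compactness lemma already granted, standard VMO-rescaling, or the well-known classification in the constant-coefficient case.
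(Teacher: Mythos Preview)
Your compactness/contradiction scheme is natural and in fact mirrors the paper's proof of the \emph{weak} form (Theorem\refthm{CAMW}\!), but as written it does not close for the strong form, precisely at the ``double-scale'' step you flag. After rescaling at $t_k$ and passing to the limit $w_0$, the convergence $\tilde w_k \to w_0$ is only locally uniform, so all information at the vanishing scale $\rho_k = r_k/t_k \to 0$ is lost. Your $v_0$ is obtained by rescaling the \emph{sequence} $\tilde w_k$ at $\rho_k$, not by blowing up $w_0$; there is no a priori link between $v_0$ and $w_0$ (they need not even arise from the same constant matrix $A^{ij}$). Hence the sentence ``$w_0$ must be a half-space solution, so $v_0$ is too'' is unjustified on two counts: $w_0$ need only be regular at $0$, not a half-space, and $v_0$ is not its blowup. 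The parenthetical ``pushing the ball radius to $0$ we keep a positive density'' is likewise unsupported --- density $\ge\epsilon$ at scale $1$ says nothing directly about the density at $0$.

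The paper sidesteps this by arguing \emph{directly} rather than by contradiction. It uses the measure stability result (Theorem\refthm{BMSR}\!) to compare $w$ on a small ball with the solution $u$ of the Laplacian obstacle problem sharing its boundary data, transfers the density hypothesis to $u$, applies Caffarelli's $C^{1,\alpha}$ regularity (Theorem\refthm{C1Al}\!) to $u$ to obtain a quantitative density improvement at a \emph{fixed} scale ratio $\gamma$, transfers back to $w$, and then iterates (using $\tfrac12-\epsilon>\epsilon$). The crucial structural point is that the improvement is established at a single fixed ratio of scales --- which survives a limit --- while the passage to \emph{all} smaller scales comes from iteration, not from a second blowup. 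Your argument can be repaired along the same lines: fix $\gamma_0$ from the constant-coefficient theory, run your compactness argument to prove a one-step improvement at ratio $\gamma_0$, then iterate. But the simultaneous two-scale blowup, as you wrote it, does not work.
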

\noindent
On the other hand, in the final section of the paper we show that
the VMO assumption is not, by itself, enough to ensure uniqueness of blowup limits at free boundary points.  Indeed, the
final theorem states:
\begin{theorem}[Counter-Example]
There exists $a^{ij} \in \text{VMO}(B_1)$ which satisfies Equation\refeqn{UniformEllip}with $\lambda = 2$ and
$\Lambda = 3,$ there exists a nonnegative solution $w(x)$ to Equation\refeqn{BasicProb}with this matrix $a^{ij},$
and there exists $\{r_n\} \downarrow 0$ such that 
$$\lim_{n \rightarrow \infty} w_{r_{2n + 1}}(x) = \frac{1}{4}((x_n - \beta)_{+})^{2}$$
and
$$\lim_{n \rightarrow \infty} w_{r_{2n}}(x) = \frac{1}{9}\tau(((x_n - \beta)_{+})^{2})$$
where $\tau$ is a rotation, and where the limits have the same convergence as in Theorem\refthm{EBL}(and where as
usual we let $w_{\epsilon}(x) := \epsilon^{-2}w(\epsilon x)$).
\end{theorem}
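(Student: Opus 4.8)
The plan is to construct the pair $(a^{ij},w)$ entirely by hand, exploiting that on $\{w>0\}$ the matrix $a^{ij}$ is constrained only by the single pointwise scalar identity $a^{ij}D_{ij}w=1$, whereas on $\{w=0\}$ it is essentially free. I would interpolate between the two half--space profiles of the statement through a family of tilted, rescaled half--space solutions $\tfrac{1}{2m}\big((\nu\cdot x)_+\big)^2$, with $(\nu,m)\in S^{n-1}\times(2,3)$, whose parameters vary \emph{logarithmically slowly} as $|x|\to 0$. That slowness is what will keep $a^{ij}$ in $\mathrm{VMO}$ (the model being $\sin\log\log(1/|x|)$), while simultaneously making the error it introduces into the equation small enough to be absorbed into the ellipticity freedom of $a^{ij}$; and the blow--ups along two well--chosen sequences $r_n\downarrow 0$ will then pick out the two profiles. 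Concretely: fix a bounded Lipschitz path $s\mapsto(\nu(s),m(s))$, $s\in[0,\infty)$, valued in $S^{n-1}\times(2,3)$, with $\liminf_s m(s)=2$ and $\limsup_s m(s)=3$ attained along sequences $s_k\to\infty$ and $s_k'\to\infty$ respectively, along which moreover $\nu(s_k)\to e_n$ and $\nu(s_k')\to\tau e_n$, and with the quantitative margin $\min\{m(s)-2,\,3-m(s)\}\ge e^{-s/2}$ for all $s$ (such paths are easy to write down, and the two limiting parameter pairs are arranged so the two blow-up sequences produce the profiles of the statement). Choosing a small $\rho_1>0$, put $\hat\nu(x)=\nu(s(x))$ and $\hat m(x)=m(s(x))$ with $s(x)=\log\log(1/|x|)$ for $0<|x|<\rho_1$, frozen as constants for $|x|\ge\rho_1$, and define
\[ w(x):=\frac{1}{2\hat m(x)}\big((\hat\nu(x)\cdot x)_+\big)^2,\qquad w(0):=0. \]

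Because the increment of $s(x)=\log\log(1/|x|)$ over $|x|\in[r/2,r]$ is $O(1/\log(1/r))$, the quantity $\varepsilon(\rho):=C_*/\log(1/\rho)\to 0$ controls all derivatives of $\hat\nu$ and $\hat m$ that occur, and a direct computation then gives: $w\ge 0$; $w\in C^{1,1}(B_1)$ with $\|D^2w\|_{L^\infty}\le C$ (the delicate estimates being $|w|\le C|x|^2$ and $|\nabla w|\le C|x|$ near $0$, where the axis $\hat\nu$ and the scaling $\hat m$ both vary); $\{w>0\}=\{\hat\nu(x)\cdot x>0\}$; $D^2w\equiv0$ on $\{\hat\nu\cdot x<0\}$; and on $\{w>0\}$
\[ D^2w=\frac{1}{\hat m}\,\nabla(\hat\nu\cdot x)\otimes\nabla(\hat\nu\cdot x)+E,\qquad |E|\le C\varepsilon(|x|). \]
Using $D_k\hat\nu\perp\hat\nu$ one even gets $\hat\nu\cdot\nabla(\hat\nu\cdot x)\equiv1$, hence $\operatorname{tr}D^2w=\tfrac1{\hat m}+O(\varepsilon)$ and $\hat\nu^{\,T}(D^2w)\hat\nu=\tfrac1{\hat m}+O(\varepsilon)$ on $\{w>0\}$. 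Finally, letting $\nu_\rho\in S^{n-1}$ denote the constant value of $\hat\nu$ on $\{|x|=\rho\}$, for every small $\rho$ the point $\rho\,\nu_\rho$ lies in $\{w>0\}$ and $-\rho\,\nu_\rho$ lies in $\{w=0\}$, so $0\in\partial\{w>0\}$ with $w(0)=0$.

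For the coefficients, set
\[ a_0(x):=\Big(2+\tfrac12e^{-s(x)/2}\Big)I+\Big(\hat m(x)-2-\tfrac12e^{-s(x)/2}\Big)\,\hat\nu(x)\otimes\hat\nu(x), \]
which is symmetric with eigenvalues $2+\tfrac12e^{-s/2}$ (multiplicity $n-1$) and $\hat m$, hence with spectrum in $(2,3)$ (using $\hat m-2\ge e^{-s/2}$). On $\{w>0\}$ the relations above give $\langle a_0,D^2w\rangle=1+O(\varepsilon)$, so one corrects by $F:=\tfrac{1-\langle a_0,D^2w\rangle}{|D^2w|^2}\,D^2w$, the least--norm symmetric solution of $\langle F,D^2w\rangle=1-\langle a_0,D^2w\rangle$; since $|D^2w|\ge c>0$ on $\{w>0\}$ we have $|F|\le C\varepsilon(|x|)\ll e^{-s/2}$ for small $|x|$, so $a:=a_0+F$ still has spectrum in $(2,3)$ and satisfies $a^{ij}D_{ij}w=1$ there. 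Setting $a:=a_0$ on $\{w\le0\}$, the resulting field is measurable and symmetric, obeys \eqref{eq:UniformEllip} with $\lambda=2$, $\Lambda=3$, and satisfies $a^{ij}D_{ij}w=\chi_{\{w>0\}}$ a.e.\ in $B_1$ (on $\{\hat\nu\cdot x<0\}$ both sides vanish and the interface is null). Moreover $a^{ij}\in\mathrm{VMO}(B_1)$: on $B_{\rho_1}$ it is a fixed Lipschitz function of $s(x)=\log\log(1/|x|)$ plus the term $F$, which tends to $0$ in $L^\infty$, so the classical slow--oscillation argument applies — on a ball of radius $r$ the concentric sub-ball of radius $\rho_0 r$ contributes $O(\rho_0^{\,n})$ to the mean oscillation and the rest contributes $O(1/\log(1/r))+o(1)$, so letting $\rho_0\to0$ then $r\to0$ makes the mean oscillation over small balls vanish.

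It then remains to read off the blow--ups: $w_r(x)=r^{-2}w(rx)=\tfrac{1}{2\hat m(rx)}\big((\hat\nu(rx)\cdot x)_+\big)^2$, and since $s(rx)=\log\log(1/r)+o(1)$ as $r\to0$ locally uniformly in $x\ne0$ and $(\nu,m)$ is Lipschitz, along any $r_j\downarrow0$ with $s(r_j)\to t_\infty$ one gets $w_{r_j}\to\tfrac{1}{2m(t_\infty)}\big((\nu(t_\infty)\cdot x)_+\big)^2$ locally uniformly in $\mathbb R^n$ (both sides $\le\tfrac14|x|^2$ near $0$), the uniform bound $\|D^2w_r\|_{L^\infty(B_R)}\le C$ — an instance of the optimal regularity developed earlier — upgrading this to the mode of convergence in Theorem~\ref{EBL}. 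Choosing $r_{2n+1}$ so that $s(r_{2n+1})$ runs along the subsequence on which $m\to2$ and $\nu\to e_n$ gives $w_{r_{2n+1}}\to\tfrac14\big((x_n)_+\big)^2$, the first profile of the statement (with $\beta=0$), and choosing $r_{2n}$ along the subsequence on which $m\to3$ and $\nu\to\tau e_n$ gives $w_{r_{2n}}\to$ the second, rotated profile of the statement; as these limits are distinct, the blow--up of $w$ at $0$ is not unique. The step I expect to be delicate is exactly the balancing just carried out — the coefficients must oscillate \emph{slowly} (to lie in $\mathrm{VMO}$ while still visiting both profiles) yet this very slowness has to dominate the second--order error $\varepsilon\sim1/\log(1/|x|)$ it creates. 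That is precisely why the path is kept strictly inside $(2,3)$ with the margin $e^{-s/2}$: this margin keeps the pointwise problem ``$a\in[2I,3I]$ and $\langle a,D^2w\rangle=1$'' solvable with room (indeed $\min_a\langle a,D^2w\rangle\le\tfrac2{\hat m}+O(\varepsilon)<1<\tfrac3{\hat m}+O(\varepsilon)\le\max_a\langle a,D^2w\rangle$) and prevents $F$ from pushing the spectrum of $a$ outside $[2,3]$; the $C^{1,1}$ bound for $w$ up to the origin, with both normal and scaling varying, is a secondary but routine check.
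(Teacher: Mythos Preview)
Your approach is genuinely different from the paper's. There the coefficient is a scalar radial matrix $a^{ij}=f(|x|)\delta^{ij}$ with $f$ oscillating between $2$ and $3$ via $\cos(\pi\log|\log r|)$; the solution $w$ is then produced \emph{abstractly} from the existence theorem with half-space boundary data $p_\beta$, a shooting argument in $\beta$ (together with the weak comparison principle and nondegeneracy) places $0$ on the free boundary, and the two distinct blowups are extracted from Theorem~\ref{EBL} along radii where the average of $f$ tends to $2$, respectively $3$. Your fully explicit construction of both $w$ and $a^{ij}$ is more elementary and self-contained, and it bypasses all of the existence/stability machinery, which is attractive.

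There is, however, a concrete gap in the VMO verification. You set $a=a_0+F$ on $\{w>0\}$ and $a=a_0$ on $\{w\le0\}$, so $a$ jumps by the one-sided boundary value of $F$ across the smooth hypersurface $\{\hat\nu(x)\cdot x=0\}\cap\{0<|x|<\rho_1\}$. At a point $x_0$ on that interface with $|x_0|=r_0$, the terms in $D^2w$ carrying a factor of $t$ vanish, but $\nabla t=\hat\nu-\nu'(s)/\log(1/r_0)$, and a direct computation gives
\[
\langle a_0,D^2w\rangle-1=\frac{2+\tfrac12 e^{-s/2}}{\hat m}\cdot\frac{|\nu'(s)|^2}{\log^2(1/r_0)}\,,
\]
which is strictly positive whenever $\nu'(s)\ne0$. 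Hence the one-sided limit of $F$ is nonzero there. A bounded function with a nonzero jump along a smooth hypersurface is in BMO but \emph{not} in VMO: for balls $B_\rho(x_0)$ straddling the interface, the mean oscillation stays comparable to the jump size regardless of how small $\rho$ is. Your sketch (``the concentric sub-ball of radius $\rho_0 r$ contributes $O(\rho_0^{\,n})$\ldots'') effectively tests only balls containing the origin and does not see this obstruction. The repair is easy but must be made: since $D^2w\equiv0$ on the interior of $\{w=0\}$, the equation imposes no constraint on $a$ there, so rather than resetting $a$ to $a_0$ you should extend $F$ \emph{continuously} across the interface (any small continuous extension suffices; similarly, the freeze at $|x|=\rho_1$ should be done with a smooth cutoff so that $D\hat\nu,D\hat m$ do not jump). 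Then $F$ is continuous on $\overline{B_1}$ with $F(0)=0$, hence uniformly continuous, and $a=a_0+F$ is the sum of a bounded radial function with $r\,\partial_r a_0\to0$ (in VMO by the criterion the paper records as Theorem~\ref{RVMOBram}) and a continuous perturbation, giving $a\in\mathrm{VMO}$ as required.
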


We turn to an outline of how the paper is laid out.
We start by giving definitions and background. Next, after showing the existence of nontrivial solutions when the $a^{ij}$
belong to VMO, we turn to some of the basic questions in the introductory theory of the obstacle problem.  Namely, we follow
Caffarelli's treatment (see \cite{C4} and \cite{B}), and show nondegeneracy and optimal regularity of the solutions.  Once we
have these tools, we turn to a study of the free boundary regularity and some of our main results.  We start with a technical
lemma which gives us a compactness property of solutions to the obstacle problems that we are studying.  In spite of
numerous hypotheses, we use this lemma twice in fundamental ways.  The first time, we use it to show a measure stability
property of our solutions.  Namely, if our matrix of coefficients $a^{ij}$ is sufficiently close to the identity matrix, then the
solution to our nondivergence form problem will have a zero set which is very close (in Lebesgue measure) to the zero set
of the solution to a new obstacle problem with the same boundary data, but where we replace our general nondivergence
form operator with the Laplacian.  The second time we use our compactness lemma, we prove that if our $a^{ij}$ belong to
VMO and our solution has $0$ in its free boundary, then we can find a sequence of quadratic rescalings: 
$w_{\epsilon}(x) := \epsilon^{-2}w(\epsilon x),$ which converge to a solution of an obstacle problem with constant
coefficients on all of $\R^n.$  These results in turn are then used to first show a measure theoretic version of Caffarelli's
Alternative and second they are used in a construction of a solution with two different blow up limits at the origin.  Our
measure theoretic Caffarelli Alternative is the first theorem that we stated above, and it shows that at every point of the
free boundary the density of the zero set will be well defined and will be either $0$ or $1/2.$
Indeed, just as in the obstacle problem for the Laplacian, the so-called
``regular points'' (which for us are the points where the density of the zero set is $1/2$) form an open subset of the
free boundary.  On the other hand, in the final section, since the averages of VMO functions do not need to converge,
by rescaling along different radii where the limits of the averages converge to different numbers, we show the
existence of the counter-example described in the second theorem stated above.



\newsec{Notation, Conventions, and Background}{NCB}

We will use the following basic notation throughout the paper:
$$
\begin{array}{lll}
\chisub{D} & \ & \text{the chacteristic function of the set} \ D \\
\closure{D} & \ & \text{the closure of the set} \ D \\
\partial D & \ & \text{the boundary of the set} \ D \\
D_{\epsilon} & \ & \text{all} \ x \ \text{such that dist}(x,D) < \epsilon \\ 
x   & \ & (x_1, x_2, \ldots, x_n) \\
x^{\prime} & \ &(x_1, x_2, \ldots, x_{n-1}, 0) \\
B_{r}(x) & \ & \text{the open ball with radius} \ r \ \text{centered at the point} \ x \\
B_{r}  & \ & B_{r}(0) \\
\end{array}
$$
For Sobolev spaces and \Holder spaces, we will follow the conventions found within Gilbarg and Trudinger's book.
In particular for $1 \leq p \leq \infty, \; W^{k,p}(\Omega)$ will denote the Banach space of functions which are
$k$ times weakly differentiable, and whose derivatives of order $k$ and below belong to $L^p(\Omega),$ and
for $0 < \alpha \leq 1, \; C^{k,\alpha}(\ohclosure{\Omega})$ will denote the Banach space of functions which
are $k$ times differentiable on $\ohclosure{\Omega}$ and whose \kth derivatives are uniformly
$\alpha$-\Holder continuous.  (See \cite{GT} for more details.)

When we are studying free boundary regularity, we will frequently assume \\
\begin{equation}
      0 \in \partial \{w > 0\} \;,
\label{eq:OinFB}
\end{equation}
for convenience.
We will make use of the following terminology.  We define:
\begin{equation}
\begin{array}{l}
\displaystyle{\Omega(w) := \{ w > 0 \},} \\
\displaystyle{\Lambda(w) := \{ w = 0\},} \ \ \text{and} \\
\displaystyle{FB(w) := \partial \Omega(w) \cap \partial \Lambda(w) \;.}
\end{array}
\label{eq:BasicSetDefs}
\end{equation}
We will omit the dependence on $w$ when it is clear.  Note also that ``$\Lambda$'' and ``$\Delta$'' each have double duty
and it is necessary to interpret them based on their context.  We use ``$\Lambda$'' for both the zero set and for one of the
constants of ellipticity, and we use ``$\Delta$'' for the both the Laplacian of a function and for the symmetric difference of two
sets in $\R^n.$  (If $A, B \subset \R^n,$ then $A \Delta B := \{ A \setminus B \} \cup \{ B \setminus A \}.$)

We will also be using the BMO and the VMO spaces frequently, and we gather the relevent
definitions here.  (See \cite{MPS}.)  For an integrable function $f$ on a set $S \subset \R^n$ we let
$$f_{S} :=\int_{S} \mkern-21mu
                          \rule[.033 in]{.12 in}{.01 in} \ \ \  f \;.$$
\begin{definition}[BMO and BMO norm]
If $f \in L^1_{loc}(\R^n),$ and
\begin{equation}
    ||f||_{\ast} := \sup_{B} \frac{1}{|B|} \int_{B} |f(x) - f_{B}| \; dx
\label{eq:BMOdef}
\end{equation}
is finite, 
then $f$ is in the space of bounded mean oscillation, or ``$f \in \text{BMO}(\R^n).$''
We will take $||\cdot||_{\ast}$ as our BMO norm.
\end{definition}

\begin{definition}[VMO and VMO-modulus]  
Next, for $f \in \text{BMO},$ we define
\begin{equation}
    \eta_{f}(r) : = \sup_{\rho \leq r, \; y \in \R^n} \ \frac{1}{|B_{\rho}|} \int_{B_{\rho}(y)} |f(x) - f_{_{B_{\rho}(y)}}| \; dx \;,
\label{eq:etadef}
\end{equation}  
and if $\eta_{f}(r) \rightarrow 0$ as $r \rightarrow 0,$ then we say that $f$ belongs to the space of vanishing mean oscillation, or
``$f \in \text{VMO}.$''
$\eta_{f}(r)$ is referred to as the VMO-modulus of the function $f.$
\end{definition}

Since we will need it later, it seems worthwhile to collect some of Caffarelli's results here for the convenience of the reader.
These results can be found in \cite{C1} and \cite{C4}.  We start with a definition which will allow us to measure the
``flatness'' of a set.

\begin{definition}[Minimum Diameter]  \label{MinDiam}
  Given a set $S \in \R^n,$ we define the minimum diameter
  of $S$ (or $m.d.(S)$ ) to be the infimum among the
  distances between pairs of parallel hyperplanes enclosing
  $S.$
\end{definition}

\begin{theorem}[Caffarelli's Alternative]  \label{CaffAlt}
Assume $\gamma$ is a positive number, $w \geq 0,$ and 
$$\Delta w = \gamma\chisub{ \{ w > 0 \} } \ \text{in} \ B_1 \ \ \ \text{and} \ \ \ 0 \in FB(w) \;.$$
There exists a modulus of continuity $\sigma(\rho)$
depending only on $n$ such that either
  \begin{itemize}
    \item[a.]  $0$ is called a \textit{Singular Point} of $FB(w)$ in which case \newline
               $m.d.( \Lambda \cap B_{\rho} ) \leq 
               \rho \sigma(\rho),$  for all $ \rho \leq 1,$ or
    \item[b.]  $0$ is called a \textit{Regular Point} of $FB(w)$ in which case \newline
               there exists a $\rho_0$ such that 
               $m.d.( \Lambda \cap B_{\rho_0} ) \geq
               \rho_0  \sigma(\rho_0),$ and for all $\rho <
               \rho_0, \; m.d.( \Lambda \cap B_{\rho} ) \geq
               C\rho \sigma(\rho_0).$
  \end{itemize}
Furthermore, in the case that $0$ is regular, there exists a
$\rho_1$ such that for any $x \in B_{\rho_1} \cap 
\partial \Omega(w),$ and any $\rho < 2\rho_1,$ we have
  \begin{equation}
  m.d.(\Lambda \cap B_{\rho}(x) ) \geq C\rho \sigma(2\rho_1).
  \label{eq:MDReg}
  \end{equation}
\end{theorem}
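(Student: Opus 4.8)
Theorem~\ref{CaffAlt} packages classical results of Caffarelli (\cite{C1}, \cite{C4}), so instead of a full proof I will sketch the argument I would run, since the same machinery --- compactness, blowups, and classification of global solutions --- is what the later sections of this paper must reconstruct in the VMO setting. I take as known the optimal $C^{1,1}$ regularity of $w$ and the nondegeneracy estimate $\sup_{B_r(x)} w \geq c\, r^{2}$ for $x \in \overline{\Omega(w)}$; both are classical for the Laplacian obstacle problem (and their analogues for $L$ are proved elsewhere in this paper).

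\emph{Blowups and classification.} Since $w$ is $C^{1,1}$ with quadratic growth off $FB(w)$ and is nondegenerate, the rescalings $w_\rho(x) := \rho^{-2} w(\rho x)$ are bounded in $C^{1,1}_{loc}(\R^n)$; along any $\rho_j \downarrow 0$ a subsequence converges in $C^{1,\alpha}_{loc}$ to a \emph{global solution} $w_0 \geq 0$ of $\Delta w_0 = \gamma \chisub{\{w_0 > 0\}}$ on $\R^n$ with $0 \in FB(w_0)$, and nondegeneracy forces $w_0 \not\equiv 0$. A monotonicity formula of Weiss / Alt--Caffarelli--Friedman type, together with the convexity argument of \cite{C1}, then shows that $w_0$ is $2$-homogeneous and is of exactly one of two types: a \emph{half-space solution} $\tfrac{\gamma}{2}\big((x - x_0)\cdot e\big)_{+}^{2}$, whose zero set is a half-space and is therefore ``thick'' ($m.d.(\Lambda(w_0)\cap B_1)\geq 1$); or a \emph{polynomial solution} $\tfrac12\, x^{T} M x$ with $M = M^{T}\geq 0$ and $\operatorname{tr} M = \gamma$, whose zero set is the subspace $\ker M$ of dimension at most $n-1$ and is therefore ``thin'' ($m.d.(\Lambda(w_0)\cap B_1) = 0$).

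\emph{The dichotomy.} Write $\theta(\rho) := \rho^{-1}\, m.d.(\Lambda(w)\cap B_\rho)$; call $0$ \emph{singular} if $\theta(\rho)\to 0$ as $\rho\to 0$ and \emph{regular} otherwise. If $0$ is singular, a contradiction-and-compactness argument built on the classification produces a modulus $\sigma$, depending only on $n$, with $\theta(\rho)\leq\sigma(\rho)$ for all $\rho\leq 1$: solutions that are singular at the origin yet violate such a bound at radii $\rho_k\downarrow 0$ would, rescaled at scale $\rho_k$, converge to a global solution with $m.d.(\Lambda(w_0)\cap B_1)>0$, i.e.\ to a half-space solution, contradicting singularity. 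If $0$ is regular, choose $\rho_j\downarrow 0$ along which $\theta$ stays bounded below; using nondegeneracy and the $C^{1,1}$ bound (so that $\Lambda(w_{\rho_j})\cap B_1\to\Lambda(w_0)\cap B_1$ in Hausdorff distance), the blowup along these radii has a thick zero set and is hence a half-space solution. Once a single blowup at $0$ is a half-space solution, Caffarelli's directional-monotonicity argument (or, in the modern approach, Weiss's epiperimetric inequality) shows that on some ball $B_{\rho_0}$ the positivity set $\Omega(w)$ is monotone along a cone of directions, hence a Lipschitz --- in fact $C^{1,\alpha}$ --- domain; consequently $m.d.(\Lambda(w)\cap B_\rho)\geq c\,\rho$ for all $\rho<\rho_0$. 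Shrinking $\rho_0$ until also $\theta(\rho_0)\geq\sigma(\rho_0)$ and $c\geq C\,\sigma(\rho_0)$ then yields both inequalities of case (b).

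\emph{Openness (the ``Furthermore'' clause).} This now falls out of the regular case: if $0$ is regular then $\partial\Omega(w)$ is, with \emph{uniform} constants, a $C^{1,\alpha}$ graph throughout some ball $B_{\rho_1}$, so for every $x\in B_{\rho_1}\cap\partial\Omega(w)$ and every $\rho<2\rho_1$ the same graph description holds in $B_\rho(x)$, and a $C^{1,\alpha}$ free-boundary graph forces $m.d.(\Lambda(w)\cap B_\rho(x))\geq c\,\rho$; this implies \refeqn{MDReg} as soon as $\rho_1$ is small enough that $c\geq C\,\sigma(2\rho_1)$. The genuinely hard steps are the classification of $2$-homogeneous global solutions and the passage from ``a single half-space blowup'' to ``$C^{1,\alpha}$ free boundary'' at a regular point; the remainder is compactness together with a careful accounting of which constants are allowed to depend on what.
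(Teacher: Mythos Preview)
The paper does not prove Theorem~\ref{CaffAlt}; it is stated in the background section as a classical result of Caffarelli, with the sentence ``These results can be found in \cite{C1} and \cite{C4}'' standing in for a proof. So there is no argument in the paper to compare your proposal against, and your opening sentence correctly identifies the theorem as imported rather than proved here.

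Your sketch is a reasonable modern outline of why the alternative holds. One small historical remark: the original argument in \cite{C1} (1977) predates both the Alt--Caffarelli--Friedman and Weiss monotonicity formulas, so Caffarelli's actual route to the classification of blowups and the dichotomy was somewhat different from the one you describe; the epiperimetric/Weiss approach you invoke is the streamlined version from \cite{C4} and later. This doesn't affect correctness, but if you want your sketch to match the cited sources you might flag which steps come from \cite{C1} and which from the ``revisited'' treatment in \cite{C4}.
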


\noindent So the set of regular points is an open
subset of the free boundary, and at any singular
point the zero set must become ``cusp-like.''
Examples of solutions with singular points exist and can
be found in \cite{KN}, and in \cite{C4} Caffarelli
has shown that these singular points must lie in a
$C^1$ manifold.  In our setting it will be more suitable to define regular and singular points in a more measure-theoretic fashion,
but for the rest of this section, we mean ``regular'' and ``singular'' in the sense given in Caffarelli's theorem.

\begin{theorem}[Behavior Near a Regular Point]  \label{RegPts}
Suppose that $w$ satisfies the assumptions of Theorem\refthm{CaffAlt}but with the domain $B_1$ replaced
with the domain $B_M$, and suppose $0$ is a regular point of $FB(w).$

Given $\rho > 0,$ there exists
an $\epsilon = \epsilon(\rho)$ and an $M = M(\rho),$ such that if
$m.d.(\Lambda(w) \cap B_1) > 2n\rho,$ then in an appropriate
system of coordinates the following are satisfied for any
$x$ such that $|x'| < \rho / 16$ and $-1 < x_n < 1,$ and for
any unit vector $\tau$ with $\tau_n > 0$ and 
$|\tau'| \leq \rho / 16:$
 \begin{itemize}
   \item[a.] $D_{\tau}w \geq 0 \;.$
   \item[b.] All level surfaces $\{ w = c \}, \; c > 0,$ are
             Lipschitz graphs: $$ x_n = f(x',c) \ \ \text{with}
             \ \  ||f||_{_{Lip}} \leq \frac{C(n)}{\rho} \; .$$
   \item[c.] $D_{e_n}w(x) \geq C(\rho)d(x,\Lambda) \;.$
   \item[d.] For $|\tau'| \leq \rho / 32, \ D_{\tau}w
             \geq C(\rho)d(x,\Lambda) \;.$
 \end{itemize}
\end{theorem}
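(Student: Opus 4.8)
The plan is to funnel the geometric hypotheses into one analytic fact --- that, in suitable coordinates, $w$ is $C^{1,1}$-close on a fixed ball to the half-space solution $P(x) := \tfrac{\gamma}{2}((x_n)_+)^2$, with error controlled by $\epsilon(\rho)$ --- and then to read off (a)--(d) from properties of the harmonic functions $D_\tau w$. All the occurrences of $\rho$ trace back to the fact that the direction $e_n$ of the approximating half-space is pinned down only up to an error comparable to $\epsilon(\rho)$.

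To obtain the closeness I would argue by compactness. Solutions of $\Delta w = \gamma \chisub{\{w>0\}}$ satisfy the optimal $C^{1,1}_{loc}$ bound with constant depending only on $n$ and $\gamma$, and are nondegenerate: $\sup_{B_r(x_0)} w \geq c(n,\gamma) r^2$ whenever $x_0 \in \overline{\Omega(w)}$ and $B_r(x_0) \subset B_M$. If the closeness failed there would be solutions $w_k$ on $B_{M_k}$ with $M_k \to \infty$, each with $0 \in FB(w_k)$ regular and $m.d.(\Lambda(w_k) \cap B_1) > 2n\rho$, yet staying at $C^{1,1}(B_2)$-distance $\geq \epsilon_0 > 0$ from every half-space solution. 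A uniform bound and compactness produce a subsequence with $w_k \to w_\infty$ in $C^{1,\alpha}_{loc}(\R^n)$, where $w_\infty \geq 0$ solves the equation on all of $\R^n$; uniform nondegeneracy passes to the limit, so $0 \in FB(w_\infty)$ and (minimum diameter being continuous under the resulting Hausdorff convergence of coincidence sets) $m.d.(\Lambda(w_\infty) \cap B_1) \geq 2n\rho > 0$. Applying Theorem\refthm{CaffAlt} to the dilations $x \mapsto R^{-2} w_\infty(Rx)$ and letting $R \to \infty$ rules out $0$ being singular for $w_\infty$ --- that would force $m.d.(\Lambda(w_\infty) \cap B_1) = 0$ --- so $0$ is regular; then the blowup of $w_\infty$ at $0$ is a homogeneous degree-two global solution with $0$ in its free boundary and with strictly positive minimum diameter of its coincidence set at unit scale, hence (by the classification of homogeneous degree-two solutions into half-space solutions and nonnegative quadratic polynomials, the latter having lower-dimensional coincidence set) is a half-space solution, and a Liouville-type argument then forces $w_\infty$ itself to be a half-space solution --- contradicting the distance $\epsilon_0$. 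This is the analytic heart of the theorem; the rest is comparison.

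Rotate so that the approximating half-space solution is $P(x) = \tfrac{\gamma}{2}((x_n)_+)^2$. For part (a), fix a unit vector $\tau$ and set $v := D_\tau w$. Differentiating the equation, $v$ is harmonic in the open set $\Omega(w)$; and since $w \in C^{1,1}$ with $w \geq 0$, $\nabla w$ vanishes on $FB(w) = \partial\Omega(w) \cap B_2$, so $v$ extends continuously with $v = 0$ there. On the remaining boundary $\partial B_{3/2} \cap \Omega(w)$ the closeness gives $\nabla w = \gamma (x_n)_+ e_n + O(\epsilon)$, so $v \geq 0$ there except on the thin collar $\{|x_n| \lesssim \sqrt{\epsilon}\}$; on that collar one forces $v \geq 0$ by comparison with a barrier of linear growth in $x_n$ (equivalently, by Caffarelli's self-improvement of an approximate cone of monotonicity), and the maximum principle then yields $v \geq 0$ throughout $\Omega(w) \cap B_{3/2}$. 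Carrying this out for every unit $\tau$ with $\tau_n > 0$ and $|\tau'| \leq \rho/16$ --- the half-opening permitted by the $O(\epsilon(\rho))$ uncertainty in $e_n$ --- gives the cone of monotonicity in (a).

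Parts (b), (c), (d) are then bookkeeping with the cone of monotonicity $\Gamma := \{\tau_n > 0,\ |\tau'| \leq \rho/16\}$. Since $D_{e_n + s\tau'} w \geq 0$ and $D_{e_n - s\tau'} w \geq 0$ for unit $\tau'$ and $|s| \leq \rho/16$, we get $|D_{\tau'} w| \leq (16/\rho) D_{e_n} w$ for every horizontal unit vector $\tau'$. For (b): each level set $\{w = c\}$, $c > 0$, is monotone in every direction of $\Gamma$, hence is a graph $x_n = f(x',c)$, and $\nabla' f = -\nabla' w / D_{e_n} w$ combined with the previous bound gives $\|f\|_{Lip} \leq C(n)/\rho$. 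For (c): $\{w > 0\}$ is locally a Lipschitz domain with constant $\leq C/\rho$ and $D_{e_n} w \geq 0$ is harmonic there, vanishing on the free-boundary part of its boundary, so the boundary Harnack principle (comparison with $d(\cdot, \Lambda(w))$) gives $D_{e_n} w(x) \gtrsim_\rho w(x)/d(x, \Lambda(w))$, and with the nondegeneracy estimate $w(x) \gtrsim_\rho d(x, \Lambda(w))^2$ valid near a regular point this becomes $D_{e_n} w(x) \geq C(\rho)\, d(x, \Lambda(w))$. For (d): if $|\tau'| \leq \rho/32$ then $|D_{\tau'} w| \leq \tfrac{1}{2} D_{e_n} w$ by the bound above, so $D_\tau w = \tau_n D_{e_n} w + D_{\tau'} w \geq \tfrac{1}{4} D_{e_n} w \geq C(\rho)\, d(x, \Lambda(w))$. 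The single genuinely hard ingredient is the one in the second paragraph --- the quantitative classification of global solutions with a thick coincidence set as half-space solutions, which is the analytic substance of Caffarelli's alternative; once it is granted, the theorem reduces to the maximum principle, Harnack's inequality, and elementary geometry.
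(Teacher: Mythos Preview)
This theorem is not proved in the paper at all: it appears in Section~2 among the background results ``collected here for the convenience of the reader'' and is attributed to Caffarelli with citations to \cite{C1} and \cite{C4}. There is thus no proof in the paper to compare your proposal against.

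For what it is worth, your sketch does track the classical Caffarelli argument fairly closely: a compactness-to-flatness step that brings $w$ $C^{1,1}$-close to a half-space solution, then directional monotonicity of $D_\tau w$ via its harmonicity in $\Omega(w)$ and the maximum principle, then the cone of monotonicity yielding Lipschitz level-set graphs, and finally Harnack-type control on $D_{e_n}w$ to get the linear-in-distance lower bounds. A couple of steps you wave at---the barrier handling of the thin collar $\{|x_n|\lesssim\sqrt{\epsilon}\}$ in part (a), and the Liouville classification forcing $w_\infty$ itself (not just its blowup) to be a half-space solution---are exactly where the real work lies in Caffarelli's papers, so if you intend this as a self-contained proof rather than a road map you would need to flesh those out. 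But as an outline it is faithful to the original.
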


\begin{theorem}[$C^{1,\alpha}$ Regularity of Regular Points]  \label{C1Al}
Suppose that $w$ satisfies the assumptions of Theorem\refthm{CaffAlt}\!\!, and suppose $0$ is a
regular point of $FB(w).$  There exists a universal modulus of continuity $\sigma(\rho)$ such
that if for one value of $\rho,$ say $\rho_0,$ we have
$$m.d.( \Lambda \cap B_{\rho_0} ) > \rho_0 \sigma(\rho_0),$$
then in a $\rho_0^2$ neighborhood of the origin, the free
boundary is a $C^{1,\alpha}$ surface $x_n = f(x')$ with
\begin{equation}
   ||f||_{_{C^{1,\alpha}}} \leq \frac{C(n)}{\rho_0} \;.
\label{eq:C1AlReg}
\end{equation}
\end{theorem}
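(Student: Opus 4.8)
The plan is to upgrade the conclusions of Theorem~\ref{RegPts} from Lipschitz to $C^{1,\alpha}$ regularity of the free boundary by a boundary Harnack argument applied to directional derivatives of $w$; I outline the structure rather than reproduce all of Caffarelli's computations. First I would rescale, replacing $w(x)$ by $\rho_0^{-2}w(\rho_0 x)$, so that the hypothesis $m.d.(\Lambda\cap B_{\rho_0})>\rho_0\sigma(\rho_0)$ becomes a hypothesis at unit scale; Theorem~\ref{CaffAlt}(b) guarantees that $m.d.(\Lambda\cap B_\rho)$ stays comparable to $\rho\,\sigma(\rho_0)$ at every smaller scale, which is precisely what puts us in the regime of Theorem~\ref{RegPts}. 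Invoking that theorem, there is a coordinate system and a radius $r$ (comparable, after undoing the rescaling, to $\rho_0^2$) so that inside $B_r$: $w$ is monotone nondecreasing along every direction of a cone $\mathcal{C}$ about $e_n$ of opening comparable to $\rho_0$; $\Omega(w)\cap B_r=\{x_n>f(x')\}\cap B_r$ for a Lipschitz $f$ with $\|f\|_{\mathrm{Lip}}\le C(n)/\rho_0$; and $D_{e_n}w(x)\ge C(\rho_0)\,d(x,\Lambda)$ on $\Omega(w)\cap B_r$. I would then record two elementary facts that make the scheme run: since $w$ has the optimal $C^{1,1}$ regularity and $w\ge 0$, we have $\nabla w\equiv 0$ on $FB$, so every $D_\tau w$ vanishes continuously on $FB\cap B_r$; and since $\Delta w=\gamma$ is constant on the open set $\Omega(w)$, every $D_\tau w$ is harmonic there.

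\textbf{Boundary Harnack for directional derivatives.} For each $i\in\{1,\dots,n-1\}$ I would pick $\delta$ comparable to $\rho_0$ so that the unit vectors $\tau_i^{\pm}$ pointing in the directions $e_n\pm\delta e_i$ lie in $\mathcal{C}$, and set $v_0:=D_{e_n}w$ and $v_i^{\pm}:=D_{\tau_i^{\pm}}w$. By the previous step these are nonnegative, harmonic in the Lipschitz domain $\Omega(w)\cap B_r$, vanish on the boundary portion $FB\cap B_r$, and $v_0>0$ in the interior. The boundary Harnack principle in Lipschitz domains --- with constants depending only on $n$ and the Lipschitz character, hence only on $n$ and $\rho_0$ --- then gives $v_i^{\pm}/v_0\in C^{0,\alpha}(\overline{\Omega(w)}\cap B_{r/2})$ with $\alpha=\alpha(n,\rho_0)$ and norm at most $C(n,\rho_0)$. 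Since $v_i^{\pm}/v_0$ is an affine function of $D_iw/D_nw$, this shows that $D_iw/D_nw$ has a H\"older-continuous extension up to $FB\cap B_{r/2}$.

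\textbf{From level sets to the free boundary, and bookkeeping.} For $c>0$ the level set $\{w=c\}\cap B_{r/4}$ is a smooth hypersurface by interior elliptic regularity (one is away from $\{\nabla w=0\}$, since $D_nw>0$ there), hence by the implicit function theorem a graph $x_n=f(x',c)$ with $\partial_{x_i}f(\cdot,c)=-(D_iw/D_nw)|_{x_n=f(x',c)}$. Monotonicity in the $e_n$-direction makes $c\mapsto f(\cdot,c)$ nondecreasing; nondegeneracy and continuity force $f(\cdot,c)\to f$ uniformly and $\{w=c\}\to FB$ in Hausdorff distance as $c\downarrow0$; and the uniform H\"older bound of the previous step then forces $\nabla_{x'}f(\cdot,c)$ to converge uniformly to the H\"older trace $-(D_iw/D_nw)|_{x_n=f(x')}$. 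Thus $f\in C^{1,\alpha}$ near the origin. Undoing the normalization and tracking how the cone opening and the Lipschitz/boundary-Harnack constants depend on $\rho_0$ (through $\sigma$) yields the $\rho_0^2$-neighborhood and the bound $\|f\|_{C^{1,\alpha}}\le C(n)/\rho_0$ asserted in the statement.

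I expect the boundary Harnack step to be the main obstacle: one needs the principle with constants that are \emph{uniform} over all admissible $w$ --- depending only on $n$ and $\rho_0$ and not on the particular solution --- which is exactly why the explicit Lipschitz bound $C(n)/\rho_0$ coming out of Theorem~\ref{RegPts} is indispensable. An alternative that stays closer to Caffarelli's original treatment and avoids citing boundary Harnack is to show directly, by comparison with explicit harmonic barriers, that the aperture of the monotonicity cone opens up geometrically as one passes to dyadically smaller balls, with the cone axes forming a Cauchy sequence whose increments are $C^{0,\alpha}$-summable; this again delivers the $C^{1,\alpha}$ bound for the graph of $FB$.
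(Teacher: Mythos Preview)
Your outline is a faithful sketch of Caffarelli's original argument: rescale so that the minimum-diameter hypothesis sits at unit scale, feed this into Theorem~\ref{RegPts} to get a monotonicity cone and a Lipschitz free boundary, observe that the directional derivatives $D_\tau w$ are nonnegative harmonic functions on the Lipschitz domain $\Omega(w)$ vanishing on $FB(w)$, and then invoke the boundary Harnack principle to make the ratios $D_i w/D_n w$ H\"older up to the boundary, which is exactly the $C^{1,\alpha}$ regularity of $FB$. Your alternative (improving the cone aperture dyadically by barrier comparison) is also a known route to the same conclusion.

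However, there is nothing to compare against here: the paper does \emph{not} supply its own proof of Theorem~\ref{C1Al}. This theorem is stated in the background section as one of Caffarelli's results, with the explicit remark that ``these results can be found in \cite{C1} and \cite{C4}.'' It is quoted, not proved, and is later used as a black box (for instance inside the proof of Theorem~\ref{CAMS}). So your proposal is not an alternative to the paper's argument; it is a summary of the very argument the paper is citing.
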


\begin{remark}  \label{rmk:GoodScale}
Note that by the last theorem, the $C^{1,\alpha}$
norm of the free boundary will decay in a universal
way at any regular point under the standard quadratic
rescaling if we are allowed to rotate the coordinates.
\end{remark}

Finally, there are two results due to Chiarenza, Frasca, and Longo which will be of fundamental importance throughout this work,
so we will state them here.  These results can be found in \cite{CFL1} and \cite{CFL2}.

\begin{theorem}[Interior Regularity (Taken from Theorem 4.2 of \cite{CFL1})]  \label{CFLIR}
Let $D \subset \R^n$ be open, let $p \in (1, \infty),$
assume $a^{ij}\in \text{VMO}(D)$ and satisfies Equation\refeqn{UniformEllip}\!\!, and let
$$Lu := a^{ij}D_{ij}u $$
for all $x \in D.$ Assume finally that $D^{\prime \prime} \subset \subset D^{\prime} \subset \subset D.$
Then there exists a constant $C$ such that 
\begin{equation}
   ||u||_{W^{2,p}(D^{\prime \prime})} \leq C(||u||_{L^p(D^{\prime})} + ||Lu||_{L^p(D^{\prime})}) \;.
\label{eq:CFL91est}
\end{equation}
The constant $C$ depends on $n, \lambda, \Lambda, p, dist(\partial D^{\prime \prime}, D^{\prime}),$ and quantities which
depend only on the $a^{ij}.$  (In particular, $C$ depends on the VMO-modulus of the $a^{ij}.$)
\end{theorem}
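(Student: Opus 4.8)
The statement is the classical interior $W^{2,p}$ estimate of Chiarenza, Frasca and Longo (see \cite{CFL1}, \cite{CFL2}), so I will only sketch the architecture of the argument; throughout one works with $u \in W^{2,p}_{loc}(D)$ so that $Lu$ is meaningful, and the content is the a priori bound \refeqn{CFL91est}. The plan is to represent the second derivatives of $u$ as singular integrals and then to use that a VMO function has arbitrarily small local mean oscillation at small scales. First I would recall the Calder\'on--Zygmund representation formula (in the variable-coefficient form of Fabes--Rivi\`ere): if $u$ has support in a ball $B$ and we freeze the coefficients at $x_0 \in B$, writing $L_0 = a^{ij}(x_0)D_{ij}$, then $D_{ij}u$ equals a principal-value singular integral against the second derivatives $\Gamma_{ij}(x_0, \cdot)$ of the fundamental solution of $L_0$, plus a pointwise multiple of $L_0 u$. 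Since $Lu = L_0 u + (a^{kl}(x_0) - a^{kl}(\cdot))D_{kl}u$, letting $x_0$ vary gives a representation of $D_{ij}u(x)$ as a sum of: (i) a singular integral operator $T_{ij}$ with variable kernel applied to $Lu$; (ii) a bounded pointwise multiple of $Lu$; and (iii) terms $\mathrm{p.v.}\int \Gamma_{ij}(x,x-y)\,[a^{kl}(x) - a^{kl}(y)]\,D_{kl}u(y)\,dy = [a^{kl}, T_{ij}](D_{kl}u)(x)$, i.e.\ commutators of the $T_{ij}$ with the coefficients.

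The kernels $\Gamma_{ij}(x_0,z)$ are homogeneous of degree $-n$ in $z$ with mean zero on spheres, and uniform ellipticity \refeqn{UniformEllip} makes all the relevant Calder\'on--Zygmund constants depend only on $n$, $\lambda$, $\Lambda$; hence each $T_{ij}$ is bounded on $L^p$ with norm depending only on $n,p,\lambda,\Lambda$. The crucial point is the commutator estimate: by the Coifman--Rochberg--Weiss theorem (extended to singular integrals with variable kernels via the spherical-harmonic expansion of the kernel), $[a^{kl}, T_{ij}]$ is bounded on $L^p$ with norm controlled by $\|a^{kl}\|_\ast$. What forces the VMO hypothesis is the localization: for $g$ supported in a ball $B_r$ one may replace $a^{kl}$ by $a^{kl} - (a^{kl})_{B_r}$ without changing the commutator, and then the $L^p(B_r)\to L^p(B_r)$ norm of $[a^{kl}, T_{ij}]$ acting on such $g$ is bounded by $C\,\eta_{a^{kl}}(r)$, which tends to $0$ as $r \to 0$. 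Consequently there is $r_0 > 0$, depending only on $n,p,\lambda,\Lambda$ and the VMO-moduli of the $a^{ij}$, such that for $u$ supported in a ball of radius $r \le r_0$ the commutator contribution in the representation above can be absorbed into the left-hand side, yielding the a priori bound $\|D^2 u\|_{L^p(B_r)} \le C\,\|Lu\|_{L^p(B_r)}$.

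To remove the compact-support restriction and pass to $D'' \subset \subset D' \subset \subset D$, I would fix a cutoff $\phi \in C_0^\infty(B_r)$ with $\phi \equiv 1$ on $B_{r/2}$, apply the estimate to $\phi u$, and expand $L(\phi u) = \phi\,Lu + 2a^{ij}D_i\phi\,D_j u + u\,a^{ij}D_{ij}\phi$. The zeroth-order term contributes $\|u\|_{L^p(B_r)}$, and the first-order term is dominated, via the interpolation inequality $\|\nabla u\|_{L^p} \le \varepsilon\|D^2 u\|_{L^p} + C_\varepsilon\|u\|_{L^p}$, by $\varepsilon\|D^2 u\|_{L^p(B_r)}$ plus $C_\varepsilon\|u\|_{L^p(B_r)}$; choosing $\varepsilon$ small and absorbing gives a local estimate $\|u\|_{W^{2,p}(B_{r/2})} \le C\big(\|u\|_{L^p(B_r)} + \|Lu\|_{L^p(B_r)}\big)$. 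Covering $\overline{D''}$ by finitely many balls of radius $\le r_0$ whose doubles lie in $D'$ and summing the local estimates produces \refeqn{CFL91est}, with $C$ depending on $n,\lambda,\Lambda,p$, on $\mathrm{dist}(\partial D'', D')$ through the number and size of the covering balls, and on the $a^{ij}$ only through their VMO-moduli.

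The main obstacle is the commutator estimate in its quantitative, scale-localized form --- namely that $[a,T]$ is $L^p$-bounded for $a \in \mathrm{BMO}$ and, decisively, that its operator norm on functions supported in a small ball is controlled by the VMO-modulus at that scale. This is precisely what rules out merely assuming continuity or boundedness of the $a^{ij}$, and it is the step requiring the Coifman--Rochberg--Weiss machinery (the sharp maximal function together with a good-$\lambda$ inequality, equivalently an exponential-integrability argument through John--Nirenberg) adapted to variable-coefficient Calder\'on--Zygmund kernels.
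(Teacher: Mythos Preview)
Your sketch is a faithful outline of the Chiarenza--Frasca--Longo argument: the representation of $D_{ij}u$ via variable-kernel singular integrals of $Lu$ plus commutators $[a^{kl},T_{ij}](D_{kl}u)$, the Coifman--Rochberg--Weiss commutator bound in the scale-localized form $\|[a^{kl},T_{ij}]\|_{L^p(B_r)\to L^p(B_r)} \le C\,\eta_{a^{kl}}(r)$, the absorption at small scales, and the cutoff-plus-covering passage from compactly supported $u$ to general $u$ on $D'' \subset\subset D' \subset\subset D$. This is exactly the architecture of Theorem~4.2 in \cite{CFL1}.

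However, there is nothing to compare against: the paper does \emph{not} prove this theorem. It is stated in Section~2 as a quoted result from \cite{CFL1} (note the parenthetical ``Taken from Theorem~4.2 of \cite{CFL1}'' in the heading) and is used throughout as a black box, just like the companion boundary estimate (Theorem~\ref{CFLBR}) quoted from \cite{CFL2}. So your proposal is not an alternative to the paper's proof---it is a correct summary of the original proof that the paper simply cites.
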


\begin{theorem}[Boundary Regularity (Taken from Theorem 4.2 of \cite{CFL2})]  \label{CFLBR}
Let $p \in (1, \infty)$ and assume that $u \in W^{2,p}(B_1) \cap W^{1,p}_0(B_1).$  Then there exists a constant $C$ such that
\begin{equation}
   ||u||_{W^{2,p}(B_1)} \leq C(||u||_{L^p(B_1)} + ||Lu||_{L^p(B_1)}) \;.
\label{eq:CFL93est}
\end{equation}
The constant $C$ depends on $n, \lambda, \Lambda, p,$ and quantities which
depend only on the $a^{ij}.$  
\end{theorem}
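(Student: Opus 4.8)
Since this is Theorem~4.2 of \cite{CFL2}, I will only outline the idea. The plan is to obtain the global estimate on $B_1$ by combining the interior estimate of Theorem\refthm{CFLIR}with a local estimate near $\partial B_1$ obtained by flattening the boundary and running the Chiarenza--Frasca--Longo singular-integral argument in a half-ball. First I would cover $\overline{B_1}$ by finitely many open sets; on those not meeting $\partial B_1$ the bound $\|u\|_{W^{2,p}} \le C(\|u\|_{L^p} + \|Lu\|_{L^p})$ follows at once from Theorem\refthm{CFLIR}\!\!. Near a boundary point, since $\partial B_1$ is smooth (indeed real-analytic), a smooth change of variables maps a neighborhood onto a half-ball $B^+_\rho := B_\rho \cap \{x_n > 0\}$, carries $\partial B_1$ onto the flat piece $\{x_n = 0\}$, and transforms $L$ into $\widetilde{L} = \tilde{a}^{ij}D_{ij} + \tilde{b}^i D_i$ with $\tilde{a}^{ij}$ symmetric and uniformly elliptic (same $\lambda,\Lambda$ up to a fixed factor) and $\tilde{b}^i$ bounded; crucially, since a smooth change of variables preserves VMO with control of the VMO-modulus, the new $\tilde{a}^{ij}$ again belong to VMO with modulus controlled in terms of $\eta_{a^{ij}}$ and the chart, and $u \in W^{1,p}_0(B_1)$ becomes $v = 0$ on $\{x_n = 0\}$. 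So it suffices to prove a half-ball estimate $\|v\|_{W^{2,p}(B^+_{\rho/2})} \le C(\|v\|_{L^p(B^+_\rho)} + \|\widetilde{L}v\|_{L^p(B^+_\rho)})$, the first-order term being absorbed at the end by interpolation.

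For the half-ball estimate, multiply $v$ by a cutoff so that it is supported in a small half-ball $B^+_r$ centered at a point $x_0 \in \{x_n = 0\}$, and freeze the leading coefficients at $x_0$, writing $\widetilde{L} = \tilde{a}^{ij}(x_0)D_{ij} + [\tilde{a}^{ij}(x) - \tilde{a}^{ij}(x_0)]D_{ij} + \tilde{b}^iD_i$. After the linear change of variables $y = \mathcal{A}(x_0)^{-1/2}x$, which turns $\tilde{a}^{ij}(x_0)D_{ij}$ into $\Delta$ and the half-space into a half-space through the origin, the Dirichlet Green's function is the classical one produced by odd reflection of the Newtonian potential; pulling it back gives, for such compactly supported $v$, a representation
\begin{equation}
D_{ij} v(x) = \text{p.v.}\!\int \Gamma^{x_0}_{ij}(x,y)\,\bigl[\tilde{a}^{k\ell}(x_0) - \tilde{a}^{k\ell}(y)\bigr]\,D_{k\ell} v(y)\,dy \;+\; T^{x_0}_{ij}(g)(x) \;+\; c^{x_0}_{ij}\,g(x) \;,
\label{eq:boundaryrep}
\end{equation}
where $g := \widetilde{L}v - \tilde{b}^m D_m v$, the kernel $\Gamma^{x_0}_{ij}$ is the second-derivative kernel of the half-space Green's function of $\tilde{a}^{ij}(x_0)D_{ij}$, and $T^{x_0}_{ij}$ is the associated singular integral operator. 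By uniform ellipticity all of these are Calder\'on--Zygmund operators with bounds uniform in $x_0$, the reflected piece of the Green's function contributing only the usual boundary-adapted kernels for $\Delta$ on a half-space; in particular $T^{x_0}_{ij}$ is $L^p$-bounded, so its contribution is controlled by $\|\widetilde{L}v\|_{L^p} + \|Dv\|_{L^p}$.

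The crux is the first term of \refeqn{boundaryrep}\!\!: writing it as a commutator $[\tilde{a}^{k\ell}, \mathcal{K}^{x_0}_{ij}]$ acting on $D_{k\ell}v$, one applies the Coifman--Rochberg--Weiss commutator theorem together with the sharpened local version used in \cite{CFL1} and \cite{CFL2} to see that on $B^+_r$ the operator norm of this commutator is bounded by $C\,\eta_{\tilde{a}}(r)$, which tends to $0$ as $r \to 0$. Choosing $r$ small enough, depending only on $n,\lambda,\Lambda,p$ and the VMO-modulus of $\tilde{a}$ (hence of $a^{ij}$), that $C\,\eta_{\tilde{a}}(r) \le 1/2$, \refeqn{boundaryrep} yields $\|D^2v\|_{L^p(B^+_r)} \le \tfrac12\|D^2v\|_{L^p(B^+_r)} + C\|\widetilde{L}v\|_{L^p} + C\|Dv\|_{L^p} + C\|v\|_{L^p}$; absorbing the first term on the right, removing $\|Dv\|_{L^p}$ by interpolation, then summing over a finite covering of $B^+_{\rho/2}$ by such small half-balls (boundary ones handled by this argument, interior ones by Theorem\refthm{CFLIR}\!) gives the half-ball estimate, and assembling the finitely many boundary charts and interior pieces over $\overline{B_1}$ produces the stated bound, with $C$ depending on $n,\lambda,\Lambda,p$ and on the $a^{ij}$ only through their ellipticity and VMO-modulus. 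The main obstacle is precisely this last commutator step \emph{near the flat boundary}: one must verify that the second-derivative kernels $\Gamma^{x_0}_{ij}$ of the half-space Dirichlet Green's functions satisfy the size and cancellation hypotheses needed for the Coifman--Rochberg--Weiss bound and its VMO refinement, and that all constants are uniform as $x_0$ ranges over $\{x_n = 0\}$ --- uniformity guaranteed by \refeqn{UniformEllip}, which controls the linear maps $\mathcal{A}(x_0)^{-1/2}$ and hence the reflected kernels.
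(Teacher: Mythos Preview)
Your outline is a faithful sketch of the Chiarenza--Frasca--Longo argument, but there is nothing to compare it to: the paper does not prove this theorem at all. Theorem\refthm{CFLBR}is quoted verbatim from \cite{CFL2} as background (``we will state them here''), with no proof given or intended; it is used as a black box in the existence and compactness arguments that follow. So your proposal is not so much an alternative as a supplement --- it fills in what the paper deliberately omits by citation. If you want to include it, label it clearly as a sketch of the argument in \cite{CFL2} rather than as the paper's own proof.
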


\begin{remark}[$C^{1,1}$ domains are good enough] \label{CFL93C11}
We wrote the last result with balls because we will not apply it on any other type of set, but in \cite{CFL2}, they prove the
result for arbitrary bounded $C^{1,1}$ domains.  Of course for a $C^{1,1}$ domain, the constant $C$ will have dependance on
the regularity of the boundary.
\end{remark}

\begin{corollary}[Boundary Regularity II] \label{CFLBRII}
Let $p \in (1, \infty)$ and assume that $u, \psi \in W^{2,p}(B_1),$ and $u - \psi \in W^{1,p}_0(B_1).$ 
Then there exists a constant $C$ such that
\begin{equation}
   ||u||_{W^{2,p}(B_1)} \leq C(||u||_{L^p(B_1)} + ||Lu||_{L^p(B_1)} + ||\psi||_{W^{2,p}(B_1)}) \;.
\label{eq:CFL93est}
\end{equation}
The constant $C$ depends on $n, \lambda, \Lambda, p,$ and quantities which
depend only on the $a^{ij}.$  
\end{corollary}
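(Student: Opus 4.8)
The plan is to reduce the inhomogeneous Dirichlet data to the zero-data situation already handled in Theorem~\ref{CFLBR}. Set $v := u - \psi$. Since $u, \psi \in W^{2,p}(B_1)$ we have $v \in W^{2,p}(B_1)$, and by hypothesis $v = u - \psi \in W^{1,p}_0(B_1)$, so $v$ satisfies precisely the assumptions of Theorem~\ref{CFLBR}. Applying that theorem to $v$ yields
$$\|v\|_{W^{2,p}(B_1)} \le C\bigl(\|v\|_{L^p(B_1)} + \|Lv\|_{L^p(B_1)}\bigr),$$
with $C$ depending only on $n, \lambda, \Lambda, p$ and quantities depending only on the $a^{ij}$.

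Next I would estimate each term on the right in terms of the quantities permitted in the statement. By linearity of $L$ we have $Lv = Lu - L\psi$ almost everywhere, so $\|Lv\|_{L^p(B_1)} \le \|Lu\|_{L^p(B_1)} + \|L\psi\|_{L^p(B_1)}$. From the upper bound in the ellipticity condition~\refeqn{UniformEllip} (together with symmetry and positivity) one gets a pointwise bound $|a^{ij}(x)| \le \Lambda$ for all $i,j$ and a.e.\ $x$, hence $|L\psi(x)| = |a^{ij}(x)D_{ij}\psi(x)| \le C(n)\Lambda\,|D^2\psi(x)|$ a.e., which gives $\|L\psi\|_{L^p(B_1)} \le C(n)\Lambda\,\|\psi\|_{W^{2,p}(B_1)}$. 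Also $\|v\|_{L^p(B_1)} \le \|u\|_{L^p(B_1)} + \|\psi\|_{L^p(B_1)} \le \|u\|_{L^p(B_1)} + \|\psi\|_{W^{2,p}(B_1)}$. Substituting these into the displayed estimate yields
$$\|v\|_{W^{2,p}(B_1)} \le C\bigl(\|u\|_{L^p(B_1)} + \|Lu\|_{L^p(B_1)} + \|\psi\|_{W^{2,p}(B_1)}\bigr),$$
with a constant $C$ of the required type.

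Finally, writing $u = v + \psi$ and using the triangle inequality, $\|u\|_{W^{2,p}(B_1)} \le \|v\|_{W^{2,p}(B_1)} + \|\psi\|_{W^{2,p}(B_1)}$, and combining with the previous line (absorbing the extra $\|\psi\|_{W^{2,p}(B_1)}$ into the constant) gives the claimed inequality. I do not expect any serious obstacle here; the one point that requires a moment's care is the passage from~\refeqn{UniformEllip} to an $L^\infty$ bound on the individual coefficients $a^{ij}$, which is what lets us control $\|L\psi\|_{L^p}$ by $\|\psi\|_{W^{2,p}}$ — and this is immediate, since testing $\mathcal{A} \le \Lambda I$ against $e_i$ bounds the diagonal entries and testing against $e_i \pm e_j$ bounds the off-diagonal ones.
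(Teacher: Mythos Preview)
Your argument is correct and is precisely the intended reduction: the paper states the corollary without proof, treating it as an immediate consequence of Theorem~\ref{CFLBR} via the substitution $v = u - \psi$, which is exactly what you carry out. The only nontrivial detail is the $L^\infty$ bound on the individual entries $a^{ij}$ from~\refeqn{UniformEllip}\!\!, and you handle that correctly.
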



\newsec{Existence Theory when $a^{ij} \in$ VMO}{EVMO}
We assume
\begin{equation}
a^{ij} \in \text{VMO.}
\label{eq:VMOass}
\end{equation}
With this assumption coupled with our assumption given in Equation\refeqn{UniformEllip}we hope to show the existence of a
nonnegative solution to Equation\refeqn{BasicProb}with nonnegative continuous Dirichlet data, $\psi,$ given on $\partial B_1.$
In order to ease our exposition later, we will assume that we have extended $\psi$ to be a nonnegative continuous function
onto all of $B_2,$ and for the time being, we will assume that our extended function $\psi$ belongs to $W^{2,p}(B_2)$ for all
$p \in (1, \infty).$

Next, let $\phi(x)$ denote a standard mollifier with support in $B_1,$ and set $\phi_{\epsilon}(x) := \epsilon^{-n} \phi(x/\epsilon).$
In order to approximate the Heaviside function, we let $\Phi_{\epsilon}(t)$ be a function which satisfies
\begin{equation}
\begin{array}{l}
\displaystyle{1. \ \ \ 0 \leq \Phi_{\epsilon}(t) \leq 1, \ \forall t \in \R.} \\
\displaystyle{2. \ \ \  \Phi_{\epsilon}(t) \equiv 0 \ \ \text{if} \ \ t \leq 0.} \\
\displaystyle{3. \ \ \  \Phi_{\epsilon}(t) \equiv 1 \ \ \text{if} \ \ t \geq \epsilon.} \\
\displaystyle{4. \ \ \  \Phi_{\epsilon}(t) \ \ \text{is monotone nondecreasing.}} \\
\displaystyle{5. \ \ \  \Phi_{\epsilon} \in C^{\infty}.}
\end{array}
\label{eq:PhiProps}
\end{equation}
We define $a^{ij}_{\epsilon} := a^{ij} \ast \phi_{\epsilon},$ we define $\psi_{\epsilon} := \psi \ast
\phi_{\epsilon},$ and finally, we let $w_{\epsilon}$ denote the solution to the problem
\BVPb{a^{ij}_{\epsilon}(x) D_{ij} u(x) = \Phi_{\epsilon}(u(x))}{u(x) = \psi_{\epsilon}(x)}{AppProb}
\begin{lemma}[Existence of a Solution to the Semilinear PDE]  \label{EUSSPDE}
The boundary value problem\refeqn{AppProb}has a nonnegative solution in $C^{\infty}(\ohclosure{B_1}).$
\end{lemma}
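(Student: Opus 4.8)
The plan is to treat this as a standard fixed-point/elliptic existence problem, exploiting the fact that after mollification all the data are smooth. First I would fix $\epsilon > 0$ and set up a solution operator: given $v \in C^{0}(\ohclosure{B_1})$ (or $C^{0,\alpha}$), let $Tv$ be the unique solution $u$ of the linear problem $a^{ij}_{\epsilon}D_{ij}u = \Phi_{\epsilon}(v)$ in $B_1$, $u = \psi_{\epsilon}$ on $\partial B_1$. Since $a^{ij}_{\epsilon} = a^{ij}\ast\phi_{\epsilon}$ is smooth and still satisfies Equation\refeqn{UniformEllip}\ with the same $\lambda,\Lambda$ (convolution with a probability kernel preserves the ellipticity bounds pointwise), and since $\Phi_{\epsilon}(v) \in C^{0}$ is bounded in $[0,1]$, classical Schauder theory (Gilbarg--Trudinger, Thm.\ 6.14 or its $C^{2,\alpha}$ interior/$C^{0}$ boundary analogue, or even just $W^{2,p}$ theory with $p$ large followed by Sobolev embedding) gives a unique solution $u \in C^{2,\alpha}(\ohclosure{B_1})$ depending continuously on the right-hand side. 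Thus $T$ is well-defined and maps $C^{0}(\ohclosure{B_1})$ into itself.

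Next I would verify the hypotheses of the Schauder fixed point theorem for $T$. Compactness: because $\Phi_\epsilon(v)$ is uniformly bounded by $1$ regardless of $v$, the $W^{2,p}(B_1)$ norm of $Tv$ is bounded by a constant $C(n,\lambda,\Lambda,p,\epsilon,\|\psi_\epsilon\|_{W^{2,p}})$ independent of $v$ (using Corollary\refthm{CFLBRII}\ or just classical $L^p$ theory for smooth coefficients). Taking $p > n$ and using the compact embedding $W^{2,p}(B_1) \hookrightarrow\hookrightarrow C^{1,\beta}(\ohclosure{B_1})$, the image $T(C^{0})$ is precompact in $C^{0}(\ohclosure{B_1})$ and lands in a fixed closed ball $K$. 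Continuity of $T$: if $v_k \to v$ in $C^0$, then $\Phi_\epsilon(v_k)\to\Phi_\epsilon(v)$ in $L^p$ by dominated convergence (continuity and boundedness of $\Phi_\epsilon$), so $Tv_k \to Tv$ in $W^{2,p}$ hence in $C^0$. Schauder's theorem then yields a fixed point $w_\epsilon = Tw_\epsilon$, which by the regularity above lies in $C^{2,\alpha}(\ohclosure{B_1})$, and then a bootstrap (differentiating the equation, using that $a^{ij}_\epsilon,\Phi_\epsilon,\psi_\epsilon$ are all $C^\infty$) upgrades this to $w_\epsilon \in C^\infty(\ohclosure{B_1})$.

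It remains to show $w_\epsilon \geq 0$, which is where the structure of $\Phi_\epsilon$ (properties 1 and 2 in Equation\refeqn{PhiProps}\!) is used. The argument is a maximum principle comparison: on the set $\{w_\epsilon < 0\}$ we have $w_\epsilon < 0$, so $\Phi_\epsilon(w_\epsilon) \equiv 0$ there, hence $a^{ij}_\epsilon D_{ij}w_\epsilon = 0$ in $\{w_\epsilon<0\}$; since $w_\epsilon = \psi_\epsilon \geq 0$ on $\partial B_1$ and $w_\epsilon = 0$ on $\partial\{w_\epsilon<0\}\cap B_1$, the boundary data of $w_\epsilon$ on $\partial(\{w_\epsilon<0\})$ is nonnegative, and the Aleksandrov--Bakelman--Pucci / classical minimum principle for the elliptic operator $a^{ij}_\epsilon D_{ij}$ (no zeroth-order term) forces $w_\epsilon \geq 0$ throughout, a contradiction unless $\{w_\epsilon<0\} = \emptyset$.

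I expect the main obstacle to be purely bookkeeping rather than conceptual: one must be slightly careful that the fixed-point argument is run in a function space where $T$ is genuinely continuous and compact (the nonlinearity $\Phi_\epsilon$ is only as smooth as assumed, but it is Lipschitz, which is plenty), and that the a priori bound used for compactness is uniform in $v$ but is \emph{allowed} to depend on $\epsilon$ (it does, through $a^{ij}_\epsilon$ and $\psi_\epsilon$ — uniformity in $\epsilon$ is neither claimed nor needed in this lemma). A minor point worth stating explicitly is that since $\psi \in W^{2,p}(B_2)$ for all $p$, the mollified datum $\psi_\epsilon$ is smooth up to $\partial B_1$ for $\epsilon$ small, so the boundary data causes no regularity loss. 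Everything else is a routine application of Schauder theory for smooth-coefficient equations plus the minimum principle.
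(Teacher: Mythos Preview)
Your argument is correct, but it takes a different route from the paper. The paper proves existence by the \emph{method of continuity}: it introduces the family of problems $a^{ij}_\epsilon D_{ij}u = t\,\Phi_\epsilon(u)$ for $t\in[0,1]$, shows the solvable set $S\subset[0,1]$ is nonempty at $t=0$ (linear Schauder theory), open (implicit function theorem in Banach spaces, using that the linearization has zeroth-order coefficient $-t\Phi_\epsilon'(u)\leq 0$ thanks to the monotonicity of $\Phi_\epsilon$), and closed (uniform $W^{2,p}$ and then $C^{3,\alpha}$ a priori bounds plus Arzel\`a--Ascoli). Your Schauder fixed-point approach is arguably cleaner here: it never uses the monotonicity of $\Phi_\epsilon$ (only boundedness and continuity), avoids the Banach-space calculus, and the compactness step is a one-line consequence of $0\leq\Phi_\epsilon\leq 1$. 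The paper's approach, on the other hand, is more flexible if one later wanted quantitative dependence on $t$ or uniqueness information coming from the linearization. Both proofs finish with the same minimum-principle argument for $w_\epsilon\geq 0$ and the same Schauder bootstrap to $C^\infty$.
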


\begin{pf}
We will show that the solution, $w_{\epsilon},$ exists by a fairly standard method of continuity argument below.  Using the
weak maximum principle it also follows that $w_{\epsilon} \geq 0.$  By Schauder theory it follows that any $C^{2,\alpha}$
solution is automatically $C^{\infty},$ so it will suffice to get a $C^{2,\alpha}$ solution.

We let $S$ be the set of $t \in [0,1]$ such that the following problem is solvable in $C^{2,\alpha}(\ohclosure{B_1}):$
\BVPb{a^{ij}_{\epsilon}(x) D_{ij} u(x) =t \Phi_{\epsilon}(u(x))}{u(x) = \psi_{\epsilon}(x)}{AppProb2}
Equation\refeqn{AppProb2}is solvable for $t=0$ by Schauder Theory.  (See chapter 6 of \cite{GT}.)
Thus, $S$ is nonempty.\\

\noindent
Claim 1: $S$ as a subset of $[0,1]$ is open.\\
Proof.  We define $L^t(u)$ as a map from the Banach space $C^{2,\alpha}(\ohclosure{B_1})$ to the Banach
space $\mathbf{Y}$ which we define as the direct sum 
$C^{\alpha}(\ohclosure{B_1}) \oplus C^{2,\alpha}(\tclosure{\partial B_1}) \;.$  (The new norm can be taken
as the square root of the sums of the squares of the individual norms.)
Our precise definition of $L^t(u)$ is then
$$L^t(u) \; := \; ( \; a^{ij}_{\epsilon} D_{ij}u - t \Phi_{\epsilon}(u) \;, \ u \; ) \;.$$
Doing calculus in Banach space one can verify that, $[DL^{t}(u)]v$ is equal to
$$( \; a^{ij}_{\epsilon} D_{ij}v - t \Phi^{\prime}_{\epsilon}(u) v \; , \ v \; ) \;,$$
and since $\Phi_{\epsilon}(t)$ is monotone increasing and smooth we know that the first component of this expression has the form:
$$a^{ij}_{\epsilon}(x) D_{ij}v(x)-t c(x)v(x) \ \ \text{and} \ \ c(x) \geq 0 \ \forall x \;.$$
By Schauder theory again (see chapter 6 of \cite{GT}) the problem 
\BVPb{a^{ij}_{\epsilon} D_{ij} v - t cv= f}{v = g}{AppProb3ep}
has a unique solution for any pair $(f, \; g) \in \mathbf{Y}$ which satisfies the usual a priori estimates. In other words
$$ [DL^t(u)]^{-1}: \mathbf{Y} \rightarrow C^{ 2,\alpha }(\ohclosure{B_1}) \  \text{is a bounded 1-1 map.} \ $$
Therefore, by the infinite dimensional implicit function theorem in Banach spaces, $S$ is open. \\

\noindent
Claim 2: $S$ is closed.\\
Proof. This step is accomplished using a priori estimates.
We know that $0 \leq t \Phi_{\epsilon}(u(x)) \leq 1$.
So we have $||a^{ij}_{\epsilon}(x) D_{ij}u(x)||_{L^{\infty}(B_1)} \leq 1,$ and so for any $p$ we
have (see Chapter 9 of \cite{GT}),
$$||u||_{ W^{2,p}(B_1) } \leq C \left( 1 + ||\psi_{\epsilon}||_{ C^{0}(\partial B_1) } \right) \leq C.$$  By the Sobolev embedding,
$$||u||_{C^{1,\alpha}(\closure{B_1})} \leq C ||u||_{W^{2,p}(B_1)} \leq C, \  \text{and so} \  
    ||t\Phi_{\epsilon}(u)||_{C^{1,\alpha}(\closure{B_1})} \leq C.$$
Consequently, by Schauder theory again, $u \in C^{3,\alpha}$ and $||u||_{C^{3,\alpha}(\closure{B_1})} \leq C.$
Now by Arzela-Ascoli, if ${t_k} \subset S$ with $t_k \rightarrow t_{\infty} \in [0,1]$, then the corresponding solutions
$u_{t_k}$ must converge uniformly together with their $1^{\text{st}}$ and $2^{\text{nd}}$ derivatives to a $C^{3,\alpha}$
function. This function must then solve the $t_{\infty}$ problem as the left hand sides and right hand sides of the equations
in\refeqn{AppProb2}are converging uniformly. Thus, $S$ is closed, and hence $S$ must be the entire set, $[0,1]$.
\end{pf}

\begin{theorem}[Existence of a Solution to the Free Boundary Problem]   \label{MainExistence}
Assume Equation\refeqn{UniformEllip}holds, assume that $a^{ij} \in \text{VMO},$ and assume that $\psi$ is nonnegative,
continuous, and belongs to $W^{2,p}(B_1)$ for all $p \in (1,\infty).$ Then there exists a nonnegative function
$w \in W^{2,p}(B_1)$ which solves Equation\refeqn{BasicProb}and satisfies
$w - \psi \in W^{2,p}(B_1) \cap W^{1,p}_{0}(B_1)$ for all $p \in (1,\infty).$ In other words, $w$ satisfies:
\BVPb{a^{ij}(x) D_{ij} w(x) = \chisub{\{w > 0\}}(x)}{w(x) = \psi(x)}{AppProb3}
\end{theorem}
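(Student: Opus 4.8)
The plan is to pass to the limit as $\epsilon \to 0$ in the approximating problems \refeqn{AppProb}, whose solvability was established in Lemma \ref{EUSSPDE}. So fix a sequence $\epsilon_k \downarrow 0$, and let $w_k := w_{\epsilon_k}$ be the corresponding smooth nonnegative solutions of \refeqn{AppProb} with $L_k := a^{ij}_{\epsilon_k} D_{ij}$ and boundary data $\psi_{\epsilon_k}$. The first step is a uniform $W^{2,p}$ bound. Since $0 \le \Phi_{\epsilon_k}(w_k) \le 1$ pointwise, we have $\|L_k w_k\|_{L^\infty(B_1)} \le 1$, hence $\|L_k w_k\|_{L^p(B_1)} \le |B_1|^{1/p}$ for every $p$. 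Writing $v_k := w_k - \psi_{\epsilon_k}$, we have $v_k \in W^{2,p}(B_1) \cap W^{1,p}_0(B_1)$ and $L_k v_k = L_k w_k - L_k \psi_{\epsilon_k}$, which is bounded in $L^p(B_1)$ uniformly in $k$ because $\psi_{\epsilon_k} \to \psi$ in $W^{2,p}$ (mollification) and the $a^{ij}_{\epsilon_k}$ are uniformly bounded by $\Lambda$. The one subtlety is that Theorem \ref{CFLBR} — and hence Corollary \ref{CFLBRII} — is stated for the fixed operator $L$ with coefficients $a^{ij} \in \text{VMO}$, whereas here the operator is $L_k$ with the mollified coefficients $a^{ij}_{\epsilon_k}$. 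This is the point that needs care: one must check that the constant $C$ in \refeqn{CFL93est} can be taken independent of $k$. Since $a^{ij}_{\epsilon_k} \to a^{ij}$ in BMO and the VMO-modulus of $a^{ij}_{\epsilon_k}$ is controlled by that of $a^{ij}$ (convolution does not increase mean oscillation by more than a fixed factor, and $\eta_{a^{ij}_{\epsilon_k}}(r) \le \eta_{a^{ij}}(r) + o(1)$ uniformly), the dependence of $C$ on "quantities depending only on the $a^{ij}$" is in fact uniform over $k$; and the ellipticity constants $\lambda, \Lambda$ are preserved by mollification. Thus by Corollary \ref{CFLBRII} we obtain $\|w_k\|_{W^{2,p}(B_1)} \le C(p, n, \lambda, \Lambda, \text{VMO-modulus of } a^{ij})$, uniformly in $k$, for every $p \in (1,\infty)$.

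The second step is compactness. Fix a large $p$; by the uniform bound and reflexivity of $W^{2,p}(B_1)$, after passing to a subsequence we have $w_k \rightharpoonup w$ weakly in $W^{2,p}(B_1)$, and by the Rellich–Kondrachov theorem $w_k \to w$ strongly in $C^{1,\alpha}(\overline{B_1})$ for $\alpha = 1 - n/p$ (choosing $p > n$). In particular $w \ge 0$, $w = \psi$ on $\partial B_1$, and $w - \psi \in W^{2,p}(B_1) \cap W^{1,p}_0(B_1)$ (the trace-zero subspace is weakly closed, and $\psi_{\epsilon_k} \to \psi$). Running this for an exhausting sequence $p_m \uparrow \infty$ and a diagonal argument gives $w \in W^{2,p}(B_1)$ for all $p$, with the same membership for $w - \psi$.

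The third and main step is identifying the equation almost everywhere, i.e. showing $a^{ij} D_{ij} w = \chisub{\{w>0\}}$ a.e. in $B_1$. For the left-hand side: $D_{ij} w_k \rightharpoonup D_{ij} w$ weakly in $L^p$, and $a^{ij}_{\epsilon_k} \to a^{ij}$ in $L^q_{loc}$ for every $q < \infty$ (mollification of a BMO, hence $L^q_{loc}$, function), so the products converge $a^{ij}_{\epsilon_k} D_{ij} w_k \rightharpoonup a^{ij} D_{ij} w$ in $L^r_{loc}$ for a suitable $r > 1$ — more carefully, write $a^{ij}_{\epsilon_k} D_{ij} w_k - a^{ij} D_{ij} w = a^{ij}_{\epsilon_k}(D_{ij}w_k - D_{ij}w) + (a^{ij}_{\epsilon_k} - a^{ij}) D_{ij} w$; the second term $\to 0$ strongly in $L^r_{loc}$ by Hölder and $L^q$-convergence of the coefficients, and the first term $\rightharpoonup 0$ weakly in $L^r_{loc}$ since $a^{ij}_{\epsilon_k}$ is bounded and converges a.e. (so multiplication by it is, in the limit, a benign perturbation of the weak convergence $D_{ij}w_k \rightharpoonup D_{ij} w$; one passes through a.e.-convergence of $a^{ij}_{\epsilon_k}$ along a further subsequence). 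For the right-hand side, the hard part: one must show $\Phi_{\epsilon_k}(w_k) \to \chisub{\{w > 0\}}$ a.e., or at least weakly, in $B_1$. Off the set $\{w=0\}$, $C^1$-convergence $w_k \to w$ forces $w_k \ge \epsilon_k$ eventually on any compact subset of $\{w>0\}$, so $\Phi_{\epsilon_k}(w_k) = 1$ there; and property 2 of \refeqn{PhiProps} gives $0 \le \Phi_{\epsilon_k}(w_k) \le \chisub{\{w_k > 0\}}$. The genuinely delicate point is the behavior on $\{w = 0\}$, where one needs that $D_{ij}w = 0$ a.e.\ on $\{w=0\}$ (a standard fact: $D_{ij} u = 0$ a.e.\ on any level set of $u \in W^{2,p}$) so that the equation reads $0 = \chisub{\{w>0\}} = 0$ a.e.\ on $\{w=0\}$, which is automatically consistent; combined with the previous observations, $a^{ij} D_{ij} w = \chisub{\{w>0\}}$ holds a.e.\ on $\{w > 0\}$ and trivially (both sides zero) a.e.\ on $\{w=0\}$. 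Thus $w$ solves \refeqn{AppProb3}, completing the proof. I expect the $k$-uniformity of the Chiarenza–Frasca–Longo constant in Step 1 and the a.e.\ identification of $\chisub{\{w>0\}}$ on the coincidence set in Step 3 to be the two places demanding real work; everything else is soft functional analysis.
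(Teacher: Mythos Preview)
Your proposal is correct and follows essentially the same strategy as the paper: uniform $W^{2,p}(B_1)$ bounds via Corollary~\ref{CFLBRII} (with the key observation that the VMO-moduli of the mollified $a^{ij}_{\epsilon_k}$ are dominated by that of $a^{ij}$, so the Chiarenza--Frasca--Longo constant is $k$-independent), then weak $W^{2,p}$/strong $C^{1,\alpha}$ compactness, then identification of the limiting equation via weak convergence of $a^{ij}_{\epsilon_k}D_{ij}w_k$ to $a^{ij}D_{ij}w$ and of $\Phi_{\epsilon_k}(w_k)$ to $\chi_{\{w>0\}}$. The one small variation is on the coincidence set $\{w=0\}$: the paper runs a contradiction argument (splitting $\int_\Lambda \Phi_{\epsilon_n}(w_{\epsilon_n})$ into three pieces) to show $\Phi_{\epsilon_k}(w_k)\to 0$ a.e.\ there, whereas you bypass this by directly verifying that both sides of the equation vanish a.e.\ on $\{w=0\}$ --- but both arguments rest on the same standard fact that $D_{ij}w=0$ a.e.\ on $\{w=0\}$, so this is a cosmetic streamlining rather than a different idea.
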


\begin{pf}
We let $w_{\epsilon}$ denote the solution to the problem\refeqn{AppProb}\!\!, and we view the $a^{ij}_{\epsilon}$
as elements of VMO, and observe that the VMO-moduli $\eta_{a^{ij}_{\epsilon}}$\!'s (see Equation\refeqn{etadef}\!) are all dominated
by the VMO-modulus of the corresponding $a^{ij}.$  (This fact is alluded to in Remark 2.2 of \cite{CFL1}.)
In fact, we can verify that all of the dependencies on the $a^{ij}$ of the constant within Corollary\refthm{CFLBRII}remain
under control as we send $\epsilon$ to zero.
At this point we can invoke this theorem
to get a uniform bound on the $W^{2,p}(B_1)$ norm of all of the $w_{\epsilon}$'s.  Standard
functional analysis allows us to choose a subsequence $\epsilon_n \downarrow 0,$ an $\alpha < 1,$ and a 
$w \in W^{2,p}(B_1) \cap C^{1,\alpha}(\dclosure{B_1})$ such that $w_{\epsilon_n}$ converges to $w$ strongly in
$C^{1,\alpha}(\dclosure{B_1})$ and weakly in $W^{2,p}(B_1).$  It remains to show that $w$ satisfies Equation\refeqn{BasicProb}\!.

The fact that $w(x) = \psi(x)$ on $\partial B_1$ follows immediately from the uniform convergence of the $w_{\epsilon_n}.$
Next we need to show that the PDE is satisfied almost everywhere.  Everywhere that $w(x) > 0$ it follows easily by the uniform
convergence of the $w_{\epsilon_n}$ that $\Phi_{\epsilon_n}(w_{\epsilon_n}(x))$ converges to $1.$  To show that
$\Phi_{\epsilon_n}(w_{\epsilon_n}(x))$ converges to $0$ almost everywhere on the set $\Lambda := \{ w = 0\}$ we
assume the opposite in order to derive a contradiction.  So, we can assume that there is a new subsequence (still labeled
with $\epsilon_n$ for convenience), such that $$0 < \gamma \leq \int_{\Lambda} \Phi_{\epsilon_n}(w_{\epsilon_n}(x)) \; dx$$
for all $n.$  Using this fact we have:
\begin{alignat*}{1}
0 &< \gamma \\
   & \leq \int_{\Lambda} \Phi_{\epsilon_n}(w_{\epsilon_n}) \; dx \\
   & = \int_{\Lambda} a^{ij}_{\epsilon_n} D_{ij} w_{\epsilon_n} \; dx \\
   & = \int_{\Lambda} (a^{ij}_{\epsilon_n} - a^{ij}) D_{ij} w_{\epsilon_n} \; dx 
      + \int_{\Lambda} a^{ij} (D_{ij} w_{\epsilon_n} - D_{ij} w ) \; dx
      + \int_{\Lambda} a^{ij} D_{ij} w \; dx \\
   & =: I + I\!I + I\!I\!I.
\end{alignat*}
Integral $I$ converges to zero by using \Holder\!'s inequality coupled with the strong convergence of $a^{ij}_{\epsilon}$ to 
$a^{ij}$ in all of the $L^p$ spaces.  Integral $I\!I$ converges to zero by using the weak convergence in $W^{2,p}$ of
$w_{\epsilon_n}$ to $w.$  Finally, integral $I\!I\!I$ is identically zero because the fact that $w \equiv 0$ on $\Lambda$
guarantees that $D^2w$ will be zero almost everywhere on $\Lambda.$
Thus $\Phi_{\epsilon}(w_{\epsilon})$ converges to $\chisub{\{w > 0\}}$ pointwise a.e., and as an immediate corollary
to this statement, $\Phi_{\epsilon}(w_{\epsilon})$ (and therefore also $a^{ij}_{\epsilon} D_{ij} w_{\epsilon}$) converges
weakly to $\chisub{\{w > 0\}}$ in $L^p(B_1)$ for any $1 < p < \infty.$

Again, by Corollary\refthm{CFLBRII}\!\!, we know $D_{ij}w_{\epsilon}$ is uniformly bounded in $L^p, 1 < p< \infty.$
In particular, $$||D_{ij}w_{\epsilon}||_{L^3(B_1)} \leq C \;.$$
Now let $g$ be an arbitrary function in $L^3(B_1),$ then:
\begin{alignat*}{1}
  & \int_{B_1} \left[ (a^{ij}_{\epsilon} D_{ij} w_{\epsilon}) g -(a^{ij}D_{ij}w)g \right] \; dx \\
  & = \int_{B_1} \left[ (a^{ij}_{\epsilon} D_{ij} w_{\epsilon}) g - (a^{ij}D_{ij}w_{\epsilon})g \right] \; dx 
     + \int_{B_1} \left[ (a^{ij} D_{ij} w_{\epsilon}) g - (a^{ij}D_{ij}w)g \right] \; dx \\
  & = I + I\!I.
\end{alignat*}
For any fixed $i,j,$ we can apply the \Holder inequality to see that the function $a^{ij}g$ is an element of $L^{3/2}(B_1),$
and then it follows that $I\!I \rightarrow 0$ from the fact that $D_{ij}w_{\epsilon}$ convereges to $D_{ij}w$ weakly in $L^{3}(B_1).$
On the other hand
$$I \leq ||D_{ij}w_{\epsilon}||_{L^3(B_1)} ||g||_{L^3(B_1)}||a^{ij}_{\epsilon}-a^{ij}||_{L^3(B_1)} \leq 
     C||a^{ij}_{\epsilon}-a^{ij}||_{L^3(B_1)} \rightarrow 0.$$
Hence, $a^{ij}_{\epsilon} D_{ij} w_{\epsilon} \  \text{converges weakly to} \ a^{ij}D_{ij}w$ in $L^3(B_1).$ 
By uniqueness of weak limits, it follows that $a^{ij}D_{ij}w = \chisub{\{w > 0\}}$ a.e.

\end{pf}


\newsec{Basic Results and Comparison Theorems}{BRCT}
In this section we will not need to make any assumptions about the regularity of the $a^{ij}$ besides the most basic ellipticity.
In spite of our weak hypotheses, we will still be able to show all of the basic regularity and nondegeneracy theorems that we
would expect.  The fact that we do not need $a^{ij} \in \text{VMO}$ for any result in this section will allow us to prove a
better measure stability theorem in the next section.  We will make one small regularity assumption, however:  We will assume that
our strong solutions are all continuous, which means that $p$ must be sufficiently large.
\begin{theorem}[Nondegeneracy] \label{NonDeg}
Let $w$ solve\refeqn{BasicProb}\!. If $B_r(x_0) \subset B_1$ and $x_0 \in \; \closure{\Omega}$, then 
\begin{equation}
\sup_{x \in B_r(x_0)} w(x) \geq C r^2 \;,
\label{eq:NonDeg}
\end{equation}
with $C = C(n, \Lambda).$
\end{theorem}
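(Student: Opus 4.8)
The plan is to follow Caffarelli's classical argument adapted to the nondivergence setting. First I would reduce to the case where $x_0 \in \Omega(w)$ itself, i.e. $w(x_0) > 0$; the case $x_0 \in \partial\Omega$ then follows by taking a sequence $x_0^k \in \Omega$ with $x_0^k \to x_0$, applying the estimate on slightly smaller balls $B_{r - |x_0 - x_0^k|}(x_0^k)$, and passing to the limit using continuity of $w$. So assume $w(x_0) > 0$. The idea is to compare $w$ with the explicit subsolution-type function
\begin{equation}
v(x) := w(x) - \frac{1}{2n\Lambda}|x - x_0|^2
\label{eq:NDcomparison}
\end{equation}
on the set $\Omega \cap B_r(x_0)$. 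Indeed, on $\Omega$ we have $Lw = a^{ij}D_{ij}w = 1$, while $a^{ij}D_{ij}\left(\frac{1}{2n\Lambda}|x-x_0|^2\right) = \frac{1}{n\Lambda}a^{ii} \geq \frac{1}{n\Lambda}\cdot n\lambda$... here I need to be careful: the trace of $\mathcal{A}$ is at most $n\Lambda$, so $a^{ij}D_{ij}(|x-x_0|^2) = 2\,\mathrm{tr}(\mathcal{A}) \leq 2n\Lambda$, hence with the constant $\frac{1}{2n\Lambda}$ we get $a^{ij}D_{ij}\left(\frac{1}{2n\Lambda}|x-x_0|^2\right) \leq 1 = Lw$ on $\Omega$. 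Therefore $Lv \geq 0$ on $\Omega \cap B_r(x_0)$, so $v$ is a subsolution there and attains its maximum over $\overline{\Omega \cap B_r(x_0)}$ on the boundary, by the weak maximum principle for strong solutions (chapter 9 of \cite{GT}).

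Next I would analyze the boundary of $\Omega \cap B_r(x_0)$, which splits into two pieces: the part lying on $\partial B_r(x_0)$, and the part lying on $\partial\Omega \cap \overline{B_r(x_0)}$. On the latter piece $w = 0$, so $v = -\frac{1}{2n\Lambda}|x-x_0|^2 \leq 0 \leq v(x_0)$ provided $v(x_0) \geq 0$, which holds since $v(x_0) = w(x_0) > 0$. Hence the maximum of $v$ cannot be attained on $\partial\Omega$ (as $v(x_0) > 0$ strictly, and we can also note $x_0$ may not be in $\Omega$ if we did the limiting argument, but in the reduced case $x_0 \in \Omega$ so $v(x_0) > 0$), forcing the maximum to occur at some point $\bar x \in \partial B_r(x_0)$. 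Thus
\begin{equation}
\sup_{x \in B_r(x_0)} w(x) \geq w(\bar x) = v(\bar x) + \frac{r^2}{2n\Lambda} \geq v(x_0) + \frac{r^2}{2n\Lambda} \geq \frac{r^2}{2n\Lambda},
\label{eq:NDconclusion}
\end{equation}
which gives \refeqn{NonDeg} with $C = \frac{1}{2n\Lambda}$, depending only on $n$ and $\Lambda$ as claimed.

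The main obstacle — really the only subtle point — is justifying the use of the maximum principle on the open set $\Omega \cap B_r(x_0)$, whose boundary need not be smooth and whose structure is a priori unknown (this is exactly the free boundary we are trying to understand). The cleanest way around this is to invoke the Aleksandrov–Bakelman–Pucci estimate or the weak maximum principle for $W^{2,n}_{\mathrm{loc}}$ strong subsolutions, which applies on arbitrary bounded open sets and requires no regularity of the boundary — only that $v \in W^{2,n}_{\mathrm{loc}}(\Omega \cap B_r(x_0)) \cap C(\overline{\Omega \cap B_r(x_0)})$, which follows from our standing regularity hypothesis ($w \in W^{2,p}$ with $p$ large, hence continuous up to the boundary of $B_1$, and continuous on $\overline{B_r(x_0)}$ since $B_r(x_0) \subset\subset B_1$). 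One should also note that the closure of $\Omega \cap B_r(x_0)$ meets $\partial B_r(x_0)$ — this is automatic since $x_0 \in \overline\Omega$ and $\Omega$ is open, so $\Omega \cap B_r(x_0)$ is a nonempty open set whose closure, being connected to $x_0$, must reach the sphere. With these points addressed, the comparison argument and the boundary case analysis are routine.
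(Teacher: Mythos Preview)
Your proof is correct and follows essentially the same argument as the paper: reduce to $x_0 \in \Omega$ by continuity, form the auxiliary function $v(x) = w(x) - c|x-x_0|^2$ (the paper additionally subtracts $w(x_0)$, a cosmetic difference), check $Lv \geq 0$ on $\Omega \cap B_r(x_0)$, and apply the Aleksandrov weak maximum principle to force the maximum onto $\partial B_r(x_0)$. Your final remark that the closure of $\Omega \cap B_r(x_0)$ must meet $\partial B_r(x_0)$ is imprecisely justified (connectedness is not the reason) and in fact unnecessary --- if it failed, the maximum principle would already yield a contradiction with $v(x_0)>0$ --- but this does not affect the validity of the argument.
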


\begin{pf} By continuity we can assume that $x_0 \in \Omega.$  Define $\Omega_r := B_r(x_0)\cap \Omega,$ 
\ $\Gamma_1 := FB \cap B_r(x_0),$ and $\Gamma_2 := \partial B_r(x_0) \cap \Omega.$
Let $\gamma := \frac{1}{2n}||a^{ij}||_{L^{\infty}(\Omega_r)},$ and set
\begin{equation}
    v(x):= w(x) - w(x_0) - \gamma |x-x_0|^2 \;.
\label{eq:Vdef}
\end{equation}
\includegraphics[scale=0.85]{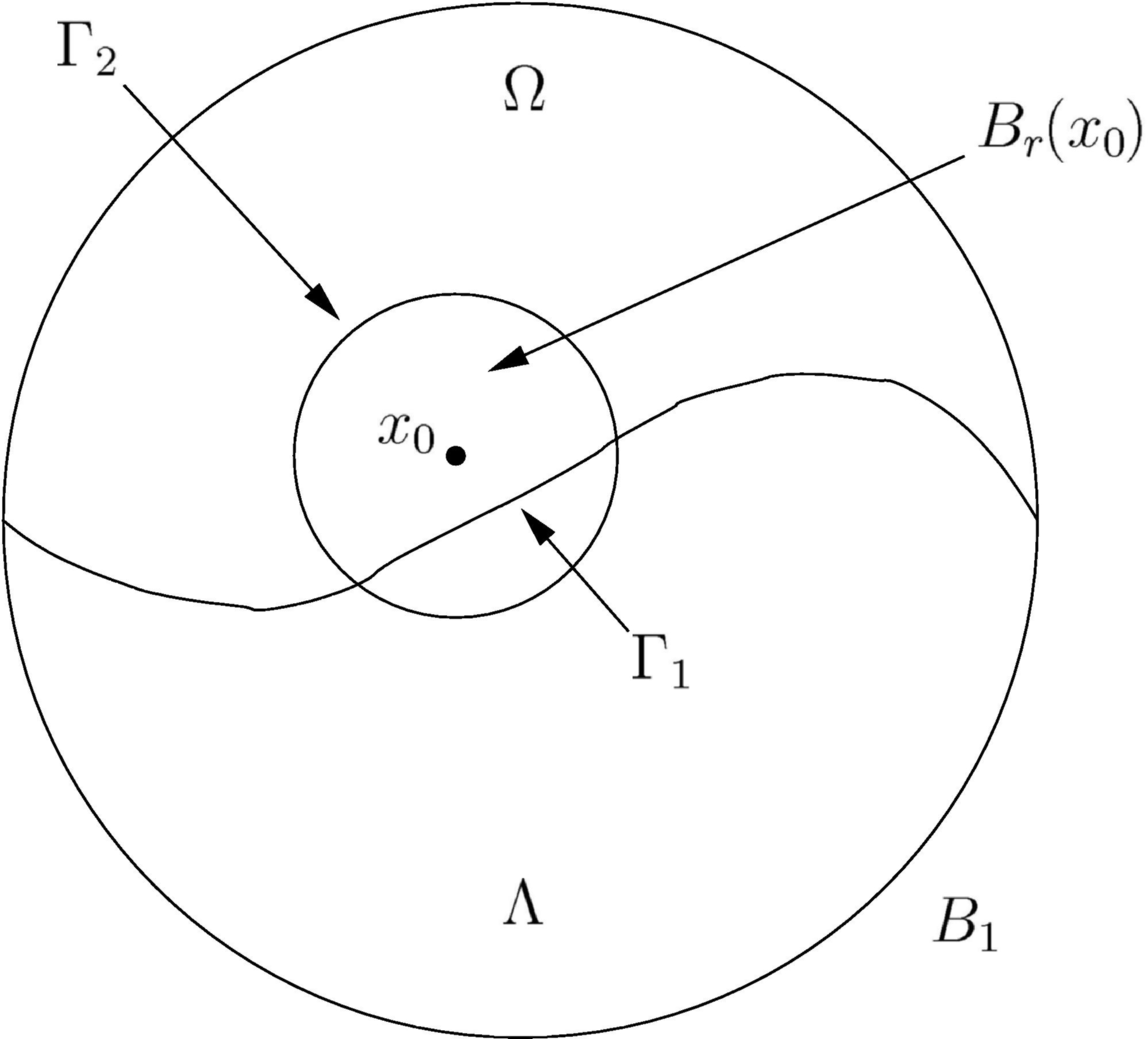} \newline \newline \noindent
Now for $x \in \Omega_r$ we compute:
\begin{alignat*}{1}
Lv &= a^{ij}D_{ij}w - a^{ij}D_{ij}(\gamma|x-x_0|^2) \\
   &= 1 - 2\gamma a^{ij}\delta_{ij} \\
   &= 1 - 2\gamma \sum a^{ii} \\
   &\geq 1 - 2n\gamma||a^{ij}||_{L^{\infty}(\Omega_r)} \\
   &\geq 0 \;.
\end{alignat*}
So now by observing that $v(x_0) = 0,$ by using the weak maximum principle of Aleksandrov
(see Theorem 9.1 of \cite{GT}), and by observing that $v \leq 0$ on $\Gamma_1$ we get
\begin{alignat*}{1}
0 &\leq \sup_{ \Omega_r } v \\
  &\leq \sup_{\partial \Omega_r} v^{+} \\
  &= \sup_{\Gamma_2} v \\
  &= \sup_{\Gamma_2} w - w(x_0)- \gamma r^{2} \\
  &\leq \sup_{\tclosure{B_r(x_0)}} w - w(x_0)- \gamma r^{2} \;.
\end{alignat*}
Now by rearranging terms and observing $w(x_0) \geq 0$ we are done.
\end{pf}

\begin{remark}[Nontrivial Solutions]  \label{nontrivsolns}
As a simple consequence of nondegeneracy, we can take Dirichlet data on $\partial B_1$ which is positive
but small everywhere, to guarantee that we have a solution to our problem which has a nontrivial zero set
and a nontrivial free boundary.  (The origin must be in the zero set in this case.)
\end{remark}

\begin{theorem}[Weak Comparison Principle] \label{WCP}
Let $w_k, \; k = 1,2$ solve\refeqn{BasicProb}\!.  If $w_1 \leq w_2 \leq w_1 + \epsilon$ on $\partial B_1,$
then $w_1 \leq w_2 \leq w_1 + \epsilon$ in $B_1.$
\end{theorem}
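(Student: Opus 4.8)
Proof proposal. The plan is to handle the two inequalities one at a time, but by the same mechanism: on the open set where the claimed inequality fails, the difference of the two solutions is a strong subsolution of the linear operator $L$ with boundary values $\le 0$, so Aleksandrov's weak maximum principle forces it to be nonpositive there, contradicting the definition of the set. The only place where the nonlinearity of Equation\refeqn{BasicProb}\! enters is the elementary observation that on each such ``contact'' set the larger of the two functions is strictly positive, so its characteristic function equals $1$ while the other is at most $1$; this is exactly what turns the difference into a subsolution, and it is the step I expect to require the most care in the write-up even though it is not a hard estimate.

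First I would prove $w_1 \le w_2$ in $B_1$. Set $v := w_1 - w_2$. Since $w_1,w_2$ are continuous (we are in the regime where $p$ is large enough for that), $U := \{v > 0\} = \{w_1 > w_2\}$ is open. Suppose $U \ne \emptyset$. On $U$ we have $w_1 > w_2 \ge 0$, hence $w_1 > 0$ there and $\chisub{\{w_1 > 0\}} \equiv 1$ on $U$; combining this with $0 \le \chisub{\{w_2 > 0\}} \le 1$ gives
$$ Lv = \chisub{\{w_1>0\}} - \chisub{\{w_2>0\}} = 1 - \chisub{\{w_2>0\}} \ge 0 \quad \text{a.e. in } U, $$
and the right-hand side is bounded, hence in $L^n(U)$. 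On $\partial U$ we have $v \le 0$: at points of $\partial U \cap B_1$ this holds by continuity ($v = 0$ there), and at points of $\partial U \cap \partial B_1$ it holds by hypothesis. Applying the weak maximum principle of Aleksandrov (see Theorem 9.1 of \cite{GT}) to the strong subsolution $v$ on the bounded open set $U$ yields $\sup_U v \le \sup_{\partial U} v^{+} = 0$, contradicting $v > 0$ on $U$. Hence $U = \emptyset$, i.e. $w_1 \le w_2$ in $B_1$.

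For the second inequality $w_2 \le w_1 + \epsilon$ in $B_1$, I would run the identical argument with $v := w_2 - w_1 - \epsilon$ and $V := \{v > 0\}$. On $V$ one has $w_2 > w_1 + \epsilon \ge 0$ (using $w_1 \ge 0$ and $\epsilon \ge 0$), so $w_2 > 0$ and $\chisub{\{w_2>0\}} \equiv 1$ on $V$; since $L(\epsilon) = 0$,
$$ Lv = L(w_2 - w_1) = \chisub{\{w_2>0\}} - \chisub{\{w_1>0\}} = 1 - \chisub{\{w_1>0\}} \ge 0 \quad \text{a.e. in } V, $$
and $v \le 0$ on $\partial V$ exactly as before. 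Aleksandrov's maximum principle again gives $\sup_V v \le 0$, contradicting $v > 0$ on $V$, so $V = \emptyset$ and $w_2 \le w_1 + \epsilon$ in $B_1$.

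In summary, there is no genuine analytic difficulty: the proof is a routine application of the linear maximum principle on a sub-domain with vanishing boundary data, just as in the proof of Theorem\refthm{NonDeg}\!\!, and the one point meriting attention is the sign bookkeeping that makes the nonlinear right-hand side cooperate on each contact set.
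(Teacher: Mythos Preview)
Your proof is correct and follows essentially the same mechanism as the paper's: on the set where the desired inequality fails, the relevant difference is an $L$-subsolution with nonpositive boundary values, so the Aleksandrov maximum principle (Theorem 9.1 in \cite{GT}) yields a contradiction. Two minor differences are worth noting: the paper works on a $\sigma$-neighborhood $S^m_\sigma$ of the maximum set rather than on all of $\{v>0\}$, and for the second inequality the paper introduces an auxiliary solution $w_3$ with boundary data $w_1+\epsilon$ and then compares $w_2\le w_3\le w_1+\epsilon$, whereas you compare $w_2$ with $w_1+\epsilon$ directly. Your route is slightly cleaner and, in particular, does not invoke the existence of $w_3$, which is an advantage in a section that explicitly avoids the VMO hypothesis needed for existence.
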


\begin{pf} Set $v:= w_1 - w_2,$ and suppose for the sake of obtaining a contradiction that
\begin{equation}
   \max_{x \in B_1} v(x) = v(x_0) = m > 0 \;.
\label{eq:posmaxofv}
\end{equation}
Now we let
\begin{equation}
S^m := \{ x | v(x) = m \} \;.
\label{eq:Smsetdef}
\end{equation}
Since $v$ is a continuous function, there exists a number $\sigma > 0,$ such that 
$v \geq m/2$ on the $\sigma$-neighborhood of $S^m.$  We will denote this set by
$S^m_{\sigma}.$  Now if $S^m_{\sigma}$ extends to the boundary
of the set $B_1,$ then we contradict the fact that $v \leq 0$ on $\partial B_1,$
and thus,
\begin{equation}
S^m_{\sigma} \subset \subset B_1, \ \ \text{and} \ \  v < m \ \text{on} \ \partial S^m_{\sigma} \;.
\label{eq:vlessmbdry}
\end{equation}
Now on this set, since $w_2 \geq 0,$ we must have that $w_1 \geq m/2 > 0.$
Thus, we have
\begin{equation}
Lv = Lw_1 - Lw_2 = 1 - Lw_2 \geq 0 \ \ \ \text{in} \ S^m_{\sigma}.
\label{eq:LisPos}
\end{equation}
By applying the ABP estimate (see \cite{GT} Theorem 9.1) we can conclude that
\begin{equation}
m = \max_{x \in S^m_{\sigma}} v(x) \leq \max_{x \in \partial S^m_{\sigma}} v(x) \;,
\label{eq:resofABP}
\end{equation}
but this equation contradicts the fact that $v < m$ on $\partial S^m_{\sigma}.$

Now we let $w_3$ denote the solution to\refeqn{BasicProb}with boundary data equal to
$w_1 + \epsilon.$  By the first part of the proof, we can conclude that $w_2 \leq w_3$ in
$B_1.$  It remains to show that $w_3 \leq w_1 + \epsilon.$  Suppose not.  Then the function
$u := w_3 - w_1 - \epsilon$ has a positive maximum, $m,$ at a point $x_1.$  Now after observing 
that $w_3(x) > 0$ in a neighborhood of where $u = m$ the proof is identical to the proof of the 
first part.
\end{pf}

\begin{corollary}[Uniqueness] \label{uniqueness}
Any solution to\refeqn{BasicProb}\! with fixed values on $\partial B_1$ is unique.
\end{corollary}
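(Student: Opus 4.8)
The plan is to deduce this directly from the Weak Comparison Principle, Theorem\refthm{WCP}\!\!, so there is essentially no new work to do. Suppose $w_1$ and $w_2$ both solve Equation\refeqn{BasicProb}and agree with a common function on $\partial B_1$. Then for every $\epsilon > 0$ we trivially have $w_1 \leq w_2 \leq w_1 + \epsilon$ on $\partial B_1$, so Theorem\refthm{WCP}gives $w_2 \leq w_1 + \epsilon$ throughout $B_1$; letting $\epsilon \downarrow 0$ yields $w_2 \leq w_1$ in $B_1$. Interchanging the roles of $w_1$ and $w_2$ produces the reverse inequality $w_1 \leq w_2$ in $B_1$, and hence $w_1 \equiv w_2$.

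I would also point out, in a sentence, that the limiting argument in $\epsilon$ is not even strictly necessary: the first half of the proof of Theorem\refthm{WCP}uses only the hypothesis $v := w_1 - w_2 \leq 0$ on $\partial B_1$ to conclude $v \leq 0$ in all of $B_1$ (the positive-maximum-set argument together with the ABP estimate never consumes any of the slack provided by $\epsilon$). Thus one may simply invoke that one-sided comparison twice, once for $w_1 - w_2$ and once for $w_2 - w_1$, to get $w_1 \equiv w_2$ at once.

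Since the corollary is an immediate formal consequence of a result already established in this section, I do not anticipate any real obstacle. The only subtlety worth flagging is the one just mentioned — that the comparison in Theorem\refthm{WCP}is genuinely one-sided in its first half — which is what makes the reduction to uniqueness completely clean; everything else is routine.
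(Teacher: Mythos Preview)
Your proposal is correct and matches the paper's approach exactly: the paper states this corollary immediately after Theorem\refthm{WCP}with no separate proof, treating uniqueness as an obvious consequence of the weak comparison principle. Your observation that the first half of the proof of Theorem\refthm{WCP}already gives the one-sided inequality $w_1 \leq w_2$ from $w_1 \leq w_2$ on $\partial B_1$ (so no $\epsilon$-limit is needed) is a nice clarification, but it is not a different route---just an unpacking of why the corollary is indeed immediate.
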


We also can improve our existence theorem easily now to deal with any continuous boundary data:

\begin{corollary}[Improved Existence Theorem] \label{ImpExist}
Assume Equation\refeqn{UniformEllip}holds, assume that $a^{ij} \in \text{VMO},$ and assume that $\psi$ is nonnegative and
continuous. Then there exists a nonnegative function
$w \in W^{2,p}_{loc}(B_1) \cap C^{0}(\ohclosure{B_1})$ $($for all $p \in (1, \infty))$ which satisfies:
\BVPb{a^{ij}(x) D_{ij} w(x) = \chisub{\{w > 0\}}(x)}{w(x) = \psi(x)}{AppProb3}
\end{corollary}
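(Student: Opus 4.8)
The plan is to obtain $w$ as a limit of solutions with regularized boundary data, using Theorem~\ref{MainExistence} for the approximate problems and the Weak Comparison Principle (Theorem~\ref{WCP}) together with the Uniqueness Corollary to pass to the limit. First I would approximate the given nonnegative continuous datum $\psi$ on $\partial B_1$ by a decreasing sequence $\psi_k$ of nonnegative functions, each the restriction to $\partial B_1$ of a function in $W^{2,p}(B_1)$ for all $p \in (1,\infty)$ (e.g.\ mollify an extension of $\psi$ to a neighborhood of $\closure{B_1}$ and add a small constant $1/k$ so that $\psi \le \psi_{k+1} \le \psi_k \le \psi + \omega(1/k)$ for the modulus of continuity $\omega$ of $\psi$). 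Let $w_k \in W^{2,p}(B_1)$ be the solution of\refeqn{AppProb3}with boundary data $\psi_k$, which exists by Theorem~\ref{MainExistence}.

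Next I would use Theorem~\ref{WCP}: since $0 \le \psi_{k+1} \le \psi_k \le \psi_{k+1} + \|\psi_k - \psi_{k+1}\|_{L^\infty(\partial B_1)}$ on $\partial B_1$, we get $w_{k+1} \le w_k \le w_{k+1} + \|\psi_k - \psi_{k+1}\|_{L^\infty(\partial B_1)}$ in $B_1$, so $\{w_k\}$ is monotone nonincreasing and uniformly Cauchy in $C^0(\ohclosure{B_1})$ (comparing $\psi_k$ and $\psi_m$ directly, $\|w_k - w_m\|_{L^\infty(B_1)} \le \|\psi_k - \psi_m\|_{L^\infty(\partial B_1)} \to 0$). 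Hence $w_k \to w$ uniformly on $\closure{B_1}$, with $w \ge 0$, $w = \psi$ on $\partial B_1$, and $w \in C^0(\ohclosure{B_1})$. Interior $W^{2,p}_{loc}$ bounds come from Theorem~\ref{CFLIR}: on $B_r \subset\subset B_1$ we have $\|w_k\|_{W^{2,p}(B_r)} \le C(\|w_k\|_{L^p(B_{r'})} + \|Lw_k\|_{L^p(B_{r'})}) \le C(\|w_k\|_{L^\infty} + 1)$, which is bounded uniformly in $k$ since $Lw_k = \chisub{\{w_k > 0\}}$ has $L^\infty$ norm at most $1$ and $\|w_k\|_{L^\infty} \le \|\psi\|_{L^\infty} + 1$. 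So on each $B_r$ a subsequence converges weakly in $W^{2,p}(B_r)$ and (by Sobolev embedding and uniqueness of the limit) strongly in $C^{1,\alpha}(\closure{B_r})$; the limit is $w$, giving $w \in W^{2,p}_{loc}(B_1)$.

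The main obstacle — exactly as in the proof of Theorem~\ref{MainExistence} — is showing that $Lw = \chisub{\{w > 0\}}$ a.e.; the issue is passing the characteristic function through the limit, since $\chisub{\{w_k > 0\}}$ need not converge to $\chisub{\{w > 0\}}$ without an argument. On $\{w > 0\}$ this is easy: uniform convergence gives $w_k > 0$ near such a point for large $k$, so $\chisub{\{w_k>0\}} \to 1$ there. On $\Lambda(w) = \{w = 0\}$ I would run the same contradiction argument as in Theorem~\ref{MainExistence}: if $\int_{\Lambda(w)} \chisub{\{w_k > 0\}}\,dx \ge \gamma > 0$ along a subsequence, then $\gamma \le \int_{\Lambda(w)} Lw_k = \int_{\Lambda(w)} a^{ij}D_{ij}w_k = \int_{\Lambda(w)} a^{ij}(D_{ij}w_k - D_{ij}w) + \int_{\Lambda(w)} a^{ij}D_{ij}w$; the second term vanishes because $w \equiv 0$ on $\Lambda(w)$ forces $D^2 w = 0$ a.e.\ there, and the first term tends to $0$ by the weak $W^{2,p}_{loc}$ convergence of $w_k$ to $w$ (here one restricts attention to $\Lambda(w) \cap B_r$ for fixed $r < 1$, which suffices since $0 \in \Lambda(w)$ and the estimate is local; on the thin shell near $\partial B_1$ one uses that $w = \psi$ may be positive, so $\Lambda(w)$ stays compactly inside $B_1$ up to a set where we already have convergence). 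Thus $\chisub{\{w_k>0\}} \to \chisub{\{w>0\}}$ a.e., and then $Lw_k \to \chisub{\{w>0\}}$ weakly in $L^p_{loc}$ while also $Lw_k \rightharpoonup Lw$ weakly in $L^p_{loc}$ (combining the weak $W^{2,p}_{loc}$ convergence with fixed continuous $a^{ij}$ — or, if $a^{ij}$ is only VMO, with the same $a^{ij}_\epsilon$-trick as in Theorem~\ref{MainExistence}, but here no regularization of $a^{ij}$ is needed since it is fixed); uniqueness of weak limits gives $Lw = \chisub{\{w > 0\}}$ a.e., completing the proof.
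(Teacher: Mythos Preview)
Your proof is correct and follows the same overall scaffold as the paper (approximate $\psi$ from above by smooth data, apply the existence theorem, use the Weak Comparison Principle to get uniform convergence, and use the interior Chiarenza--Frasca--Longo estimate for local $W^{2,p}$ control), but you handle the crucial step of verifying $Lw=\chi_{\{w>0\}}$ differently. The paper never tries to show $\chi_{\{w_k>0\}}\to\chi_{\{w>0\}}$: on $\{w=0\}$ it simply notes $D^2w=0$ a.e.\ so both sides of the PDE vanish automatically, and on the open set $\{w>0\}$ it applies the interior estimate to the \emph{difference} $w_n-w_m$ (which satisfies $L(w_n-w_m)=0$ once both are positive there) to upgrade uniform convergence to strong local $W^{2,p}$ convergence, yielding $a^{ij}D_{ij}w=1$ directly. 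Your route---recycling the contradiction argument from the main existence theorem to force a.e.\ convergence of the characteristic functions, then matching weak limits---is valid but longer, since the paper's trick bypasses the need to identify the limit of $\chi_{\{w_k>0\}}$ entirely. One small repair: your parenthetical that ``$\Lambda(w)$ stays compactly inside $B_1$'' is not justified when $\psi$ vanishes somewhere on $\partial B_1$; instead, given $\gamma>0$, first fix $r<1$ with $|B_1\setminus B_r|<\gamma/2$, so that $\int_{\Lambda(w)\cap B_r}\chi_{\{w_k>0\}}\ge\gamma/2$ along the subsequence, and then run the weak-convergence contradiction on $\Lambda(w)\cap B_r\subset\subset B_1$ where the local $W^{2,p}$ convergence is available.
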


\noindent
The first equality is understood to be in an almost everywhere sense.

\begin{pf} We extend $\psi$ to be a nonnegative continuous function on all of $\ohclosure{B_2}.$  Next we take
$\psi_n \in C^{\infty}(\ohclosure{B_2})$ which are nonnegative and satisfy $$\psi \leq \psi_n \leq \psi + \frac{1}{2^n}\;.$$
By Theorem\refthm{WCP}we get uniform convergence of the corresponding solutions (which we call $w_n$) to a continuous
nonnegative function, $w,$ on all of $\ohclosure{B_1}$  and in fact, we have the estimate
\begin{equation}
w \leq w_n \leq w + \frac{1}{2^n} \;.
\label{eq:wnvsw}
\end{equation}

It is a basic fact from real analysis that on the set $\{ w = 0 \}$ we have $D_{ij}w = 0$ almost
everywhere.  In particular, the equation $$a^{ij}(x) D_{ij} w(x) = 0$$ holds almost everywhere on this
set automatically.  Now because $w$ is continuous, the set where it is positive is an open set, and so we
can suppose that $B_{2r}(x_0) \subset \{ w > 0 \}.$  It follows from
Theorem\refthm{CFLIR}that in $B_{r}(x_0)$ we will have $L^p$ convergence of the second derivatives
$D_{ij}w_n \rightarrow D_{ij}w.$  After taking a subsequence we have convergence almost everywhere, and
so we must have $a^{ij}D_{ij}w = 1$ almost everywhere in $\{ w > 0 \}.$

Finally, in order to get $w \in W^{2,p}_{loc}(B_1)$ we simply observe that Theorem\refthm{CFLIR}will
imply that for any $D \subset \subset B_1,$ and for any $p \in (1, \infty),$ we know that $w_n$ are all
bounded in $W^{2,p}(D)$ and so we can get a subsequence to converge weakly in $W^{2,p}(D)$ to a
function which must therefore be our function $w.$
\end{pf}

\begin{lemma}[Bound on $B_{1/2}$] \label{bound}
If $w \geq 0 $ satisfies Equations\!\refeqn{BasicProb}\!and\!\refeqn{OinFB}\!, then
$ w(x) \leq C(n,\lambda, \Lambda)$ in $\tclosure{B_{1/2}}.$
\end{lemma}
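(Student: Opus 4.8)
The plan is to use a barrier argument based on the ABP/Aleksandrov maximum principle, exploiting the fact that $0 \in \partial\{w>0\}$ forces $w(0)=0$ and hence gives us a point where $w$ vanishes. First I would observe that $w$ is a strong solution, so on $\Omega(w) = \{w>0\}$ we have $a^{ij}D_{ij}w = 1$ a.e., while on $\Lambda(w)$ we have $D^2 w = 0$ a.e.; in particular $\|a^{ij}D_{ij}w\|_{L^\infty(B_1)} \le 1$ (with the convention that the equation holds a.e. and $Lw = \chi_{\{w>0\}}$ is bounded by $1$). The strategy is then to compare $w$ on the ball $B_{3/4}$ (say) with the solution $h$ of the \emph{linear} Dirichlet problem $Lh = 1$ in $B_{3/4}$, $h = w$ on $\partial B_{3/4}$. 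Since $L(w-h) = 0$ a.e. on $\{w>0\}\cap B_{3/4}$ and $\ge 0$ a.e. everywhere in $B_{3/4}$ (because $Lw \le 1 = Lh$), Aleksandrov's maximum principle (Theorem 9.1 of \cite{GT}) gives $w \le h$ in $B_{3/4}$, so it suffices to bound $h$ on $B_{1/2}$.

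The function $h$ splits as $h = h_1 + p$, where $p$ is a fixed particular solution of $Lp = 1$ (for instance $p(x) = \frac{|x|^2}{2\lambda}$ works as a supersolution since $a^{ij}D_{ij}p = \frac{1}{\lambda}\sum a^{ii} \ge n \ge 1$, so one adjusts to an exact particular solution or simply uses $p$ as a supersolution to dominate $h$), and $h_1$ is $L$-harmonic with boundary data $w|_{\partial B_{3/4}} - p|_{\partial B_{3/4}}$. The particular part is bounded by a constant $C(n,\lambda)$ on $B_{1/2}$ directly. For the harmonic part, the key point is that $w(0) = 0$: by the Harnack inequality for nonnegative solutions of $Lu = 0$ in nondivergence form (Krylov--Safonov, see \cite{GT} Chapter 9 or \cite{CC}), applied to the nonnegative function $h - \min_{\partial B_{3/4}}(\ldots)$, we control $\sup_{B_{1/2}} h_1$ by $C(n,\lambda,\Lambda)$ times its value at the origin plus oscillation terms. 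More cleanly: since $0 \le w \le h$ and $h(0) \le w(0) + (\text{particular part at }0) = C(n,\lambda)$ would follow if $h(0)$ were controlled, but $h(0)$ is controlled precisely because $w \ge 0$ forces the boundary data of $h_1$ to be, up to the explicit function $p$, of one sign, and then the Krylov--Safonov Harnack inequality bounds $\sup_{B_{1/2}} h_1 \le C(n,\lambda,\Lambda)\, h_1(0)$, while $h_1(0) = h(0) - p(0) = w(0) - p(0) + (h(0)-w(0)) \le -p(0) \le 0$ is \emph{not} what we want — so instead one applies Harnack to the nonnegative quantity $M - h_1$ where $M = \max_{\partial B_{3/4}} h_1$, obtaining $\sup_{B_{1/2}}(M - h_1) \le C(M - h_1(0))$, i.e. $\inf_{B_{1/2}} h_1 \ge M - C(M - h_1(0))$; combined with $h_1 \ge 0$ being false in general, the right normalization is to use both that $h \ge 0$ (since $w \ge 0$ and $h \ge w \ge 0$... wait, $h \ge w$, and $w \ge 0$, so $h \ge 0$) and that $h(0) \le w(0) + \text{err}$. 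The honest version: $h \ge 0$ on $B_{3/4}$, $h(0) = $ (value we must bound), and Harnack gives $\sup_{B_{1/2}} h \le C(n,\lambda,\Lambda)(\inf_{B_{1/2}} h + \|Lh\|_{L^n(B_{3/4})}) \le C(n,\lambda,\Lambda)(h(0) + 1)$, so everything reduces to bounding $h(0)$.

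To bound $h(0)$: $h(0) - p(0) = h_1(0)$ is the value at the center of an $L$-harmonic function, and $w(0) = 0$ with $w \le h$ gives $h(0) \ge 0$; for the upper bound, $h(0) = w(0) + [h(0) - w(0)]$, and $h - w$ satisfies $L(h-w) = 1 - \chi_{\{w>0\}} \ge 0$ a.e. with $h - w = 0$ on $\partial B_{3/4}$, so by ABP $h - w \le 0$... which would give $h \le w$, contradicting $h \ge w$ unless $h = w$. I think the correct comparison direction is the other one, and the cleanest route is simply: let $h$ solve $Lh = 0$ in $B_{3/4}$ with $h = w$ on $\partial B_{3/4}$, and note $L(w - h) = \chi_{\{w>0\}} \ge 0$, so $w - h \le 0$ by ABP, i.e. $0 \le w \le h$; then $\sup_{B_{1/2}} w \le \sup_{B_{1/2}} h \le C(n,\lambda,\Lambda) h(0)$ by the Krylov--Safonov Harnack inequality applied to the nonnegative $L$-harmonic $h$; and $h(0) = w(0) + [h(0) - w(0)]$ where now $L(h - w) = -\chi_{\{w>0\}} \le 0$, $h - w = 0$ on the boundary, so ABP gives $h - w \ge 0$ consistent with the above, and $h(0) - w(0) = h(0) \le \|h - w\|_{L^\infty} \le C \|\chi_{\{w>0\}}\|_{L^n(B_{3/4})} \le C(n) \cdot |B_{3/4}|^{1/n} = C(n)$ by the ABP estimate. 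Therefore $h(0) \le w(0) + C(n) = C(n)$, and $\sup_{B_{1/2}} w \le C(n,\lambda,\Lambda)$, as claimed. The main obstacle is getting the comparison directions and the normalization in the Harnack step exactly right, and making sure the ABP estimate is applied with the correct constant depending only on $n,\lambda,\Lambda$ and the radius; none of the individual ingredients is deep, but the bookkeeping of which auxiliary function is nonnegative and which maximum principle applies requires care.
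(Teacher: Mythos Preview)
Your final argument---take $h$ with $Lh=0$ in $B_{3/4}$ and $h=w$ on $\partial B_{3/4}$, use ABP on $h-w$ to get $h(0)\le C(n,\lambda)$, then Krylov--Safonov Harnack on the nonnegative $L$-harmonic $h$ to bound $\sup_{B_{1/2}}w\le\sup_{B_{1/2}}h\le C\,h(0)$---is correct and is exactly the paper's proof: the paper writes $w=w_1+w_2$ with $Lw_2=0$, $w_2=w$ on $\partial B_1$, so your $h$ is their $w_2$ and your $h-w$ is their $-w_1$. The exploratory detours through the $Lh=1$ comparison and the misapplied Harnack steps should be cut; the clean version is three lines.
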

\begin{pf}
Write $w:= w_1 + w_2$, where

\BVPbc{Lw_1 = \chisub{ \{ w > 0 \} }}{w_1 \equiv 0}{Bw1def}
and
\BVPb{Lw_2 = 0}{w_2 = w}{Bw2def}
Then $w_1 \leq 0$ in $B_1$ by the maximum principle.  On the other hand, by the ABP estimate
(Theorem 9.1 \cite{GT}) we have, $w_1|_{_{B_1}} \geq -C.$ Also, by Corollary 9.25 \cite{GT}, along with the fact that
$w_1(0)  + w_2(0) = w(0) = 0$ we have:
$$w_2|_{_{B_{\frac{1}{2}}}} \leq \ \sup_{B_{\frac{1}{2}}} w_2 \leq \ C\inf_{B_{\frac{1}{2}}}w_2 \leq \ Cw_2(0)= \ -Cw_1(0) \leq C.$$
Hence $w|_{_{B_{\frac{1}{2}}}} \leq C.$
\end{pf}

\begin{theorem}[Parabolic Bound] \label{parabound}
If $w \geq 0 $ satisfies\!\refeqn{BasicProb}\!and\!\refeqn{OinFB}\!\!, then
$$w(x) \leq 4C(n,\lambda, \Lambda)|x|^2 \ \text{in} \ B_{1/2},$$
where the constant $C(n,\lambda, \Lambda)$ is the exact same constant
as the constant appearing in the statement of the previous lemma.
\end{theorem}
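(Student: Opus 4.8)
The plan is to deduce the quadratic bound in $B_{1/2}$ from Lemma~\ref{bound} by a rescaling argument. The key observation is that if $0 \in \partial\{w>0\}$ and $w$ solves \refeqn{BasicProb}, then for any $r \in (0,1)$ the rescaled function $w_r(x) := r^{-2}w(rx)$ also solves an equation of the same type, namely $a^{ij}(rx)D_{ij}w_r(x) = \chisub{\{w_r > 0\}}(x)$ in $B_1$, and it still vanishes at the origin and has $0$ in its free boundary. The coefficients $\tilde a^{ij}(x) := a^{ij}(rx)$ still satisfy \refeqn{UniformEllip} with the exact same $\lambda, \Lambda$ (ellipticity is scale-invariant), so the hypotheses of Lemma~\ref{bound} hold for $w_r$ with a constant $C$ that does not depend on $r$. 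Applying Lemma~\ref{bound} to $w_r$ gives $w_r \leq C(n,\lambda,\Lambda)$ on $\closure{B_{1/2}}$, which unwinds to $w(y) \leq C(n,\lambda,\Lambda)\, r^2$ for all $y \in \closure{B_{r/2}}$.

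**Next I would** convert this into the stated pointwise estimate on all of $B_{1/2}$. Given any $x \in B_{1/2}$ with $x \neq 0$, choose $r := 2|x|$, which lies in $(0,1)$; then $x \in \partial B_{r/2} \subset \closure{B_{r/2}}$, so the previous bound yields $w(x) \leq C(n,\lambda,\Lambda)\, r^2 = 4C(n,\lambda,\Lambda)|x|^2$. For $x = 0$ the inequality is trivial since $w(0) = 0$. This is exactly the claimed bound, and the constant is literally the same $C(n,\lambda,\Lambda)$ produced by Lemma~\ref{bound}, as the statement asserts.

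**The main point to check carefully** — really the only nonroutine step — is that the rescaling genuinely preserves every hypothesis of Lemma~\ref{bound} uniformly in $r$. One must confirm: (i) the PDE \refeqn{BasicProb} transforms correctly under $w \mapsto w_r$ (the factor $r^{-2}$ on $w$ exactly cancels the two factors of $r$ from the chain rule on $D_{ij}$, and on the right-hand side $\chisub{\{w_r>0\}}(x) = \chisub{\{w>0\}}(rx)$); (ii) the ellipticity constants $\lambda, \Lambda$ are unchanged, since $\tilde a^{ij}(x)\xi_i\xi_j = a^{ij}(rx)\xi_i\xi_j \in [\lambda|\xi|^2, \Lambda|\xi|^2]$ for every $x$; and (iii) $0 \in \partial\{w_r > 0\}$, which follows immediately from $0 \in \partial\{w>0\}$ since $\{w_r > 0\} = r^{-1}\{w > 0\}$ is just a dilation. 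Crucially, Lemma~\ref{bound} was proved using only \refeqn{UniformEllip} (via the ABP estimate and the Harnack inequality, Corollary~9.25 of \cite{GT}) and does \emph{not} invoke any VMO regularity of the coefficients, so no modulus-of-continuity quantity enters and the constant really is uniform across the whole family $\{w_r\}_{0<r<1}$. Once those checks are in place the proof is a one-line unwinding of the rescaling; there is no serious obstacle beyond being careful that nothing in $C$ secretly depends on $r$.
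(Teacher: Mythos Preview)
Your proof is correct and is essentially the same as the paper's: both apply Lemma~\ref{bound} to the quadratic rescaling $w_r(x)=r^{-2}w(rx)$ with $r=2|x|$ and then unwind. The only cosmetic difference is that the paper phrases the argument by contradiction, whereas you do it directly.
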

\begin{pf} Suppose not. Then, $w(\tilde{x}) > 4C(n,\lambda, \Lambda)|\tilde{x}|^2$ for some 
$\tilde{x} \in {B_{1/2}},$ and since $0 \in FB,$ we must have $\tilde{x} \neq 0.$
Now set $\lambda := 2|\tilde{x}|$ so that if $x := \lambda^{-1} \tilde{x},$
then we have $x \in \partial B_{1/2}.$  Define:
\begin{equation}
 w_{\lambda}(x):= {\lambda}^{-2}w(\lambda x) \;.
\label{eq:scaling}
\end{equation}
Clearly $w_{\lambda}$ satisfies\!\refeqn{BasicProb}\!and\!\refeqn{OinFB}\!in $B_1.$ So by the lemma above:
\begin{equation}
w_{\lambda}(x) \leq C(n,\lambda, \Lambda) \ \ \text{in} \ \tclosure{B_{1/2}}.
\label{eq:boundonscaling}
\end{equation}
On the other hand, $${\lambda}^2w_{\lambda}(x) =w(\lambda x)= w(\tilde{x}) 
         > 4C(n,\lambda, \Lambda)|\tilde{x}|^2= C(n,\lambda, \Lambda){\lambda}^2,$$
and so
\begin{equation}
w_{\lambda}(x) >  C(n,\lambda, \Lambda) \;,
\label{contradtobound}
\end{equation}
which contradicts Equation\!\refeqn{boundonscaling}\!.
\end{pf}


\newsec{Compactness and Measure Stability}{ARMS}
So far, except to prove our existence theorem, we have not made any assumptions about our
$a^{ij}$ beyond ellipticity.  In order to prove regularity theorems about the free boundary in the
next section, we will need to assume once again that the $a^{ij} \in \text{VMO}.$  In this section,
on the other hand, we will not assume
$a^{ij} \in \text{VMO},$ but many of our hypotheses anticipate that assumption later.
Now we need a technical compactness lemma which we will need to prove measure stability in this section and which we
will use again when we prove the existence of blow up limits in the next section.
%
\begin{lemma}[Basic Compactness Lemma]  \label{BCL}
Fix $\gamma > 0, \; 1 < p < \infty$ and let $\sigma(r)$ be a modulus of continuity.  Assume that we are given the following:
\begin{enumerate}
   \item $0 < \lambda I \leq a^{ij,k}(x) \leq \Lambda I,$ for a.e. $x.$ 
   \item $w_k \geq 0$ with $L^kw_k := a^{ij,k}D_{ij}w_k = \chisub{\{w_k > 0 \}} \ \text{in} \  B_1.$ 
   \item $0 \in \text{FB}_k, \; \text{so} \; w_k(0) = | \nabla w_k(0)| = 0.$ 
   \item $||w_k||_{W^{2,p}(B_{1})} \leq \gamma.$
   \item $A^{ij}$ is a symmetric, constant matrix with $0 < \lambda I \leq A^{ij} \leq \Lambda I,$ and such that
$||a^{ij,k} - A ^{ij}||_{L^1(B_1)} < \sigma(1/k).$ 
\end{enumerate}
Then for any $\alpha < 1$ and any $p < \infty$ there exists a function
$w_{\infty} \in W^{2,p}(B_{1}) \cap C^{1,\alpha}(\dclosure{B_{1}})$ and a subsequence of the $w_k$
(which we will still refer to as $w_k$ for ease of notation) such that
\begin{itemize}
   \item[A.] $w_k \rightarrow w_{\infty}$ strongly in $C^{1,\alpha}(\dclosure{B_{1}}),$
   \item[B.] $w_k \rightharpoonup w_{\infty}$ weakly in $W^{2,p}(\dclosure{B_{1}}),$ and
   \item[C.] $A^{ij}D_{ij} w_{\infty} = \chisub{\{w_{\infty} > 0 \}} \ \ \ \text{and} \ \ \
            0 \in FB_{\infty} := \partial \{w_{\infty} = 0\} \cap B_{1} \;.$
\end{itemize}
\end{lemma}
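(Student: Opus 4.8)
The plan is to run a standard compactness argument, extracting a limit from the bounds in hypotheses (1)--(5) and then passing to the limit in the PDE, with the only genuine subtlety being the nondegeneracy input needed to show $0$ remains on the free boundary of the limit.

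First I would extract the subsequence. By hypothesis (4) the $w_k$ are uniformly bounded in $W^{2,p}(B_1)$, so by reflexivity a subsequence converges weakly in $W^{2,p}(B_1)$ to some $w_\infty$; by the compact Sobolev embedding $W^{2,p}(B_1) \hookrightarrow\hookrightarrow C^{1,\alpha}(\dclosure{B_1})$ (for $p$ large, $\alpha<1-n/p$, and a further diagonalization to cover all $\alpha<1$) the same subsequence converges strongly in $C^{1,\alpha}(\dclosure{B_1})$. This gives (A) and (B), and also $w_\infty\ge 0$ from the pointwise convergence, and $w_\infty(0)=|\nabla w_\infty(0)|=0$ from the $C^1$ convergence together with hypothesis (3). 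It remains to prove (C).

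For the PDE in (C), I would split, for an arbitrary $g\in L^{p'}(B_1)$,
\begin{equation*}
\int_{B_1} (a^{ij,k}D_{ij}w_k)\,g\,dx = \int_{B_1}(a^{ij,k}-A^{ij})D_{ij}w_k\,g\,dx + \int_{B_1}A^{ij}(D_{ij}w_k)g\,dx,
\end{equation*}
exactly as in the proof of Theorem \ref{MainExistence}. The second term converges to $\int_{B_1}A^{ij}(D_{ij}w_\infty)g\,dx$ by the weak $W^{2,p}$ convergence (take $p=3$, say, so $A^{ij}g\in L^{3/2}$). For the first term, $|a^{ij,k}-A^{ij}|\le 2\Lambda$ is bounded, so it $\to 0$ using $\|a^{ij,k}-A^{ij}\|_{L^1(B_1)}<\sigma(1/k)\to 0$ together with the uniform $L^p$ bound on $D_{ij}w_k$ and the dominated convergence / Hölder argument (interpolating the $L^1$ smallness with the $L^\infty$ bound to get $L^q$ smallness for the relevant exponent). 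Hence $a^{ij,k}D_{ij}w_k \rightharpoonup A^{ij}D_{ij}w_\infty$ weakly. On the other side, $\chisub{\{w_k>0\}} = a^{ij,k}D_{ij}w_k$ is bounded in every $L^q$, so a further subsequence converges weakly to some $h$ with $0\le h\le 1$; I must identify $h=\chisub{\{w_\infty>0\}}$. On $\{w_\infty>0\}$, $C^0$ convergence forces $w_k>0$ eventually, so $\chisub{\{w_k>0\}}\to 1$ pointwise there; on the open set $\{w_\infty>0\}$ this gives $h=1$ a.e. On $\Lambda(w_\infty)=\{w_\infty=0\}$, since $w_\infty\ge 0$ and $w_\infty=0$ there, $D_{ij}w_\infty=0$ a.e., so $A^{ij}D_{ij}w_\infty=0$ a.e. on $\Lambda(w_\infty)$, and thus $h=0$ a.e. there. (Equivalently, one can repeat the integral estimate from Theorem \ref{MainExistence} verbatim to rule out mass of $\chisub{\{w_k>0\}}$ accumulating on $\Lambda(w_\infty)$.) Since $|FB(w_\infty)|=0$ (the zero level set of a nonconstant $W^{2,p}$ solution has measure zero, again by $D^2w_\infty=0$ on it combined with the equation), we conclude $h=\chisub{\{w_\infty>0\}}$ a.e., giving the equation in (C).

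The main obstacle is the remaining half of (C): showing $0\in FB_\infty$, i.e. that $0\in\partial\{w_\infty=0\}$, so the origin does not disappear into the interior of the zero set. This is where nondegeneracy (Theorem \ref{NonDeg}) enters: since $0\in\overline{\Omega(w_k)}$ for each $k$, we have $\sup_{B_r}w_k\ge Cr^2$ with $C=C(n,\Lambda)$ independent of $k$, and passing to the $C^0$ limit gives $\sup_{B_r}w_\infty\ge Cr^2>0$ for every $r\le 1$, so $w_\infty\not\equiv 0$ near the origin and every ball $B_r(0)$ meets $\{w_\infty>0\}$. Combined with $w_\infty(0)=0$, which says $0\in\overline{\Lambda(w_\infty)}$, this yields $0\in\partial\Omega(w_\infty)\cap\partial\Lambda(w_\infty)=FB_\infty$, completing the proof. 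I would also remark that the uniform constant in Theorem \ref{NonDeg} depends only on $n$ and $\Lambda$ — not on the $a^{ij,k}$ or on $w_k$ — which is precisely what makes this step legitimate under the stated hypotheses.
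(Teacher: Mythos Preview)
Your argument is correct, and for the identification of the limiting PDE you have actually found a shorter route than the paper. The paper proves that $\chi_{\{w_k>0\}}\to\chi_{\{w_\infty>0\}}$ \emph{pointwise a.e.}: equal to $1$ on $\{w_\infty>0\}$ by uniform convergence, equal to $0$ on the \emph{interior} of $\{w_\infty=0\}$ by a nondegeneracy argument (if $w_k>0$ somewhere in a ball contained in $\{w_\infty=0\}$, nondegeneracy forces a large value nearby, contradicting uniform convergence), and then a porosity argument is used to show that the leftover set $\partial\{w_\infty=0\}$ has Lebesgue measure zero. Your approach bypasses the last two steps entirely: since the weak limit $h$ of $\chi_{\{w_k>0\}}$ equals $A^{ij}D_{ij}w_\infty$ by uniqueness of weak limits, and $D^2w_\infty=0$ a.e. on all of $\{w_\infty=0\}$ (not just its interior), you get $h=0$ a.e. on the full zero set for free. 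This is the same device used in the existence proof (Theorem~\ref{MainExistence}); the paper did not reuse it here. The trade-off is that the paper's longer argument yields, as a byproduct, the strong porosity of $FB(w_\infty)$ and hence Corollary~\ref{HDFB}; your shortcut does not.

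One minor correction: your sentence ``Since $|FB(w_\infty)|=0$ (the zero level set of a nonconstant $W^{2,p}$ solution has measure zero\ldots)'' is both unnecessary and garbled. It is unnecessary because you have already shown $h=0$ a.e. on the \emph{whole} set $\{w_\infty=0\}$, so there is no residual set to worry about. And the parenthetical justification is wrong as stated: the zero set $\Lambda(w_\infty)$ can certainly have positive measure (that is the coincidence set). Simply delete that sentence. The rest, including your use of nondegeneracy to keep $0\in FB_\infty$, is fine and matches the paper.
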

\begin{pf}
By using the fourth assumption, we immediately have both A and B from elementary functional analysis and the Sobolev
Embedding Theorem.  We also note that our assumptions of uniform ellipticity actually force a uniform $L^{\infty}$ bound
on all of the $a^{ij,k}$ and the $A^{ij}.$  That bound, together with the fact that
$a^{ij,k}\stackrel{L^1}{\rightarrow} A^{ij},$ allow us to interpolate to any strong convergence in $L^q.$  In other words,
by using the fact that $$||u||_{L^q} \leq ||u||_{L^1}^{(1/q)} \cdot ||u||_{L^{\infty}}^{(1 - (1/q))}$$
(see for example Equation (7.9) in \cite{GT}), we can assert that for $q < \infty$ we have
$a^{ij,k}\stackrel{L^q}{\rightarrow} A^{ij}.$  From this equation it follows that for any $\varphi \in L^{\infty}$ we have
\begin{equation}
a^{ij,k}\varphi \stackrel{L^q}{\rightarrow} A^{ij}\varphi \;.
\label{eq:lqconvofaijkphi}
\end{equation}

\begin{remark}[A Possible Improvement]  \label{API}
It seems to be worth observing that if we were to assume that the $a^{ij,k} \in \text{VMO}$ and we removed the
assumption of uniform ellipticity, then we could still use the theorem of John and Nirenberg to get strong convergence
in $L^q.$  On the other hand, too many of the other proofs rely on the uniform ellipticity of the elliptic operators for us
to tackle this issue in the current paper.
\end{remark}
Returning to the proof and letting $S$ be an arbitrary subset of $B_{1}$ we have
\begin{alignat*}{1}
\int_S {a^{ij,k}D_{ij}w_k} &= \int_S{(a^{ij,k}D_{ij}w_k - A^{ij}D_{ij}w_k + A^{ij}D_{ij}w_k)}\\
&= \int_S{(a^{ij,k} - A^{ij})D_{ij}w_k} + \int_S{ (A^{ij}D_{ij}w_k - A^{ij}D_{ij}w_{\infty} + A^{ij}D_{ij}w_{\infty})}\\
&= \int_S{(a^{ij,k} - A^{ij})D_{ij}w_k} + \int_S{ A^{ij}(D_{ij}w_k - D_{ij}w_{\infty})}+ \int_S{A^{ij}D_{ij}w_{\infty}} \\
&= I + I\!I + \int_S{A^{ij}D_{ij}w_{\infty}}.
\end{alignat*}
The integral $I$ now goes to zero by combining Equation\refeqn{lqconvofaijkphi}with the fourth assumption and then
using \Holder\!'s inequality.  The integral $I\!I$ goes to zero by using B.  Thus we can conclude
\begin{equation}
\int_S a^{ij,k}D_{ij}w_k \rightarrow \int_S A^{ij}D_{ij}w_{\infty}
\label{eq:gottheleft}
\end{equation}
for arbitrary $S \subset B_{1},$ and in particular, the convergence is also pointwise a.e.

Now we claim: $\chisub{\{w_k > 0 \}} \rightarrow \chisub{\{w_{\infty} > 0 \}} \  \text{a.e in} \  B_1.$  Since we
already know that $a^{ij,k}D_{ij}w_k \rightarrow A^{ij}D_{ij}w_{\infty}$ a.e. and since 
$a^{ij,k}D_{ij}w_k = \chisub{\{w_k > 0 \}}$  a.e.,
if we show our claim, then it will immediately imply that 
\begin{equation}
A^{ij}D_{ij}w_{\infty} =  \chisub{\{w_{\infty} > 0 \}} \ \ \text{a.e.}
\label{eq:dontoverlookthis}
\end{equation}
Since we obviously have $||\chisub{\{w_k > 0 \}}||_{L^p(B_1)} \leq C$ for all $p \in (1,\infty],$ elementary functional analysis
implies the existence of a function $g \in L^{\infty}(B_1)$ with $0 \leq g \leq 1$ such that
\begin{equation}
\chisub{\{w_k > 0 \}} \rightharpoonup g \ \text{in}\  L^p , 1 < p < \infty \;.
\label{eq:weakconv}
\end{equation}
Now, wherever we had $w_{\infty} > 0,$ it is immediate that $\chisub{\{w_k > 0 \}}$ converges pointwise (and therefore weakly)
to $1$ by the uniform convergence of $w_k$ to $w_{\infty}.$  In particular, $g \equiv 1$ on $\{ w_{\infty} > 0 \}.$

Next we show that $g \equiv 0$ \ in $ \{w_{\infty} =0\}^\circ.$  So, we suppose that $\qclosure{B_r(x_0)}\  \subset \{w_{\infty} = 0\},$
and we claim that $w_k \equiv 0$ in $ \pclosure{B_{r/2}(x_0)}$ for $k$ sufficiently large.  Suppose not. Then applying
Theorem\refthm{NonDeg}(the nondegeneracy result) to the offending $w_k$'s, we have a sequence
$\{x_k\} \subset \qclosure{B_r(x_0)}$ such that $w_k(x_k) \geq C({r/2})^2.$ 
On the other hand, $w_{\infty}(x_k) \equiv 0$ (since $\qclosure{B_r(x_0)} \; \subset \{w_{\infty} = 0\}$) and this fact
contradicts the uniform convergence of $w_k$ to $w_{\infty}.$

At this point we have $g(x) \equiv 1$ for $x \in \{ w_{\infty} > 0 \},$ and $g(x) \equiv 0$ for $x \in \{w_{\infty}=0\}^\circ $ and so
$g$ agrees with $\chisub{\{w_{\infty} > 0\}}$ on this set.  By the arguments above, the convergence to $g$ is actually
pointwise on this set.  Now we finish this proof by showing that
the set $\mathcal{P} := \{x: |\chisub{\{w_{\infty} > 0\}} - g| \neq 0\}$ has measure zero, and it follows from the preceding
arguments that $\partial \{w_{\infty} = 0\} \subset \mathcal{P} \;.$

We will show that $\mathcal{P}$ has measure zero by showing that it has no Lebesgue points.  To this end, let
$x_0 \in \mathcal{P}$ and let $r$ be positive, but small enough so that $B_r(x_0) \subset B_{1}.$
Define $W_{\infty}(x):= r^{-2}w_{\infty}(x_0 + rx)$ and define $W_j(x):= r^{-2}w_j(x_0 + rx),$ and observe that
all of the convergence we had for $w_j$ to $w_{\infty}$ carries over to convergence for $W_j$ to $W_{\infty},$ except that
now everything is happening on $B_1.$

From our change of coordinates, it follows that $0 \in \partial \{ W_{\infty} = 0 \}$ and since $W_{\infty} \geq 0,$ there exists a
sequence $\{x_k\} \rightarrow 0$ such that $W_{\infty}(x_k) > 0$ for all $k.$  Now fix $k$ so that $x_k \in B_{1/8},$ and
then take $J$ sufficiently large to ensure that if $i,j \geq J$ then the following hold:
\begin{equation}
    ||W_{j} - W_{\infty}||_{L^{\infty}(B_1)} \leq \frac{W_{\infty}(x_k)}{2} \;, \ \ \ \text{and} \ \ \ 
    ||W_{i} - W_{j}||_{L^{\infty}(B_1)} \leq \frac{\tilde{C}}{10}
\label{eq:UseCauchyI}
\end{equation}
where $\tilde{C}$ is a constant which will be determined from the nondegeneracy theorem, and which will be named
momentarily.  The existence of such a $J$ follows from the fact that $W_j$ converges to $W_{\infty}$ in
$C^{1,\alpha}(\dclosure{B_1}).$

We use the first estimate in Equation\refeqn{UseCauchyI}to guarantee that $W_{J}(x_k) > 0.$
We apply Theorem\refthm{NonDeg}to $W_J$ at $x_k$ to guarantee the existence of a point $\tilde{x} \in B_{1/2}$
such that
\begin{equation}
W_{J}(\tilde{x}) \geq C (3/8)^2 \;.
\label{eq:bigxtil}
\end{equation}
Putting this equation together with the second convergence statement in Equation\refeqn{UseCauchyI}and letting
$\tilde{C}$ be defined by the constant on the right hand side of Equation\refeqn{bigxtil}we see that for $i \geq J$
we have:
\begin{equation}
   W_{i}(\tilde{x}) \geq \frac{9\tilde{C}}{10} \;.
\label{eq:bigxtilalliI}
\end{equation}
Since all of the $W_i$'s satisfy a uniform $C^{1,\alpha}$ estimate, there exists an $\tilde{r} > 0$ such that
$W_{i}(y) \geq \tilde{C}/2$ for all $y \in B_{\tilde{r}}(\tilde{x})$ once $i \geq J.$  From this fact we conclude that
$B_{\tilde{r}}(\tilde{x}) \subset \{ W_{\infty} > 0 \}.$

Scaling back to the original functions, we conclude that within $B_{r}(x_0)$ is a ball, $B,$ with radius equal to $r\tilde{r}$
such that $B \subset \{ w_{\infty} > 0 \} \subset \mathcal{P}^c \;.$  Since this type of statement will be true for any $r$ sufficiently
small, we are guaranteed that $x_0$ is \textit{not} a Lebesgue point of $\mathcal{P}.$  Since $x_0$ was arbitrary, we can
conclude that $\mathcal{P}$ has measure zero.

Finally we observe that the nondegeneracy theorem implies immediately that $0$ remains in the free boundary in the
limit.
\end{pf}

\begin{corollary}[Hausdorff Dimension of the Free Boundary]   \label{HDFB}
If $w$ satisfies Equation\refeqn{BasicProb}with coefficients which satisfy Equation\refeqn{UniformEllip}\!\!,
then the free boundary
$$\partial \{ w = 0 \} \cap \partial \{ w > 0 \}$$
is strongly porous and therefore has Hausdorff dimension strictly less than $n.$  In particular, its Lebesgue
n-dimensional measure is zero.
\end{corollary}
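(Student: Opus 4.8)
The plan is to establish strong porosity of the free boundary directly from the two quantitative facts we already have in hand, namely the parabolic growth bound of Theorem~\ref{parabound} and the nondegeneracy of Theorem~\ref{NonDeg}; porosity with a uniform constant then gives a dimension estimate strictly below $n$ by the standard measure-theoretic fact about porous sets (see, e.g., Mattila or Zajíček). Recall that a set $E\subset\R^n$ is \emph{strongly porous} if there is a $\delta\in(0,1)$ such that for every $x\in E$ and every sufficiently small $r>0$ there is a point $y$ with $B_{\delta r}(y)\subset B_r(x)\setminus E$. So the entire task reduces to producing such a $\delta$ at each free boundary point, at every small scale.

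First I would reduce to the origin: fix $x_0\in FB(w)$ and a small $r$ with $B_{2r}(x_0)\subset B_1$, and rescale by $W(x):=r^{-2}w(x_0+rx)$, which again solves \refeqn{BasicProb} (with the rescaled, still uniformly elliptic coefficients $a^{ij}(x_0+rx)$) and has $0\in FB(W)$. It suffices to find, inside $B_1$, a ball of radius $\delta$ contained in $\{W=0\}$ whose position and size are controlled only by $n,\lambda,\Lambda$. Since $0\in FB(W)$, the zero set meets $\overline{B_{1/2}}$; pick $z\in\{W=0\}\cap \overline{B_{1/2}}$. The key point is that $\{W=0\}$ cannot be ``thin'' near $z$: if every ball $B_\rho(z)$ with $\rho$ small contained a point where $W>0$, then nondegeneracy (Theorem~\ref{NonDeg}, applied along a sequence of such points approaching $z$, or more directly at $z$ regarded as a closure point of $\Omega(W)$) forces $\sup_{B_\rho(z)}W\ge C\rho^2$, while the parabolic bound of Theorem~\ref{parabound} caps $W$ near the origin. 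The clean way to exploit this is a dichotomy: either a fixed fraction of $\overline{B_{1/2}}$ already lies in $\{W=0\}$ (and one extracts a sizeable ball by a covering argument), or $\{W>0\}$ is so dense that we can run nondegeneracy against the quadratic bound to derive a contradiction. In fact the cleanest implementation is the classical one for the obstacle problem: choose any $z\in FB(W)\cap \overline{B_{1/4}}$; by Theorem~\ref{parabound} we have $W\le C_0|x|^2$ in $B_{1/2}$, hence for small $\rho$, $W\le C_0\rho^2$ on $B_\rho(z)$; on the other hand, if $B_\rho(z)$ contained no ball of radius $\delta\rho$ inside $\{W=0\}$ for a suitable small $\delta$, then $\{W>0\}\cap B_\rho(z)$ would be ``everywhere dense at scale $\delta\rho$,'' and applying nondegeneracy (Theorem~\ref{NonDeg}) in each such $\delta\rho$-ball yields points with $W\ge c(\delta\rho)^2$; comparing the measure/oscillation of $W$ with the $C^{1,1}$-type bound forces $\delta$ below a universal threshold, a contradiction once $\delta$ is chosen small. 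This produces a universal porosity constant $\delta=\delta(n,\lambda,\Lambda)$.

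Once strong porosity with a uniform constant is established at all scales for $FB(w)$, the dimension conclusion is standard: a strongly porous set in $\R^n$ has upper box-counting dimension at most $n-c\delta^n$ for a dimensional constant $c$, hence Hausdorff dimension strictly less than $n$, and in particular $\mathcal{L}^n(FB(w))=0$. I would cite this directly rather than reprove it.

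The main obstacle I anticipate is the middle step — turning ``the zero set is not thin near a free boundary point'' into an honest ball of radius $\delta r$ in the complement with $\delta$ \emph{independent of $w$ and of the coefficients}. Nondegeneracy gives a point where $W$ is large, and the parabolic bound gives an upper envelope, but bridging these to the \emph{existence of a full ball} inside $\{W=0\}$ requires care: one must ensure the competing estimates are applied at compatible scales and that the constant $\delta$ absorbs only $n,\lambda,\Lambda$. The uniform $C^{1,1}$-type control on $W$ (implicit in the $W^{2,p}$ bounds of Section~\ref{sec-BRCT} together with the $\chi$-right-hand side) is what makes this quantitative and scale-invariant, and keeping all constants universal through the rescaling is the delicate bookkeeping here.
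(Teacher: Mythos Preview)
Your proposal has a genuine gap: you are looking for the porosity ball on the wrong side of the free boundary. You aim to produce, at every scale, a ball of radius $\delta r$ contained in $\{W=0\}$. But at \emph{singular} free boundary points (which certainly occur, already for the Laplacian) the zero set is cusp-like and has density zero; there is simply no ball of any fixed proportion inside $\Lambda(W)$ near such a point. Your dichotomy does not rescue this: a set occupying a fixed fraction of $B_{1/2}$ in measure need not contain a ball of fixed radius, and the ``contradiction'' branch does not contradict anything, since nondegeneracy only gives a point with $W\ge c(\delta\rho)^2$ while the parabolic bound gives $W\le C_0\rho^2$; for small $\delta$ these are perfectly compatible.

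The paper's argument (alluded to as a repetition of the proof that $\mathcal{P}$ has measure zero in Lemma~\ref{BCL}) instead produces the porosity ball inside $\{W>0\}$, which is equally disjoint from $FB(W)$. After your rescaling $W(x)=r^{-2}w(x_0+rx)$ with $0\in FB(W)$, apply nondegeneracy (Theorem~\ref{NonDeg}) directly at the origin (which lies in $\overline{\Omega(W)}$) to obtain $\tilde{x}\in B_{1/2}$ with $W(\tilde{x})\ge c(n,\Lambda)$. Now use the uniform interior modulus of continuity of $W$ (Krylov--Safonov \Holder regularity, depending only on $n,\lambda,\Lambda$ and the bound from Lemma~\ref{bound}) to find $\tilde{r}=\tilde{r}(n,\lambda,\Lambda)>0$ with $W>0$ on $B_{\tilde{r}}(\tilde{x})\subset B_1$. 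This ball lies in $\{W>0\}\subset FB(W)^c$, and $\tilde{r}$ is the uniform porosity constant. The dimension conclusion then follows exactly as you say.
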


For the definition of strongly porous and other basic facts about porosity we refer the reader to Mattila's
book and the references within it.  (See \cite{Ma}.)  Since the proof of this corollary is a repetition of the proof
above that $\mathcal{P}$ has measure zero, we omit it.

\begin{theorem}[Basic Measure Stability Result]  \label{BMSR}
Suppose $w \in W^{2,p}(B_1)$ satisfies\refeqn{BasicProb}and\refeqn{OinFB}\!, assume
$\epsilon > 0, \ p,q > n,$ and $||a^{ij} - \delta ^{ij}||_{L^q(B_1)} < \epsilon,$ and
let $u$ denote the solution to \BVPb{\Delta u = \chisub{\{u > 0\}}}{u \equiv w}{udef}  Then there is a modulus of
continuity $\sigma$ whose definition depends only on $\lambda, \Lambda, p, q, n,$ and $||w||_{W^{2,p}(B_{1})}$
such that
\begin{equation}
| \{ \Lambda (u) \; \Delta \; \Lambda (w) \} \cap B_{1}| \leq \sigma (\epsilon).
\label{eq:symmetricdiff}
\end{equation}
\end{theorem}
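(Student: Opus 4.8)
The plan is to argue by contradiction using the Basic Compactness Lemma (Lemma~\refthm{BCL}). Suppose the conclusion fails. Then there is an $\epsilon_0 > 0$ and a sequence of matrices $a^{ij,k}$ with $||a^{ij,k} - \delta^{ij}||_{L^q(B_1)} < 1/k$, together with solutions $w_k$ to Equation\refeqn{BasicProb}\! (with coefficients $a^{ij,k}$) satisfying $0 \in FB(w_k)$ and a uniform bound $||w_k||_{W^{2,p}(B_1)} \leq M$, such that the corresponding harmonic-operator solutions $u_k$ to Equation\refeqn{udef}\! (with boundary data $w_k$) satisfy
$$
|\{\Lambda(u_k) \,\Delta\, \Lambda(w_k)\} \cap B_1| \geq \epsilon_0 \quad \text{for all } k.
$$
Here I would note that $q > n$ together with Hölder's inequality gives $||a^{ij,k} - \delta^{ij}||_{L^1(B_1)} \to 0$, so hypothesis~5 of Lemma~\refthm{BCL} holds with $A^{ij} = \delta^{ij}$; the other hypotheses of that lemma are immediate from our assumptions (uniform ellipticity, the PDE, $0 \in FB_k$, and the uniform $W^{2,p}$ bound — the last of which needs a small remark, see below).

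Applying Lemma~\refthm{BCL} to the $w_k$ yields (along a subsequence) a limit $w_\infty \in W^{2,p}(B_1) \cap C^{1,\alpha}(\dclosure{B_1})$ with $w_k \to w_\infty$ strongly in $C^{1,\alpha}$, $w_k \rightharpoonup w_\infty$ weakly in $W^{2,p}$, and $\Delta w_\infty = \chisub{\{w_\infty > 0\}}$ with $0 \in FB_\infty$. In particular — and this is the crucial point extracted from the proof of Lemma~\refthm{BCL} — we have $\chisub{\{w_k > 0\}} \to \chisub{\{w_\infty > 0\}}$ a.e., i.e. $|\Lambda(w_k) \,\Delta\, \Lambda(w_\infty)| \to 0$, since the exceptional set $\mathcal{P}$ has measure zero there. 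Next I would run the same compactness argument on the $u_k$: they solve $\Delta u_k = \chisub{\{u_k > 0\}}$ (constant coefficients, so hypotheses~1 and~5 are trivial with $A^{ij} = \delta^{ij}$), they are nonnegative, and by the boundary-regularity Corollary~\refthm{CFLBRII}\! (applied with operator $\Delta$ and obstacle data $\psi = w_k$) together with the $C^{1,\alpha}$ convergence $w_k \to w_\infty$ on $\partial B_1$, they satisfy a uniform $W^{2,p}(B_1)$ bound. One subtlety: Lemma~\refthm{BCL} assumes $0 \in FB_k$; for the $u_k$ this need not hold. I would handle this by a localized version of the same argument — the only place $0 \in FB$ is used in Lemma~\refthm{BCL} is to conclude $0 \in FB_\infty$ at the very end, whereas the measure-theoretic claim $\chisub{\{u_k > 0\}} \to \chisub{\{u_\infty > 0\}}$ a.e. only uses nondegeneracy and the uniform convergence, both of which are available. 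So we get $u_k \to u_\infty$ with $\Delta u_\infty = \chisub{\{u_\infty > 0\}}$ and $|\Lambda(u_k) \,\Delta\, \Lambda(u_\infty)| \to 0$.

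It remains to identify $u_\infty = w_\infty$. Both solve the \emph{same} constant-coefficient obstacle problem $\Delta v = \chisub{\{v > 0\}}$ in $B_1$, and on $\partial B_1$ we have $u_\infty = \lim u_k = \lim w_k = w_\infty$ (uniform convergence of boundary traces). By uniqueness for the obstacle problem with fixed boundary data — Corollary~\refthm{uniqueness}\!, whose proof via the Weak Comparison Principle (Theorem~\refthm{WCP}\!) applies verbatim to the Laplacian — we get $u_\infty \equiv w_\infty$, hence $\Lambda(u_\infty) = \Lambda(w_\infty)$. Then by the triangle inequality for symmetric differences,
$$
|\{\Lambda(u_k) \,\Delta\, \Lambda(w_k)\} \cap B_1| \leq |\Lambda(u_k) \,\Delta\, \Lambda(u_\infty)| + |\Lambda(u_\infty) \,\Delta\, \Lambda(w_\infty)| + |\Lambda(w_\infty) \,\Delta\, \Lambda(w_k)| \to 0,
$$
contradicting $|\{\Lambda(u_k) \,\Delta\, \Lambda(w_k)\} \cap B_1| \geq \epsilon_0$. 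This proves the existence of the modulus $\sigma$; the stated dependence of $\sigma$ on $\lambda, \Lambda, p, q, n$ and $||w||_{W^{2,p}(B_1)}$ is exactly the dependence of the constants entering Lemma~\refthm{BCL}, Corollary~\refthm{CFLBRII}\!, and the nondegeneracy constant.

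The main obstacle is the bookkeeping around applying the compactness lemma to the $u_k$: verifying the uniform $W^{2,p}$ bound on the $u_k$ (which requires the obstacle version of boundary regularity, Corollary~\refthm{CFLBRII}\!, and control on $||w_k||_{W^{2,p}}$ near $\partial B_1$), and dispensing with the $0 \in FB$ hypothesis of Lemma~\refthm{BCL} — for which I would either state a minor variant of that lemma dropping condition~3 and conclusion about $0 \in FB_\infty$, or simply re-derive the a.e. convergence $\chisub{\{u_k > 0\}} \to \chisub{\{u_\infty > 0\}}$ directly by repeating the $\mathcal{P}$-has-measure-zero argument, which nowhere uses that the origin is a free boundary point.
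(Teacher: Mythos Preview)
Your argument is correct and follows the same overall architecture as the paper: assume failure, extract sequences $w_k, u_k, a^{ij,k}$, apply the Basic Compactness Lemma to the $w_k$ to get a limit $w_\infty$ solving $\Delta w_\infty = \chisub{\{w_\infty>0\}}$, and then show both $|\Lambda(w_k)\,\Delta\,\Lambda(w_\infty)|\to 0$ and $|\Lambda(u_k)\,\Delta\,\Lambda(w_\infty)|\to 0$ to reach a contradiction.

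The difference lies in how the second convergence is obtained. The paper does \emph{not} re-run the compactness lemma on the $u_k$. Instead it compares $u_k$ directly to $w_\infty$: since both solve the Laplacian obstacle problem and $u_k = w_k \to w_\infty$ uniformly on $\partial B_1$, the comparison principle gives $\|u_k - w_\infty\|_{L^\infty(B_1)}\to 0$, and then Corollary~4 of \cite{C2} (stability of coincidence sets under uniform convergence for the Laplacian obstacle problem) yields $|\Lambda(u_k)\,\Delta\,\Lambda(w_\infty)|\to 0$ immediately. Your route---extract a second limit $u_\infty$ from the $u_k$ via a variant of Lemma~\refthm{BCL} and then identify $u_\infty = w_\infty$ by uniqueness---is more self-contained in that it avoids the external reference \cite{C2}, at the cost of the bookkeeping you flag (uniform $W^{2,p}$ bounds on $u_k$ via Corollary~\refthm{CFLBRII}\!, and dropping hypothesis~3 from Lemma~\refthm{BCL}). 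Both points are handled correctly: the $W^{2,p}$ bound needs, in addition to what you wrote, a uniform $L^p$ bound on $u_k$, which follows from the maximum principle and the uniform $L^\infty$ bound on $w_k|_{\partial B_1}$; and your observation that the $\mathcal{P}$-has-measure-zero argument nowhere uses $0\in FB_k$ is accurate. The paper's route is shorter but leans on \cite{C2}; yours recycles machinery already built in the paper.
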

\noindent
(Here we use ``$\Delta$'' first to denote the Laplacian and next to denote the symmetric difference between two sets:
$A \Delta B = \{A \setminus B\} \cup \{B \setminus A\}.$)\\
\begin{pf} Let $\gamma := ||w||_{W^{2,p}(B_{1})},$ and suppose the theorem is false.
Then there exist $w_k , u_k \ \text{and} \ a^{ij,k} $ such that:
\begin{enumerate}
   \item $L^kw_k = a^{ij,k}D_{ij}w_k = \chisub{\{w_k > 0 \}} \ \text{in} \  B_1.$ 
   \item $0 \in \text{FB}, w_k(0) = | \nabla w_k(0)| = 0.$ 
   \item $0 < \lambda I \leq a^{ij,k} \leq \Lambda I.$ 
   \item $||a^{ij,k} - \delta ^{ij}||_{L^q(B_1)} < \frac{1}{2^k}.$ 
   \item $\Delta u_k = \chisub{\{ u_k > 0 \}} \ \text{in} \ B_{1} \ \text{and}\  u_k \equiv w_k \ \text{on}\  \partial B_{1}.$ 
   \item $||w_k||_{W^{2,p}(B_{1})} \leq \gamma.$
\end{enumerate}
But,
\begin{equation}
     | \Lambda (u_k) \Delta \Lambda (w_k) \cap B_{1}| \geq \eta > 0 \ \text{for some fixed}\  \eta > 0.
\label{eq:symdiff}
\end{equation}
We invoke the last lemma to guarantee the existence of a function $w_{\infty}$ which satisfies:
\begin{equation}
\Delta w_{\infty} = \chisub{\{w_{\infty} > 0 \}}  \ \ \text{a.e.}
\label{eq:grabbingfruit}
\end{equation}
and has $0 \in FB_{\infty}.$  The last lemma also guarantees that we have $w_k$ converging to $w_{\infty}$
strongly in $C^{1,\alpha}$ and weakly in $W^{2,p}.$

Now we will use Equation\refeqn{symdiff}to get to a contradiction.  We have
\begin{alignat*}{1}
   0 &< \eta \\
      &\leq |\Lambda (u_k) \Delta \Lambda (w_k) \cap B_{1}| \\
      &= ||\chisub{\{u_k > 0\}} - \chisub{\{w_k > 0\}}||_{L^1(B_{1})} \\
      &\leq ||\chisub{\{u_k > 0\}} - \chisub{\{w_{\infty} > 0\}}||_{L^1(B_{1})} + 
            ||\chisub{\{w_{\infty} > 0\}} - \chisub{\{w_k > 0\}}||_{L^1(B_{1})} \\
      &=: I + I\!I \;.
\end{alignat*}
Since \cite{C2} guarantees that the boundary of the set $\{ w_{\infty} \equiv 0 \}$ has finite
$(n-1)$-Hausdorff dimensional measure, it must have zero $n$-dimensional Lebesgue measure.
Thus, we can use uniform convergence to deal with the positivity set, and we can use nondegeneracy to
deal with the interior of the zero set, and thus we can conclude that $\chisub{\{w_k > 0\}}$ converges to
$\chisub{\{w_{\infty} > 0\}}$ almost everywhere.  Then we can apply Lebesgue's Dominated
Convergence Theorem to see that $I\!I \rightarrow 0.$

In order to show $I \rightarrow 0,$ we first note that $w_{\infty}$ and $u_k$ satisfy the same obstacle
problem within $B_{1},$ and on $\partial B_{1}$ we know that $u_k$ equals $w_k$ which in turn converges in
$C^{1,\alpha}$ to $w_{\infty}.$  Now by a well-known comparison principle for the obstacle problem
(see for example, Theorem 2.7(a) of \cite{B}) we know that
\begin{equation}
||u_k - w_{\infty}||_{L^{\infty}(B_{1})} \leq ||u_k - w_{\infty}||_{L^{\infty}(\partial B_{1})} \;.
\label{eq:firstwmp}
\end{equation}
At this point we can quote Corollary 4 of \cite{C2} to finally conclude that $I \rightarrow 0$ and thereby obtain our
contradiction.
\end{pf}

\begin{corollary}[Uniform Stability]  \label{UnifStab}
Suppose $w \in W^{2,p}(B_1)$ satisfies\refeqn{BasicProb}and\refeqn{OinFB}\!, assume
$\epsilon > 0, \ p,q > n,$ and $||a^{ij} - \delta ^{ij}||_{L^q(B_1)} < \epsilon,$ and
let $u$ denote the solution to \BVPb{\Delta u = \chisub{\{u > 0\}}}{u \equiv w}{udef}  Then there is a modulus of
continuity $\sigma$ whose definition depends only on $\lambda, \Lambda, p, q, n,$ and $||w||_{W^{2,p}(B_{1})}$
such that
\begin{equation}
||u - w||_{L^{\infty}(B_1)} \leq \sigma (\epsilon).
\label{eq:LinfDiff}
\end{equation}
\end{corollary}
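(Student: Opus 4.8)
The plan is to argue by contradiction using the same compactness scheme employed for Theorem\refthm{BMSR}, exploiting the fact that the $L^\infty$ conclusion is actually easier to reach than the measure conclusion. Suppose the corollary fails. Then, writing $\gamma := \|w\|_{W^{2,p}(B_1)}$, one produces sequences $w_k, u_k, a^{ij,k}$ satisfying items 1--6 of the proof of Theorem\refthm{BMSR} together with the extra assumption that $\|u_k - w_k\|_{L^\infty(B_1)} \geq \eta$ for some fixed $\eta > 0$ and all $k$. These hypotheses are precisely those of the Basic Compactness Lemma, Lemma\refthm{BCL}, with limiting matrix $A^{ij} = \delta^{ij}$; so after passing to a subsequence one obtains a nonnegative $w_\infty$ with $w_k \to w_\infty$ strongly in $C^{1,\alpha}(\overline{B_1})$ and weakly in $W^{2,p}(B_1)$, satisfying $\Delta w_\infty = \chisub{\{w_\infty > 0\}}$ a.e.\ in $B_1$ with $0 \in FB_\infty$.

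The central point I would then use is that $w_\infty$ is itself \emph{the} solution of the Laplacian obstacle problem carrying its own boundary values $w_\infty|_{\partial B_1}$: by Corollary\refthm{uniqueness} (equivalently, the obstacle-problem comparison principle of \cite{B}) such a solution is unique, so $w_\infty$ plays the role of the function one would call ``$u_\infty$''. Now each $u_k$ solves the Laplacian obstacle problem with boundary data $u_k \equiv w_k$ on $\partial B_1$, so the $L^\infty$ comparison principle for the obstacle problem, applied exactly as in Equation\refeqn{firstwmp}, gives $\|u_k - w_\infty\|_{L^\infty(B_1)} \leq \|u_k - w_\infty\|_{L^\infty(\partial B_1)} = \|w_k - w_\infty\|_{L^\infty(\partial B_1)}$, and the right-hand side tends to $0$ because $w_k \to w_\infty$ in $C^{1,\alpha}(\overline{B_1})$. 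Combining this with $\|w_k - w_\infty\|_{L^\infty(B_1)} \to 0$ (again from the $C^{1,\alpha}$ convergence) and the triangle inequality forces $\|u_k - w_k\|_{L^\infty(B_1)} \to 0$, contradicting $\|u_k - w_k\|_{L^\infty(B_1)} \geq \eta$ and finishing the argument.

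I do not expect any genuine obstacle here: the one thing that must be gotten right is the observation that the limit $w_\infty$ furnished by Lemma\refthm{BCL} is simultaneously the limit of the $u_k$, which collapses the delicate term $I$ from the proof of Theorem\refthm{BMSR} (there requiring Corollary~4 of \cite{C2}) into a one-line comparison estimate. In particular, unlike the measure-stability statement, this $L^\infty$ bound needs no information on the size or Hausdorff dimension of $FB_\infty$. One could alternatively thread the $L^\infty$ conclusion through the proof of Theorem\refthm{BMSR} itself, but the standalone contradiction argument above is the cleaner route.
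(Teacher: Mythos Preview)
Your argument is correct, but it follows a genuinely different route from the paper. The paper proceeds \emph{directly}: it computes
\[
\Delta(u-w) = (\chi_{\{u>0\}} - \chi_{\{w>0\}}) + (a^{ij}-\delta^{ij})D_{ij}w,
\]
bounds the first term in $L^r$ by interpolating between its trivial $L^\infty$ bound and its $L^1$ smallness (the latter coming from the already-proved measure stability, Theorem\refthm{BMSR}), bounds the second term in $L^r$ by \Holder, and then applies Calder\'on--Zygmund plus Sobolev embedding to pass from small $\Delta(u-w)$ in $L^r$ (with $r>n/2$) to small $u-w$ in $L^\infty$. Your approach instead reruns the compactness machine of Lemma\refthm{BCL}and collapses the whole thing into the obstacle-problem $L^\infty$ comparison principle (Equation\refeqn{firstwmp}), exactly as you describe. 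What this buys you is that your proof is logically independent of Theorem\refthm{BMSR}and hence of Corollary~4 of \cite{C2}; the paper's proof, by contrast, treats Corollary\refthm{UnifStab}as a genuine \emph{corollary} of the measure stability and so inherits that dependency. The trade-off is that the paper's argument is constructive and shows explicitly how the modulus $\sigma$ in\refeqn{LinfDiff}is built from the modulus in\refeqn{symmetricdiff}, whereas yours produces $\sigma$ only abstractly via contradiction.
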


\begin{pf} By Calderon-Zygmund theory, if the Laplacian of $u - w$ is small in $L^r,$ then $u - w$ will be small in
$W^{2,r}.$  (See Corollary 9.10 in \cite{GT}.)  If $r > n/2,$ then smallness in $W^{2,r}$ guarantees smallness
in $L^{\infty}$ by applying the Sobolev Embedding Theorem.
\begin{alignat*}{1}
\Delta (u - w) &= \chisub{\{u > 0\}} - (\delta^{ij} - a^{ij} + a^{ij}) D_{ij}w \\
                      &= (\chisub{\{u > 0\}} - \chisub{\{w > 0\}}) + (a^{ij} - \delta^{ij})D_{ij} w \\
                      &=: I + I\!I .
\end{alignat*}
The fact that $I$ is small in any $L^r$ follows from the fact that it is bounded between $-1$ and $1$ (to get control of its
$L^\infty$ norm), and is as small as we like in $L^1$ by Theorem\refthm{BMSR}\!.  In order to guarantee that $I\!I$ is
small in $L^{r}$ for some $r > n/2,$ we first observe that $D_{ij}w$ is bounded in $L^p$ for some $p > n,$ and
$||a^{ij} - \delta ^{ij}||_{L^q(B_1)}$ is as small as we like by our hypotheses.  Now we simply apply \Holder\!'s inequality.
\end{pf}


\newsec{Regularity of the Free Boundary}{RFB}
We turn now to a study of the free boundary in the case where the $a^{ij} \in \text{VMO}.$  We will show 
the existence of blowup limits and it will follow from this
result together with the measure stability result from the previous section, that a form of the Caffarelli Alternative will
hold in a suitable measure theoretic sense.

\begin{theorem}[Existence of Blowup Limits]  \label{EBL}
Assume $w$ satisfies\refeqn{BasicProb}and\refeqn{OinFB}\!\!, assume $a^{ij}$ satisfies\refeqn{UniformEllip}\!and
belongs to VMO, and define the rescaling
$$w_{\epsilon}(x) := \epsilon^{-2} w(\epsilon x) .$$
Then for any sequence $\{ \epsilon_n \} \downarrow 0,$ there exists a subsequence (which we will still call $\{ \epsilon_n \}$
to simplify notation) and a symmetric matrix $A = (A^{ij})$ with
$$0 < \lambda I \leq A \leq \Lambda I$$ such that for all $1 \leq i,j \leq n$ we have
\begin{equation}
\myavinttwo{B_{\epsilon_n}} a^{ij}(x) \; dx \rightarrow A^{ij} \;,
\label{eq:weakpointlim}
\end{equation}
and on any compact set, 
$w_{\epsilon_n}(x)$ converges strongly in $C^{1,\alpha}$ and weakly in $W^{2,p}$ to a function
$w_{\infty} \in W^{2,p}_{loc}(\R^n),$ which satisfies:
\begin{equation}
   A^{ij} D_{ij}w_{\infty} = \chisub{ \{ w_{\infty} > 0 \} } \ \ \text{on} \ \R^n ,
\label{eq:globalbu}
\end{equation}
and has $0$ in its free boundary.
\end{theorem}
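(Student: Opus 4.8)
The plan is to set up a rescaling argument and then invoke the Basic Compactness Lemma (Lemma \ref{BCL}). First I would fix the sequence $\{\epsilon_n\} \downarrow 0$ and define the rescaled coefficients $a^{ij,n}(x) := a^{ij}(\epsilon_n x)$ together with the rescaled solutions $w_n(x) := w_{\epsilon_n}(x) = \epsilon_n^{-2} w(\epsilon_n x)$. A direct computation shows that $a^{ij,n} D_{ij} w_n = \chisub{\{w_n > 0\}}$ on any ball $B_R$ once $\epsilon_n$ is small enough that $B_{R\epsilon_n} \subset B_1$, and the ellipticity bounds in \refeqn{UniformEllip} are invariant under this rescaling. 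Because $0 \in \partial\{w > 0\}$ we also get $0 \in \text{FB}_n$ and hence $w_n(0) = |\nabla w_n(0)| = 0$ for each $n$.

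The crux is to produce the constant limiting matrix $A$ and to verify the $L^1$-closeness hypothesis (item 5 of Lemma \ref{BCL}) in a quantitative way. Here I would use the VMO hypothesis. For a fixed radius $R$, consider the averages $\myavinttwo{B_{R\epsilon_n}} a^{ij}$; these are a bounded sequence in $\R$ (by ellipticity), so after passing to a subsequence (diagonalizing over $R = 1, 2, 3, \dots$ and over the finitely many index pairs $i,j$) they converge to some $A^{ij}$, and one checks that the limit is independent of $R$ precisely because $\eta_{a^{ij}}(r) \to 0$: the difference of the averages over $B_{R\epsilon_n}$ and $B_{\epsilon_n}$ is controlled by the VMO-modulus evaluated at $R\epsilon_n$, which tends to $0$. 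Symmetry of $A$ and $\lambda I \le A \le \Lambda I$ pass to the limit. This gives \refeqn{weakpointlim}. For the $L^1$ estimate, I would change variables to write $\int_{B_R} |a^{ij,n}(x) - A^{ij}| \, dx = \epsilon_n^{-n} \int_{B_{R\epsilon_n}} |a^{ij}(y) - A^{ij}| \, dy$, split $A^{ij}$ as $(A^{ij} - a^{ij}_{B_{R\epsilon_n}}) + a^{ij}_{B_{R\epsilon_n}}$, bound the first piece by the (vanishing) difference of averages just discussed and the second piece by $\eta_{a^{ij}}(R\epsilon_n)$. Both go to zero, so along a further subsequence this quantity is smaller than any prescribed $\sigma(1/n)$.

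To run Lemma \ref{BCL} I also need the uniform $W^{2,p}$ bound (item 4). This follows from Theorem \ref{CFLIR} (interior regularity of Chiarenza–Frasca–Longo) applied to $w_n$ on, say, $B_R \subset\subset B_{2R}$: the right-hand side $\chisub{\{w_n>0\}}$ is bounded by $1$ in every $L^p$, and the $L^p$ norm of $w_n$ itself is controlled once we observe that $0 \in \text{FB}_n$, so by the Parabolic Bound (Theorem \ref{parabound}, applied to suitable rescalings of $w$) $w_n$ grows at most quadratically and is bounded on compact sets uniformly in $n$. Crucially, the constant in \refeqn{CFL91est} depends on the $a^{ij,n}$ only through their VMO-moduli, and since $\eta_{a^{ij,n}}(r) = \eta_{a^{ij}}(\epsilon_n r) \le \eta_{a^{ij}}(r)$ these moduli are uniformly dominated by that of the original $a^{ij}$; hence the constant is uniform in $n$.

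With items 1--5 in hand, Lemma \ref{BCL} applied on each fixed ball $B_R$ (then diagonalizing over $R \to \infty$) yields a subsequence and a function $w_\infty \in W^{2,p}_{loc}(\R^n) \cap C^{1,\alpha}_{loc}(\R^n)$ with $w_n \to w_\infty$ strongly in $C^{1,\alpha}$ and weakly in $W^{2,p}$ on compact sets, satisfying $A^{ij} D_{ij} w_\infty = \chisub{\{w_\infty > 0\}}$ a.e. on $\R^n$ and with $0 \in \text{FB}_\infty$; the last assertion again uses the nondegeneracy theorem (Theorem \ref{NonDeg}), which guarantees that the free boundary point at the origin survives in the limit. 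The main obstacle I anticipate is the bookkeeping around \emph{uniformity in $n$} of all the constants --- both the $W^{2,p}$ bound and the verification of hypothesis 5 --- which is exactly where the VMO assumption (rather than mere ellipticity) is indispensable, since the rescaled coefficients $a^{ij}(\epsilon_n x)$ need not converge in $L^1$ without control on the oscillation at all scales.
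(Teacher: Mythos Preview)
Your proposal is correct and follows essentially the same route as the paper: extract a convergent subsequence of the averages $A^{ij}_{\epsilon_n}$ using boundedness, use the VMO hypothesis to upgrade this to $L^1$-closeness of the rescaled coefficients to the constant matrix $A$, obtain a uniform $W^{2,p}$ bound on the rescalings via the parabolic bound (Theorem~\ref{parabound}) and the Chiarenza--Frasca--Longo estimate, and then invoke Lemma~\ref{BCL}. Two minor differences worth noting: the paper cites Corollary~\ref{CFLBRII} (boundary estimate) rather than Theorem~\ref{CFLIR} for the $W^{2,p}$ bound, whereas your use of the interior estimate together with the observation $\eta_{a^{ij,n}}(r)\le\eta_{a^{ij}}(r)$ is arguably the more natural choice here; and you are more explicit than the paper about the diagonalization over $R\to\infty$ needed to pass from $B_1$ to all of $\R^n$.
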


\begin{remark}[Nonuniqueness of Blowup Limits]  \label{NBL}
Notice that the theorem does not claim that the blowup limit is unique.  In fact, it is relatively easy to produce nonuniqueness,
and we will give such an example in the next section.
\end{remark}

\begin{pf}
Because the matrix $a^{ij}(x)$ satisfies $0 < \lambda I \leq a^{ij}(x) \leq \Lambda I$ for all $x,$ it is clear that if we define 
the matrix
\begin{equation}
   A^{ij}_r := \myavint{B_r} a^{ij}(x) \; dx,
\label{eq:AijrDef}
\end{equation}
then this matrix must also
satisfy the same inequality.   Of course, since all of the entries are bounded, we can take a subsequence of the radii $\epsilon_{n}$
such that each scalar $A^{ij}_{\epsilon_{n}}$ converges to a real number $A^{ij}.$  With this subsequence, we already know
that we satisfy Equation\refeqn{weakpointlim}\!\!, but because $a^{ij}(x) \in$ VMO, we also know:
$$\myavinttwo{B_{\epsilon_n}} \left| a^{ij}(x) - A^{ij}_{\epsilon_{n}} \right| \; dx \leq \eta(\epsilon_{n}) \rightarrow 0 \;,$$
where $\eta$ is just taken to be the maximum of all of the VMO-moduli for each of the $a^{ij}$'s,
and by the triangle inequality this leads to
\begin{equation}
\myavinttwo{B_{\epsilon_n}} \left| a^{ij}(x) - A^{ij} \right| \; dx \rightarrow 0 \;.
\label{eq:strongpointlim}
\end{equation}

Now we observe that if $a^{ij,n}(x) := a^{ij}(\epsilon_n x)$ then the rescaled function $w_{n} := w_{\epsilon_n}$ satisfies the equation:
\begin{equation}
    a^{ij,n}(x) D_{ij} w_{n}(x) = \chisub{ \{ w_{n} > 0 \} }(x) \;,
\label{eq:rescver}
\end{equation}
and
\begin{equation}
\myavint{B_{1}} \left| a^{ij,n}(x) - A^{ij} \right| \; dx \leq \eta(\epsilon_{n}) \rightarrow 0 \;.
\label{eq:happyaijn}
\end{equation}
By combining Theorem\refthm{parabound}with Corollary\refthm{CFLBRII}we get the existence of a constant $\gamma < \infty$
so that $||w_n||_{W^{2,p}(B_{1})} \leq \gamma$ for all $n.$  At this point we satisfy all of the hypotheses of Lemma\refthm{BCL}\!\!,
and applying that lemma gives us exactly what we need.
\end{pf}

\begin{theorem}[Caffarelli's Alternative in Measure (Weak Form)]  \label{CAMW}
Under the assumptions of the previous theorem, the limit
\begin{equation}
    \lim_{r \downarrow 0} \frac{ |\Lambda(w) \cap B_r| }{ |B_r| }
\label{eq:densitystatement}
\end{equation}
exists and must be equal to either $0$ or $1/2.$
\end{theorem}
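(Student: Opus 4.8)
The plan is to leverage the Existence of Blowup Limits theorem (Theorem \ref{EBL}) together with the classical Caffarelli Alternative for constant-coefficient obstacle problems, using a measure-stability argument to pass from blowup limits back to the original function. First I would fix an arbitrary sequence $\{r_n\} \downarrow 0$; by Theorem \ref{EBL} there is a subsequence (still called $r_n$) and a constant symmetric elliptic matrix $A$ such that $w_{r_n} \to w_\infty$ strongly in $C^{1,\alpha}_{loc}$ and weakly in $W^{2,p}_{loc}$, with $A^{ij}D_{ij}w_\infty = \chisub{\{w_\infty > 0\}}$ on $\R^n$ and $0 \in FB(w_\infty)$. After a linear change of variables $y = A^{1/2}x$ (which only distorts Lebesgue measure by a fixed factor and maps balls to ellipsoids comparable to balls), $w_\infty$ becomes a solution of the standard obstacle problem $\Delta \tilde w_\infty = \chisub{\{\tilde w_\infty > 0\}}$ with the origin in its free boundary. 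Caffarelli's Alternative (Theorem \ref{CaffAlt}) then applies to $\tilde w_\infty$: either $0$ is a singular point, in which case $m.d.(\Lambda(\tilde w_\infty) \cap B_\rho)/\rho \to 0$ and one computes (using the nondegeneracy and the $C^{1,1}$ bound, via an iteration or a direct flatness estimate) that the density of $\Lambda(\tilde w_\infty)$ at $0$ is $0$; or $0$ is a regular point, in which case the free boundary is $C^{1,\alpha}$ near $0$ and the density of $\Lambda(\tilde w_\infty)$ at $0$ is exactly $1/2$. Undoing the linear change of variables, I get that the density $\lim_{\rho \downarrow 0}|\Lambda(w_\infty) \cap B_\rho|/|B_\rho|$ exists and equals $0$ or $1/2$ (the value possibly shifting under the anisotropic rescaling — here one must be slightly careful, since an anisotropic linear map does not preserve the \emph{density} of a half-space; I would instead argue that the density of the zero set of a blowup limit at its own vertex is a scale-invariant quantity and, for a solution that is itself a global solution, compute it directly: $w_\infty$ is either identically supported (density $0$) or, at a regular point, asymptotically a half-space solution $\frac{1}{2}A^{-1}\ldots$, whose zero set still has density $1/2$ because a half-space has density $1/2$ under \emph{any} linear rescaling).

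Next I would transfer this information to the original $w$ by a contradiction/compactness argument modeled on the proof of Theorem \ref{BMSR}. Suppose the limit in \refeqn{densitystatement} does not exist, or exists but is not in $\{0, 1/2\}$. In the first case there are two sequences $s_n \downarrow 0$ and $t_n \downarrow 0$ along which $|\Lambda(w) \cap B_r|/|B_r|$ converges to two distinct values; in the second case there is a single sequence along which it converges to some $\delta \notin \{0,1/2\}$. Along each such sequence, apply Theorem \ref{EBL} to extract a blowup limit $w_\infty$, and observe that $|\Lambda(w_{r_n}) \cap B_1| = r_n^{-n}|\Lambda(w) \cap B_{r_n}| \cdot |B_1|^{-1} \cdot |B_1|$, i.e. the rescaled density on $B_1$ equals the original density on $B_{r_n}$. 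Since $w_{r_n} \to w_\infty$ in $C^{1,\alpha}_{loc}$, and since (by Corollary \ref{HDFB} applied to $w_\infty$) the free boundary $FB(w_\infty)$ has zero Lebesgue measure, the characteristic functions $\chisub{\{w_{r_n} > 0\}}$ converge to $\chisub{\{w_\infty > 0\}}$ a.e. on $B_1$ — exactly as argued inside the proof of the Basic Compactness Lemma, using uniform convergence on the positivity set and nondegeneracy on the interior of the zero set. Dominated convergence then gives $|\Lambda(w_{r_n}) \cap B_1| \to |\Lambda(w_\infty) \cap B_1|$. Hence the original density along the sequence converges to $|\Lambda(w_\infty) \cap B_1|/|B_1|$, which by the first paragraph is $0$ or $1/2$. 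In the first (oscillation) case this forces the two limit values to both lie in $\{0, 1/2\}$; to rule out one being $0$ and the other $1/2$ I would invoke the measure-stability/monotonicity built into the Strong Form of the Alternative (Theorem \ref{CAMS} as stated in the introduction): once the density exceeds a small $\epsilon$ at some scale $t \le r_0$, it stays $\ge 1/2 - \epsilon$ at all smaller scales $r \le \tau t$, so it cannot return near $0$ — contradiction. In the second case we directly contradict $\delta \notin \{0,1/2\}$. Either way the limit exists and equals $0$ or $1/2$.

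The main obstacle I anticipate is the clean computation of the density of the zero set at a free-boundary point of a global constant-coefficient solution — specifically verifying that the Caffarelli trichotomy (singular vs. regular) really does pin the density down to exactly $0$ or $1/2$ and nothing in between, uniformly under the anisotropic linear change of variables induced by $A$. For the regular case this amounts to knowing that near a regular point the solution's coincidence set, after the $A^{1/2}$-normalization, is a $C^{1}$-perturbation of a half-ball, whose density is visibly $1/2$; for the singular case it amounts to the porosity/flatness estimate $m.d.(\Lambda \cap B_\rho) = o(\rho)$ forcing $|\Lambda \cap B_\rho|/|B_\rho| \to 0$, which follows since a set trapped between two parallel hyperplanes at distance $o(\rho)$ inside $B_\rho$ occupies a vanishing fraction of the ball. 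A secondary subtlety is making the oscillation-exclusion airtight — I expect this is where the Strong Form (Theorem \ref{CAMS}) is genuinely needed, so in the write-up I would either prove the Weak Form as a corollary \emph{after} the Strong Form, or re-derive the needed monotonicity of density across scales directly from the blowup-limit dichotomy and measure stability (Theorem \ref{BMSR} and Corollary \ref{UnifStab}) applied to $w_{r_n}$ for large $n$, where $a^{ij,n}$ is $L^q$-close to the constant matrix $A$.
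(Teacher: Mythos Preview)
Your overall strategy matches the paper's, but there is a real gap in the transfer step. You correctly establish that for the blowup limit $w_\infty$ the limiting density $\lim_{\rho \downarrow 0}|\Lambda(w_\infty) \cap B_\rho|/|B_\rho|$ lies in $\{0,1/2\}$. You then assert that the original density along $r_n$ converges to $|\Lambda(w_\infty) \cap B_1|/|B_1|$, ``which by the first paragraph is $0$ or $1/2$.'' But your first paragraph computed the density of $\Lambda(w_\infty)$ \emph{at the origin}, not at \emph{scale one}; Theorem\refthm{CaffAlt}as stated says nothing about $|\Lambda(w_\infty) \cap B_1|/|B_1|$. The claim you want is in fact true, because $w_\infty$ is a \emph{global} solution and hence (by the classification of global solutions) is either a half-space solution or a nonnegative quadratic polynomial, so that its zero-set density is the same at every scale --- but you neither invoke nor prove that classification, and it does not fall out of Theorem\refthm{CaffAlt}applied on $B_1$ alone.

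The paper sidesteps this entirely. Having shown that $0$ is a regular point of $w_\infty$ (so its density \emph{tends} to $1/2$), the paper uses the convergence $|\Lambda(w_{\epsilon_n}) \cap B_r| \to |\Lambda(w_\infty) \cap B_r|$ at a \emph{small} fixed $r$, where the right side is already close to $1/2$, and then unwinds the rescaling to obtain $|\Lambda(w) \cap B_{r\epsilon_n}|/|B_{r\epsilon_n}| \to 1/2$. For the oscillation case the paper also avoids Theorem\refthm{CAMS}\!\!: if along some other sequence $s_k$ the density tended to $0$, then the continuity of $g(r) := |\Lambda(w) \cap B_r|/|B_r|$, together with the already-found sequence along which $g \to 1/2$, produces an interlacing sequence $\tilde s_k$ with $g(\tilde s_k) \equiv 1/4$; rerunning the blowup argument on $\tilde s_k$ then yields the contradiction. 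This keeps the Weak Form logically prior to the Strong Form, so the reordering you contemplate is unnecessary.
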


\begin{pf}
We will suppose that
\begin{equation}
    \limsup_{r \downarrow 0} \frac{ |\Lambda(w) \cap B_r| }{ |B_r| } >  0
\label{eq:densitystatement2}
\end{equation}
and show that in this case the limit exists and is equal to 1/2.
It follows immediately from this assumption that there exists
a sequence $\{ \epsilon_n \} \downarrow 0$ such that (for some $\delta > 0$) we have
\begin{equation}
     \frac{ |\Lambda(w_{\epsilon_n}) \cap B_1| }{ |B_1| } >  \delta
\label{eq:densitystatement3}
\end{equation}
for all $n.$  (Here again we use the quadratic rescaling: $w_{s}(x) := s^{-2}w(sx),$
and we will even shorten ``$w_{\epsilon_n}$'' to ``$w_n$'' henceforth.)
We can now apply the last theorem to extract a subsequence (still called ``$\epsilon_n$''), and
to guarantee the existence of a
symmetric positive definite matrix $A^{ij}$ with all of its eigenvalues in $[\lambda, \Lambda],$
and a $w_{\infty} \in W^{2,p}_{loc}(\R^n),$ such that if $a^{ij,n}(x) := a^{ij}(\epsilon_n x),$ then
\begin{equation}
\myavint{B_{1}} \left| a^{ij,n}(x) - A^{ij} \right| \; dx \rightarrow 0 \;.
\label{eq:strongpointlim2}
\end{equation}
and
\begin{equation}
   A^{ij} D_{ij}w_{\infty} = \chisub{ \{ w_{\infty} > 0 \} } \ \ \text{on} \ \R^n ,
\label{eq:globalbu2}
\end{equation}
and $0$ is in $FB(w_{\infty}).$  Furthermore, we will have $w_n$ converging to $w_{\infty}$ in both
$W^{2,p}$ and $C^{1,\alpha}$ for all $p$ and $\alpha$ on every compact set.

Now we make an orthogonal change of coordinates on $\R^n$ to diagonalize the matrix $A^{ij},$ and then we dilate the individual
coordinates by strictly positive amounts depending only on $\lambda$ and $\Lambda$ so that in the new coordinate system we have
$A^{ij} = \delta^{ij}.$  Now of course, there are new functions, and the constants may change by positive factors that we can
control, but all of the equations above remain qualitatively unchanged, and we will abuse notation (in a manner similar to the fact
that we have not bothered to rename the subsequences), by continuing to refer to our new functions in the new coordinate system as
$w_{n}$ and $w_{\infty},$ and by continuing to refer to the ``new'' $a^{ij,n}$ as $a^{ij,n},$ etc.

Now we let $u_n$ denote the solution to \BVPb{\Delta u_n = \chisub{\{u_n > 0\}}}{u_n \equiv w_n}{undef}
Using Equations\refeqn{densitystatement3}and\refeqn{strongpointlim2}and applying our measure stability result to $u_n$
and $w_n$ we can make $|\Lambda(u_n) \Delta \Lambda(w_n)|$ as small as we like for $n$ sufficiently large.  In particular,
we now have:
\begin{equation}
     \frac{ |\Lambda(u_{n}) \cap B_{1}| }{ |B_{1}| } >  \frac{\delta}{2} \;.
\label{eq:densitystatement4}
\end{equation}

Since $w_n$ converges uniformly to $w_{\infty}$ on every compact set, it follows that $u_n$ converges uniformly to $w_{\infty}$
on $\partial B_{1},$ and now we start arguing exactly as in the last paragraph of the proof of our measure stability theorem.
In particular, Equation\refeqn{firstwmp}holds, and Corollary 4 of \cite{C2} then gives us
\begin{equation}
     \frac{ |\Lambda(w_{\infty}) \cap B_{1}| }{ |B_{1}| } >  \frac{\delta}{2} \;.
\label{eq:densitystatement5}
\end{equation}
Of course now we can invoke the $C^{1,\alpha}$ regularity at regular points (see Theorem\refthm{C1Al}\!\!) to guarantee that
$w_{\infty}$ is $C^{1,\alpha}$ at the origin, and this in turn implies that
\begin{equation}
   \lim_{r \downarrow 0} \frac{ |\Lambda(w_{\infty}) \cap B_{r}|}{|B_{r}|} = \frac{1}{2} \;.
\label{eq:winfgoodatzero}
\end{equation}

Now it remains to do two things.  First we need to pass this result from $w_{\infty}$ back to our subsequence of radii for $w,$ but
second we will then need to show that we get the same limit along any sequence of radii converging to zero.  The first step is a
consequence of combining our measure stability theorem with Corollary 4 of \cite{C2} again.  Indeed, for any $r > 0,$
\begin{equation}
  \lim_{n \rightarrow \infty} 
  \left( \frac{| \; \Lambda(w_{n}) \cap B_r \; |}{| \; B_r \; |}
     - \frac{| \; \Lambda(w_{\infty}) \cap B_r \; |}{| \; B_r \; |}
  \right) = 0 \;.
  \label{eq:Blim}
\end{equation}
On the other hand, by our rescaling, this equation becomes
\begin{equation}
  \lim_{n \rightarrow \infty}
  \left( \frac{| \; \Lambda(w) \cap B_{(r \epsilon_n)} \; |}{| \; B_{(r \epsilon_n)} \; |}
         - \frac{| \; \Lambda(w_{\infty}) \cap B_r \; |}{| \; B_r \; |}
  \right) = 0 \;,
  \label{eq:BetterBlim}
\end{equation}
which we can combine with Equation\refeqn{winfgoodatzero}to ensure that
\begin{equation}
  \lim_{n \rightarrow \infty} \frac{| \; \Lambda(w) \cap B_{(r \epsilon_n)} \; |}{| \; B_{(r \epsilon_n)} \; |}
         = \frac{1}{2} \;.
  \label{eq:BestBlim}
\end{equation}

Finally, we wish to be able to replace ``$r \epsilon_n$'' with ``$r$'' in Equation\refeqn{BestBlim}\!\!.  Suppose that
we have a different sequence of radii converging to zero (which we can call $s_k$) such that
\begin{equation}
  \lim_{k \rightarrow \infty} \frac{| \; \Lambda(w) \cap B_{s_k} \; |}{| \; B_{s_k} \; |}
         \ne \frac{1}{2} \;.
  \label{eq:BestBlim2}
\end{equation}
At this point we are led to a contradiction in one of two ways.  If the limit above does not equal zero (including the case
where it simply does not exist), then we can simply
use Theorem\refthm{EBL}combined with Theorem\refthm{BMSR}to get convergence to a global solution with properties
which contradict the Caffarelli Alternative (Theorem\refthm{CaffAlt}\!\!).  On the other hand, if the
limit does equal zero, then we use the continuity of the function:
$$g(r) := \frac{| \; \Lambda(w) \cap B_{r} \; |}{| \; B_{r} \; |}$$
to get an interlacing sequence of radii which we can call $\tilde{s}_k$ and which converge to zero such that
$g(s_k) \equiv 1/4,$ and then we proceed as in the first case.
\end{pf}

\begin{definition}[Regular and Singular Free Boundary Points]  \label{RSFBP}
A free boundary point where $\Lambda$ has density equal to $0$ is referred to as \textit{singular}, and 
a free boundary point where the density of $\Lambda$ is $1/2$ is referred to as \textit{regular}. 
\end{definition}

The theorem above gives us the alternative, but we do not have any kind of uniformity to our convergence.  Caffarelli
stated his original theorem in a much more quantitative (and therefore useful) way, and so now we will state and prove a similar
stronger version.  We will need the stronger version in order to show openness and stability under perturbation of the regular
points of the free boundary.

\begin{theorem}[Caffarelli's Alternative in Measure (Strong Form)]  \label{CAMS}
Under the assumptions of the previous theorem, for any $\epsilon \in (0, 1/8),$  there exists an $r_0 \in (0,1),$ and a $\tau \in (0,1)$
such that \newline
\indent if there exists a $t \leq r_0$ such that 
\begin{equation}
   \frac{|\Lambda(w) \cap B_t|}{|B_t|} \geq \epsilon \;,
\label{eq:bigonce}
\end{equation}
\indent then for all $r \leq \tau t$ we have
\begin{equation}
   \frac{|\Lambda(w) \cap B_r|}{|B_r|} \geq \frac{1}{2} - \epsilon \;,
\label{eq:bigallatime}
\end{equation}
and in particular, $0$ is a regular point according to our definition.
The $r_0$ and the $\tau$ depend on $\epsilon$ and on the $a^{ij},$ but they do \textit{not} depend on the
function $w.$
\end{theorem}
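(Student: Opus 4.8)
The plan is to argue by contradiction, producing sequences of solutions that violate the conclusion and extracting a blowup limit that contradicts Caffarelli's Alternative (Theorem~\ref{CaffAlt}) together with Theorem~\ref{RegPts}. Suppose the statement fails for some fixed $\epsilon \in (0, 1/8)$. Then for each $k$ there is a solution $w_k$ of \refeqn{BasicProb} with coefficients $a^{ij,k}$ satisfying \refeqn{UniformEllip}, with $0 \in FB(w_k)$, a radius $t_k \leq 1/k$ with $|\Lambda(w_k) \cap B_{t_k}|/|B_{t_k}| \geq \epsilon$, and yet a radius $r_k \leq t_k/k$ with $|\Lambda(w_k) \cap B_{r_k}|/|B_{r_k}| < 1/2 - \epsilon$. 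The subtlety here is that $r_0$ and $\tau$ are allowed to depend on the $a^{ij}$, so in the contradiction argument all the $w_k$ should be taken to have the \emph{same} coefficient matrix $a^{ij}$ (and hence the same VMO-modulus $\eta$); actually, since Theorem~\ref{EBL} only uses the VMO-modulus through $\eta$, it suffices to let the $a^{ij,k}$ range over matrices sharing a common VMO-modulus $\eta$, which is all that is needed for the uniform bounds below.

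First I would rescale. Set $v_k(x) := (t_k)^{-2} w_k(t_k x)$ and $b^{ij,k}(x) := a^{ij,k}(t_k x)$; then $v_k$ solves the obstacle problem with coefficients $b^{ij,k}$ (still sharing VMO-modulus $\eta$, since dilation only improves the modulus), $0 \in FB(v_k)$, and by hypothesis $|\Lambda(v_k) \cap B_1|/|B_1| \geq \epsilon$, while $|\Lambda(v_k) \cap B_{\rho_k}|/|B_{\rho_k}| < 1/2 - \epsilon$ where $\rho_k := r_k/t_k \leq 1/k \to 0$. By Theorem~\ref{parabound} combined with Corollary~\ref{CFLBRII} the $v_k$ are uniformly bounded in $W^{2,p}(B_1)$, so we may apply the argument of Theorem~\ref{EBL} (equivalently Lemma~\ref{BCL} on $B_1$, after passing to a subsequence along which the averages $\myavint{B_1} b^{ij,k}$ converge to a constant matrix $A^{ij}$, and then diagonalizing and dilating so that $A^{ij} = \delta^{ij}$): we obtain $v_\infty \geq 0$ with $\Delta v_\infty = \chisub{\{v_\infty > 0\}}$ in $B_1$, $0 \in FB(v_\infty)$, and $v_k \to v_\infty$ strongly in $C^{1,\alpha}$ and weakly in $W^{2,p}$ on $B_1$. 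Now I invoke the Basic Measure Stability Result (Theorem~\ref{BMSR}): replacing $v_k$ by the solution $u_k$ of the Laplacian obstacle problem with the same boundary data, the symmetric difference $|\Lambda(u_k) \Delta \Lambda(v_k) \cap B_1|$ tends to $0$, so $|\Lambda(u_k) \cap B_1|/|B_1| \geq \epsilon/2$ for large $k$; passing to the limit via Equation~\refeqn{firstwmp} and Corollary~4 of \cite{C2} gives $|\Lambda(v_\infty) \cap B_1|/|B_1| \geq \epsilon/2 > 0$, so $0$ is \emph{not} a singular point of $FB(v_\infty)$.

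Therefore by Caffarelli's Alternative $0$ is a regular point of $FB(v_\infty)$, hence by Theorem~\ref{C1Al} the free boundary of $v_\infty$ is $C^{1,\alpha}$ near $0$, so $|\Lambda(v_\infty) \cap B_r|/|B_r| \to 1/2$ as $r \downarrow 0$. In particular I can fix a radius $\bar{r}$ with $|\Lambda(v_\infty) \cap B_{\bar r}|/|B_{\bar r}| > 1/2 - \epsilon/2$. Then for the tail of the sequence, Equation~\refeqn{Blim} (measure stability at the single scale $\bar r$) forces $|\Lambda(v_k) \cap B_{\bar r}|/|B_{\bar r}| > 1/2 - \epsilon$ for all large $k$. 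But $\rho_k \to 0 < \bar r$, so once $\rho_k < \bar r$ I still need to control the density at the \emph{smaller} radius $\rho_k$, not at $\bar r$ --- and this is the main obstacle: the density $g(r) = |\Lambda(v_k) \cap B_r|/|B_r|$ is not monotone in $r$, so I cannot immediately conclude from largeness at $\bar r$ to largeness at $\rho_k$. To get around this I work at the right scale from the start: instead of arguing at a fixed $\bar r$, observe that $\rho_k \to 0$ and the rescalings $(v_k)_{\rho_k}$ themselves satisfy all hypotheses of Theorem~\ref{EBL}, so along a further subsequence they converge to \emph{another} global solution $\tilde v_\infty$ with $0 \in FB$; by construction $|\Lambda(\tilde v_\infty) \cap B_1|/|B_1| \leq 1/2 - \epsilon < 1/2$, using Theorem~\ref{BMSR} and Corollary~4 of \cite{C2} to pass the strict inequality through the limit. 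On the other hand, Theorem~\ref{RegPts} applied to $v_\infty$ (whose origin is a regular point, with a definite lower bound on $m.d.(\Lambda(v_\infty) \cap B_1)$ coming from the $\epsilon/2$ density estimate via a Vitali/covering argument) shows that the zero-set density stays bounded below by $1/2$ minus an error that goes to zero under the quadratic rescaling --- quantitatively, the $C^{1,\alpha}$ norm of $FB(v_\infty)$ decays under rescaling by Remark~\ref{rmk:GoodScale}, so the rescaled limit $\tilde v_\infty$ must be a half-space solution $\tfrac14((x_n)_+)^2$, giving density exactly $1/2$. This contradicts $|\Lambda(\tilde v_\infty) \cap B_1|/|B_1| \leq 1/2 - \epsilon$, completing the proof. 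The delicate point throughout is keeping all constants (the $W^{2,p}$ bound $\gamma$, the VMO-modulus $\eta$, the ellipticity constants) independent of $w$, which is exactly what Theorem~\ref{parabound}, Corollary~\ref{CFLBRII}, and Lemma~\ref{BCL} were set up to provide.
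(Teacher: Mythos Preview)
Your setup through the extraction of $v_\infty$ and the verification that $0$ is a regular point of $FB(v_\infty)$ is correct and essentially reproves Theorem~\ref{CAMW}. The final step, however, has a genuine gap. You extract a second limit $\tilde v_\infty = \lim_k (v_k)_{\rho_k}$ and assert it must be a half-space ``because'' $v_\infty$ has $C^{1,\alpha}$ free boundary at $0$. This conflates rescalings of $v_\infty$ with rescalings of the $v_k$: Remark~\ref{rmk:GoodScale} and Theorem~\ref{RegPts} control $(v_\infty)_{\rho}$ as $\rho \downarrow 0$, not $(v_k)_{\rho_k}$. Concretely,
\[
  \bigl|(v_k)_{\rho_k}(x) - (v_\infty)_{\rho_k}(x)\bigr| \;\leq\; \rho_k^{-2}\,\|v_k - v_\infty\|_{L^\infty(B_1)},
\]
and nothing in your argument forces $\|v_k - v_\infty\|_{L^\infty} = o(\rho_k^{2})$; the rate of $C^{1,\alpha}$ convergence on $B_1$ is completely decoupled from the independently chosen scales $\rho_k \leq 1/k$. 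So $\tilde v_\infty$ could perfectly well be a global Laplacian solution with a \emph{singular} point at the origin and $|\Lambda(\tilde v_\infty) \cap B_1| = 0$, which is fully consistent with $|\Lambda(\tilde v_\infty) \cap B_1|/|B_1| \leq \tfrac{1}{2} - \epsilon$ and yields no contradiction. You correctly diagnosed the obstacle (``density is not monotone in $r$''), but the double-blowup workaround does not close it: what is missing is precisely a \emph{one-step improvement at a uniformly controlled smaller scale}, with constants independent of the solution, and that is the content of the theorem itself.

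The paper's proof is not by contradiction; it is constructive and iterative. One rescales so that $t = r_0$, takes $r_0$ small enough that the VMO-modulus makes $a^{ij}$ close to a constant matrix on $B_{s_0}$ with $s_0 = \sqrt{r_0}$, and applies measure stability (Theorem~\ref{BMSR}) to compare $w$ with the Laplacian obstacle solution $u$ on $B_{s_0}$ sharing its boundary data. The density bound transfers to $u$, giving $m.d.(\Lambda(u) \cap B_{r_0}) \geq C(n) r_0 \epsilon$, and then Theorem~\ref{C1Al} applied to $u$ yields quantitative $C^{1,\alpha}$ control of $FB(u)$ in $B_{r_0^2}$. This produces a fixed $\gamma < 1$, depending only on $n$ and $\epsilon$, with $|\Lambda(u) \cap B_{\gamma r_0}|/|B_{\gamma r_0}| > (1-\epsilon)/2$; a second application of measure stability pushes this back to $w$. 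Since $\tfrac{1}{2} - \epsilon > \epsilon$, the hypothesis is reproduced at scale $\gamma r_0$ and the step iterates, covering all $r \leq \tau r_0$ after finitely many rounds. The iteration is exactly what supplies the uniform link between scales that your compactness argument lacks.
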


\begin{remark}[Another version]  \label{AnoVer}
The theorem above is equivalent to a version using a modulus of continuity.  In that version there is a universal
modulus of continuity $\sigma$ such that
\begin{equation}
   \frac{|\Lambda(w) \cap B_{\tilde{t}}|}{|B_{\tilde{t}}|} \geq \sigma(\tilde{t})
\label{eq:OtherVersion}
\end{equation}
for any $\tilde{t}$ implies a uniform convergence of the density of $\Lambda(w)$ to $1/2$ once $B_{\tilde{t}}$
is scaled to $B_1.$  (Here we mean uniformly among all appropriate $w$'s.)
\end{remark}

\begin{pf}
We start by assuming that we have a $t$ such that Equation\refeqn{bigonce}holds, and by rescaling if necessary,
we can assume that $t = r_0.$  Next, by arguing exactly as in the last theorem, by assuming that $r_0$ is
sufficiently small, and by defining $s_0 := \sqrt{r_0},$ we can assume without loss of generality that
\begin{equation}
\myavinttwo{B_{s_0}} \left| a^{ij}(x) - \delta^{ij} \right| \; dx
\label{eq:atbt}
\end{equation}
is as small as we like.  Now we will follow the argument given for Theorem 4.5 in \cite{B} very closely.

Applying our measure stability theorem on the ball $B_{s_0}$ we have the existence of a function
$u$ which satisfies:
\BVPcbsn{\Delta u = \chisub{\{u > 0\}}}{u \equiv w}{newudef}
and so that 
\begin{equation}
|\{\Lambda(u) \Delta \Lambda(w)\} \cap B_{r_0}|
\label{eq:damnsmall}
\end{equation}
is small enough to guarantee that
\begin{equation}
   \frac{|\Lambda(u) \cap B_{r_0}|}{|B_{r_0}|} \geq \frac{\epsilon}{2} \;,
\label{eq:uLbigonce}
\end{equation}
and therefore
\begin{equation}
   m.d.(\Lambda(u) \cap B_{r_0}) \geq C(n) r_0 \epsilon \;.
\label{eq:uLmdbigonce}
\end{equation}
Now if $r_0$ is sufficiently small, then by the $C^{1,\alpha}$ regularity theorem (Theorem\refthm{C1Al}\!\!) 
we conclude that $\partial \Lambda(u)$ is $C^{1,\alpha}$ in
an $r_0^2$ neighborhood of the origin. Furthermore, if we rotate coordinates so that
$FB(u) = \{ (x', x_n) \; | \; x_n = f(x') \},$
then we have the following bound (in $B_{r_0^2}$):
  \begin{equation}
  ||f||_{_{C^{1,\alpha}}} \leq \frac{C(n)}{r_0} \;.
  \label{eq:fSmoothBd}
  \end{equation}
On the other hand, because of this bound, there exists a $\gamma < 1$ such
that if $\rho_0 := \gamma r_0 < r_0,$ then 
  \begin{equation}
    \frac{ | \Lambda(u) \cap B_{\rho_0} | }{| B_{\rho_0} | }
    \; > \; \frac{1 - \epsilon}{2} \;.
    \label{eq:BrideOfBigLamb}
  \end{equation}
Now by once again requiring $r_0$ to be sufficiently
small, we get
  \begin{equation}
    \frac{ | \Lambda(w) \cap B_{\rho_0} | }{| B_{\rho_0} | }
    \; > \; \frac{1}{2} - \epsilon \;.
    \label{eq:HoundOfBigLamb}
  \end{equation}
(So you may note that here our requirement on the size of $r_0$ will be much smaller than it was before; we need it small
both because of the hypotheses within Caffarelli's regularity theorems and because of the need to shrink the $L^p$ norm of
$|a^{ij} - \delta^{ij}|$ in order to use our measure stability theorem.)

Now since $\frac{1}{2} - \epsilon$ is strictly
greater than $\epsilon,$  
we can rescale $B_{\rho_0}$ to a ball with a radius
\textit{close to} $r_0,$ and then repeat.  Since we have
a little margin for error in our rescaling, after
we repeat this process enough times we will have
a small enough radius (which we call $\tau r_0$),
to ensure that for all $r \leq \tau r_0$ we have
$$\frac{ | \Lambda(w) \cap B_r |}{| B_r |} 
  \; > \; \frac{1}{2} - \epsilon \;.$$
\end{pf}

\begin{corollary}[The Set of Regular Points Is Open]  \label{TSoRPIO}
If we take $w$ as above, then the set of regular points of $FB(w)$ is an open subset of $FB(w).$
\end{corollary}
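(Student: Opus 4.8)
The plan is to derive this from the strong form of Caffarelli's Alternative in Measure, Theorem\refthm{CAMS}\!\!, together with the fact (Definition\refthm{RSFBP}\!\!) that a free boundary point is regular exactly when $\Lambda(w)$ has density $1/2$ there. The decisive point is that the constants $r_0$ and $\tau$ furnished by Theorem\refthm{CAMS}are \emph{uniform} over the relevant family of operators: retracing the proof, $r_0$ and $\tau$ depend on the coefficients only through $\lambda,\Lambda$ and through an upper bound for the VMO--modulus $\eta_{a^{ij}}$, since that dependence enters only via the a priori $W^{2,p}(B_1)$ bound that is fed into the compactness Lemma\refthm{BCL}\!\!, which in turn is produced by Theorem\refthm{parabound}and Corollary\refthm{CFLBRII}(and ultimately Theorem\refthm{CFLIR}\!\!), all of whose constants may be taken nondecreasing in $\eta_{a^{ij}}$. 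Consequently the same $r_0,\tau$ work for every operator obtained from ours by a translation and a dilation $x\mapsto x_1+\sigma x$ with $\sigma\le 1$: such a change of variables leaves $\lambda,\Lambda$ unchanged and does not increase $\eta_{a^{ij}}$ (the supremum in the definition of $\eta$ already runs over all centres, and $\eta$ is nondecreasing).

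Granting this, fix $\epsilon'\in\bigl(0,\min\{1/8,\,2^{-n-2}\}\bigr)$ and let $r_0,\tau\in(0,1)$ be the associated universal constants from Theorem\refthm{CAMS}\!\!. Let $x_0\in FB(w)$ be regular. I would first normalise $x_0$ to the origin: replacing $w$ by $W(\xi):=R^{-2}w(x_0+R\xi)$ with $R:=1-|x_0|$ gives a solution of the same obstacle problem on $B_1$, with coefficients in the class above and $0\in FB(W)$; moreover $W$ is regular at $0$ if and only if $w$ is regular at $x_0$, and in general regular points of $FB(W)$ near $0$ pull back to regular points of $FB(w)$ near $x_0$, because densities of balls are unchanged under translation and isotropic dilation. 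So assume $x_0=0$. Since $0$ is regular we have $|\Lambda(w)\cap B_r|/|B_r|\to\tfrac12$, so we may fix $t_*\in(0,r_0/8]$, small enough that all balls below lie in $B_1$, for which $|\Lambda(w)\cap B_r|/|B_r|\ge\tfrac14$ whenever $r\le 2t_*$.

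Now let $y\in FB(w)$ with $|y|<t_*/2$. For each $\rho\in[2|y|,2t_*]$ we have $B_{\rho-|y|}(0)\subset B_\rho(y)$ and $\rho-|y|\ge\rho/2$, so $|\Lambda(w)\cap B_\rho(y)|\ge\tfrac14|B_{\rho-|y|}|\ge 2^{-n-2}|B_\rho|$; thus $\Lambda(w)$ has density at least $2^{-n-2}>\epsilon'$ about $y$ at \emph{every} scale in $[2|y|,2t_*]$. Dilate $w$ about $y$ by the fixed factor $\tfrac34$ --- legitimate since $B_{3/4}(y)\subset\subset B_1$ for $|y|<t_*/2$ --- to get $\widehat w(\xi):=(\tfrac34)^{-2}w\bigl(y+\tfrac34\xi\bigr)$, an admissible solution on $B_1$ with $0\in FB(\widehat w)$, whose zero set has density $\ge 2^{-n-2}>\epsilon'$ at the scale $s:=\tfrac{8t_*}{3}$, because $\tfrac34 s=2t_*$ lies in the band above; and $s\le r_0/3<r_0$. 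Applying Theorem\refthm{CAMS}to $\widehat w$ with parameter $\epsilon'$ then forces $0$ to be a regular point of $FB(\widehat w)$, and undoing the dilation shows $y$ is a regular point of $FB(w)$. Hence $FB(w)\cap B_{t_*/2}(0)$ consists entirely of regular points, and unwinding the normalisation produces a relative neighbourhood of $x_0$ in $FB(w)$ of regular points; as $x_0$ was arbitrary, the regular set is open in $FB(w)$.

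I expect the uniformity claim of the first paragraph to be the principal obstacle: one must verify carefully that the constants in Theorem\refthm{CAMS}genuinely do not deteriorate under the translations and sub--unit dilations used, which reduces to the monotone dependence of the Chiarenza--Frasca--Longo a priori estimates on the VMO--modulus. Everything else is elementary ball geometry; the one point needing attention there is choosing the dilation about $y$ so that the transferred density lower bound is realised at a scale $\le r_0$ (and not merely $\le 1$), which is precisely why one exploits a whole band of scales at $0$ rather than a single one.
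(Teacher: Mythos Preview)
Your proof is correct and follows exactly the approach the paper intends---namely the argument of Corollary~4.8 in \cite{B} with Theorem\refthm{CAMS}substituted for its Laplacian analogue---only you have written out the details where the paper gives a one-line reference. In particular you correctly isolate and resolve the one point genuinely new to the VMO setting (and invisible in \cite{B}, where the operator is translation- and dilation-invariant): that the constants $r_0,\tau$ of Theorem\refthm{CAMS}must be uniform over the translated and sub-unit-dilated coefficient matrices arising in the argument, which holds because the dependence on the $a^{ij}$ enters only through $\lambda,\Lambda$ and an upper bound for the VMO-modulus.
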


\noindent
The proof of this corollary is identical to the proof of Corollary 4.8 in \cite{B} except that in place of using Theorem 4.5 of
\cite{B} we use Theorem\refthm{CAMS}\!\!.

\begin{corollary}[Persistent Regularity]  \label{PS}
Let $A^{ij}$ be a constant symmetric matrix with eigenvalues in $[\lambda, \Lambda].$ Let $w$ satisfy
$w \geq 0,$
$$A^{ij} D_{ij} w = \chisub{ \{ w > 0 \} } ,$$
and assume that $FB(w) \cap B_{3/4}$ is $C^{1,\alpha}.$  If $a^{ij}(x) \in \text{VMO} \cap L^{\infty}(B_1),$
and
$$||a^{ij} - A^{ij}||_{L^{q}(B_1)}$$
is sufficiently small, then the solution, $w_a,$ to the obstacle problem:
$$w_{a} \geq 0 \;, \ \ \ \ a^{ij}(x) D_{ij} w_{a}(x) = \chisub{ \{ w_a > 0 \} }(x) , \ \ \ \ w_{a} = w \ \text{on} \ \partial B_1$$
has a regular free boundary in $B_{1/2}.$  (In other words the density of $\Lambda(w_a)$ is equal to $1/2$ at every
$x \in FB(w_a) \cap B_{1/2}.$)
\end{corollary}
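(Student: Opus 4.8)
The plan is to argue by contradiction along a sequence of matrices $a^{ij}$ converging to $A^{ij}$ in $L^q$, using the Basic Measure Stability Result (Theorem~4.4) to control the zero sets and Caffarelli's results on regular points (Theorems~2.7 and~2.8) for the unperturbed solution $w$. First I would fix the hypotheses: $w$ is the given global-type solution with constant coefficients $A^{ij}$ and $FB(w)\cap B_{3/4}$ a $C^{1,\alpha}$ graph, which in particular means every point of $FB(w)\cap B_{3/4}$ is a regular point in the sense of Caffarelli, and by Theorem~2.6(b) (the uniform bound~\refeqn{MDReg}) the minimum diameter of $\Lambda(w)$ in small balls around any such free boundary point is bounded below linearly: there is a fixed $c_0>0$ with $m.d.(\Lambda(w)\cap B_\rho(x))\ge c_0\rho$ for all $x\in FB(w)\cap B_{1/2}$ and all $\rho$ below some fixed $\rho_1$. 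Equivalently, the density of $\Lambda(w)$ in such balls is bounded below by a fixed $\delta_0>0$.

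Next I would suppose the corollary fails. Then there exist constants $a^{ij}_k\in\text{VMO}\cap L^\infty(B_1)$ with $\|a^{ij}_k-A^{ij}\|_{L^q(B_1)}\to 0$, solutions $w_{a,k}$ of the perturbed obstacle problem with boundary data $w$, and points $x_k\in FB(w_{a,k})\cap B_{1/2}$ at which the density of $\Lambda(w_{a,k})$ is \emph{not} $1/2$, hence (by the weak-form alternative applied to $w_{a,k}$, or by Caffarelli's alternative after we localize) the density at $x_k$ is $0$. Passing to a subsequence, $x_k\to x_\infty\in\overline{B_{1/2}}$. Since $A^{ij}$ is already constant, I can apply the measure stability theorem (Theorem~4.4) with $w_{a,k}$ in the role of ``$w$'' and $w$ in the role of ``$u$'': the hypotheses match because both solve the obstacle problem on $B_1$ with the same boundary data $w$, $\|a^{ij}_k-A^{ij}\|_{L^q}$ is small, and $\|w_{a,k}\|_{W^{2,p}(B_1)}$ is uniformly bounded by Corollary~2.11 (Boundary Regularity II) since the boundary datum is fixed. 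This gives $|\{\Lambda(w_{a,k})\,\Delta\,\Lambda(w)\}\cap B_1|\le\sigma(\epsilon_k)\to 0$, and Corollary~4.5 gives $\|w_{a,k}-w\|_{L^\infty(B_1)}\to 0$.

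Now I would derive the contradiction locally near $x_\infty$. Because $x_k\to x_\infty$ and $w_{a,k}\to w$ uniformly with $w_{a,k}(x_k)=0$, we get $w(x_\infty)=0$; and nondegeneracy (Theorem~3.1) applied to $w_{a,k}$ at $x_k$ forces $w$ to fail to be bounded below by a positive multiple of $r^2$ near $x_\infty$, so $x_\infty\in FB(w)\cap\overline{B_{1/2}}$ (one small technical point is handling $x_\infty\in\partial B_{1/2}$, which one absorbs by shrinking the radius in the statement slightly or by noting the estimates hold on a marginally larger ball). Fix a radius $\rho<\rho_1$. On one hand, $x_\infty$ being a regular point of $FB(w)$ gives, via Theorem~2.6, $|\Lambda(w)\cap B_\rho(x_\infty)|\ge\delta_0|B_\rho|$. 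On the other hand, $|\Lambda(w_{a,k})\,\Delta\,\Lambda(w)|\to 0$ implies $|\Lambda(w_{a,k})\cap B_\rho(x_k)|\ge\tfrac{\delta_0}{2}|B_\rho|$ for $k$ large; but then Theorem~4.6 (or the strong-form alternative, Theorem~4.8, applied to $w_{a,k}$ centered at $x_k$ after translating $x_k$ to the origin) forces the density of $\Lambda(w_{a,k})$ at $x_k$ to converge to $1/2$, not to $0$ --- contradicting the choice of $x_k$. (The application of Theorem~4.8 at $x_k$ rather than the origin is legitimate since that theorem's constants $r_0,\tau$ depend only on the $a^{ij}$ and not on the solution or on the free boundary point, once we recenter.)

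The main obstacle I anticipate is the bookkeeping needed to translate Caffarelli's quantitative regular-point estimates (stated in Theorems~2.6--2.8 for solutions of $\Delta w=\gamma\chi_{\{w>0\}}$ centered at the origin) into the uniform lower density bound $\delta_0$ for $\Lambda(w)$ at \emph{every} free boundary point of $w$ in $B_{1/2}$, including points near $\partial B_{1/2}$, together with the care required so that the smallness of $\|a^{ij}_k-A^{ij}\|_{L^q}$ needed to invoke Theorem~4.8 at $x_k$ is uniform in $k$ --- but since those constants are explicitly claimed to be independent of the solution, this is a matter of careful invocation rather than new ideas.
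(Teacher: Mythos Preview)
Your core idea --- show that $\Lambda(w_a)$ has a definite density at some fixed small scale around each free boundary point of $w_a$, then invoke the strong-form Caffarelli alternative (Theorem~\ref{CAMS}) to conclude regularity --- is exactly the paper's strategy, and your steps~4--6 are essentially the computation the paper carries out. The problem is the contradiction wrapper, which introduces a real gap that you have dismissed too quickly in your final paragraph.

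The constants $r_0$ and $\tau$ in Theorem~\ref{CAMS} are explicitly stated to depend on the $a^{ij}$, and in the proof of that theorem this dependence enters through the VMO modulus: one needs $r_0$ small enough that the averaged oscillation of $a^{ij}$ on $B_{\sqrt{r_0}}$ is small. Along your contradiction sequence the VMO moduli of the $a^{ij}_k$ are \emph{not} controlled --- only their $L^q$ distance to $A^{ij}$ is --- so the $r_0^{(k)}$ supplied by Theorem~\ref{CAMS} for the coefficients $a^{ij}_k$ may tend to zero. Your fixed scale $\rho$ then need not satisfy $\rho\le r_0^{(k)}$ for large $k$, and you cannot conclude that $x_k$ is regular. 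Your observation that ``those constants are explicitly claimed to be independent of the solution'' is correct but beside the point: they depend on the coefficients, and the coefficients vary along your sequence. (The same issue infects your uniform $W^{2,p}$ bound via Corollary~\ref{CFLBRII}, whose constant also depends on the VMO moduli of the $a^{ij}_k$.)

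The paper's proof avoids this by arguing \emph{directly} for a single fixed $a^{ij}\in\text{VMO}$. One first chooses a scale $\tilde\beta$ small enough relative to both the $C^{1,\alpha}$ norm of $FB(w)$ \emph{and} the $r_0$ from Theorem~\ref{CAMS} for that particular $a^{ij}$; only then is the smallness threshold on $\|a^{ij}-A^{ij}\|_{L^q}$ determined (so as to force, via Corollary~\ref{UnifStab} and nondegeneracy, the inclusion $FB(w_a)\cap B_{5/8}\subset FB(w)_{\tilde\epsilon\tilde\beta}$). The density estimate at scale $\tilde\beta(1+\tilde\epsilon)$ around any $y_0\in FB(w_a)\cap B_{1/2}$ is then obtained by comparing $\Lambda(w_a)$ with $\Lambda(w)$ around the nearest point $x_0\in FB(w)$ --- essentially your computation, run once for a fixed $a^{ij}$ rather than along a sequence. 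If you strip away the contradiction framework and argue this way, your proof goes through; as written, it does not.
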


\begin{pf}
We start by observing that by Theorem\refthm{RegPts}there will be a neighborhood of $FB(w) \cap B_{5/8}$ where $w(x)$
will satisfy:
\begin{equation}
\gamma^{-1} \cdot \text{dist}(x, \Lambda(w))^2 \leq w(x) \leq \gamma \cdot \text{dist}(x, \Lambda(w))^2 \;,
\label{eq:goodnbhd}
\end{equation}
for a constant $\gamma > 0.$  By the same theorem, the size of this neighborhood will be bounded from below by a
constant, $\beta,$ which depends only on the $C^{1,\alpha}$ norm of $FB(w) \cap B_{3/4}.$  In other words,
Equation\refeqn{goodnbhd}will hold for all $x \in \Lambda(w)_{\beta} \cap B_{5/8}.$  On the other hand, in
$\Lambda(w)_{\beta}^c \; \cap \tclosure{B_{5/8}}$ the function $w$ will attain a positive minimum.
By applying Corollary\refthm{UnifStab}to guarantee that $$||w- w_a||_{L^{\infty}(B_1)}$$
is as small as we like, we 
can ensure that $w_a > 0$ in $\Lambda(w)_{\beta}^c \; \cap \tclosure{B_{5/8}},$ and so
$FB(w_a) \subset \Lambda(w)_{\beta}.$  By using Theorem\refthm{NonDeg}applied to $w_a,$ we can even guarantee that
\begin{equation}
FB(w_a) \cap B_{5/8} \subset FB(w)_{\beta} \;.
\label{eq:gooddistbeta}
\end{equation}

Now fix $0 < \tilde{\epsilon} <\!\!< \epsilon \leq 1/100.$
We choose $\tilde{\beta} < \beta$ based on the $C^{1,\alpha}$ norm of $FB(w)$ to ensure that for any $x_0 \in FB(w) \cap B_{5/8}$
and any $r \in (0, \tilde{\beta}]$ we have the inequality:
\begin{equation}
\left| \frac{|B_{r}(x_0) \cap \Lambda(w)|}{|B_{r}(x_0)|} - \frac{1}{2} \right| < \epsilon \;.
\label{eq:goodrat}
\end{equation}
Arguing exactly as above and shrinking $||a^{ij} - A^{ij}||_{L^q(B_1)}$ if necessary, we can now guarantee that
\begin{equation}
FB(w_a) \cap B_{5/8} \subset FB(w)_{(\tilde{\epsilon}\tilde{\beta})} \;.
\label{eq:gooddisttildebeta}
\end{equation}

Now pick a $y_0 \in FB(w_a) \cap B_{1/2},$ and let $x_0$ be a point in $FB(w)$ which minimizes distance to $y_0.$
Observe that
$$B_{\tilde{\beta}}(x_0) \subset B_{\tilde{\beta}(1 + \tilde{\epsilon})}(y_0)$$
and by $C^{1,\alpha}$ regularity we know
$$|FB(w)_{(\tilde{\epsilon}\tilde{\beta})} \cap B_{\tilde{\beta}}(x_0)| \leq C(n) \tilde{\epsilon} \tilde{\beta}^n \;.$$
\includegraphics{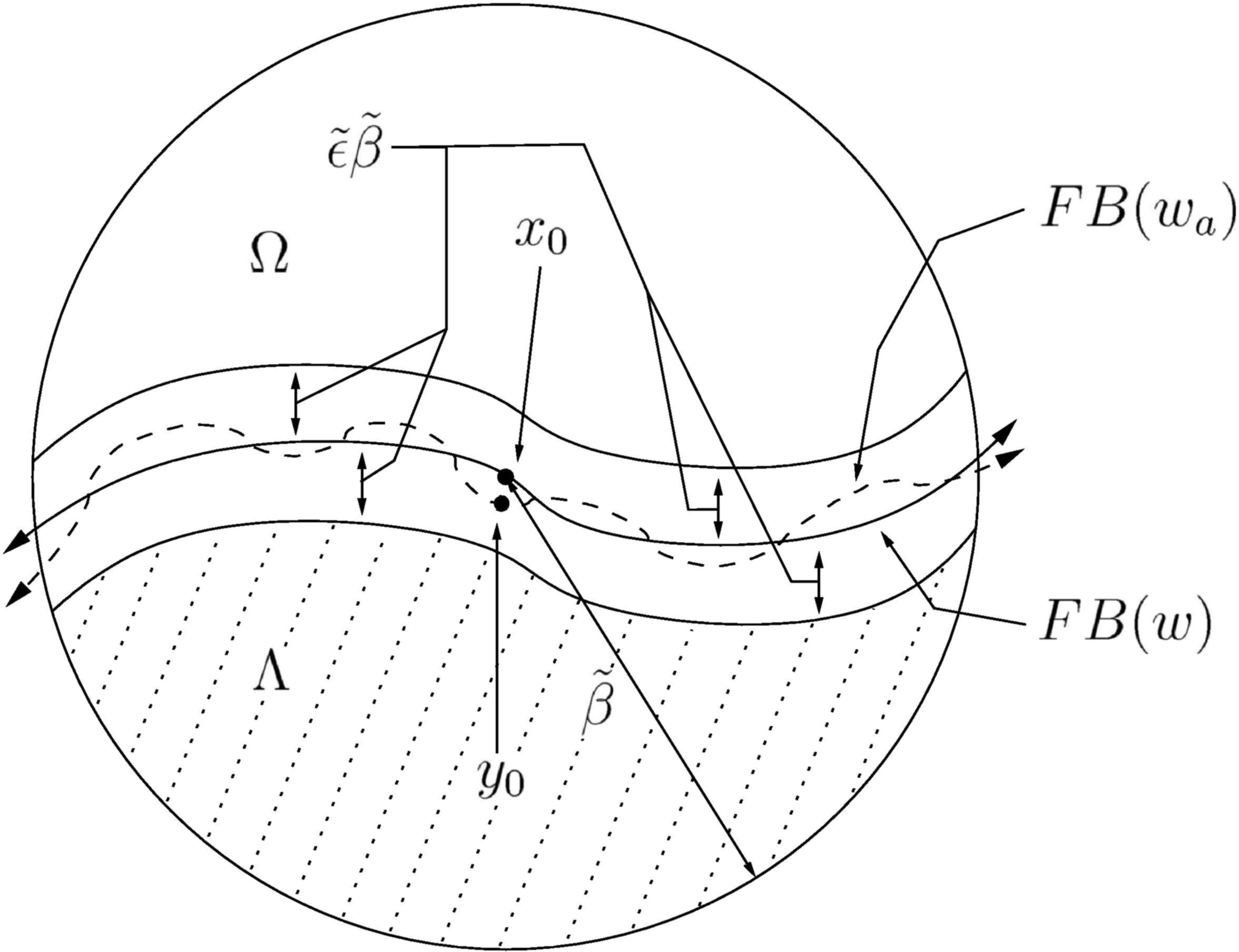} \newline \newline \noindent
In the figure, the region which is dotted represents the region
$$\Lambda(w) \cap B_{\tilde{\beta}}(x_0) \setminus FB(w)_{\tilde{\epsilon}\tilde{\beta}}$$
and it is necessarily a subset of $\Lambda(w_a) \cap B_{\tilde{\beta}(1 + \tilde{\epsilon})}(y_0).$
Using these observations along with Equations\refeqn{gooddisttildebeta}and\refeqn{goodrat}we estimate:
\begin{alignat*}{1}
\displaystyle{\frac{|\Lambda(w_a) \cap B_{\tilde{\beta}(1 + \tilde{\epsilon})}(y_0)|}{|B_{\tilde{\beta}(1 + \tilde{\epsilon})}(y_0)|}} \;
   &\displaystyle{\geq \frac{|\Lambda(w) \cap B_{\tilde{\beta}}(x_0)| 
                             - C(n)\tilde{\epsilon} \tilde{\beta}^n}{|B_{\tilde{\beta}(1 + \tilde{\epsilon})}(x_0)|} } \\
   &\displaystyle{= \frac{1}{(1 + \tilde{\epsilon})^n} \cdot \frac{|\Lambda(w) \cap B_{\tilde{\beta}}(x_0)| 
                             - C(n)\tilde{\epsilon} \tilde{\beta}^n}{|B_{\tilde{\beta}}(x_0)|} } \\
   &\displaystyle{\geq \frac{1}{(1 + \tilde{\epsilon})^n} \cdot \left[ \left(\frac{1}{2} - \epsilon\right) - C(n)\tilde{\epsilon} \right] } \\
   &\displaystyle{\geq 1/4} \;,
\end{alignat*}
as long as we choose our constants sufficiently small.  Now by shrinking the value of $\tilde{\beta}$ (if necessary) to be 
sure that $\tilde{\beta}(1 + \tilde{\epsilon})$ is less than the $r_0$ given in Theorem\refthm{CAMS}we can be sure that
$y_0$ is a regular point of $FB(w_a).$
\end{pf}


\newsec{An Important Counter-Example}{ICE}
Now we will give an example of a solution to an obstacle problem of the type we have been studying above which has more
than one blowup limit at the origin.  The first step will be to construct a convenient discontinuous function in
$\text{VMO} \cap L^{\infty}(B_1).$  

We define the function $f_k(x)$ by letting $f_k(x):= \gamma_k(|x|)$ where $\gamma_k(r)$ is defined by 
\begin{equation}
\gamma_k(r) := \left\{
    \begin{array}{cl}
        \displaystyle{2} \ \ \ \ &\displaystyle{\text{for} \ r \geq \omega_k}  \\
        \ & \ \\
        \displaystyle{\frac{5 + \cos( \pi \log | \log r | )}{2}} \ \ \ \ &\displaystyle{\text{for} \ r  <  \omega_k}
    \end{array} \right.
\label{eq:rkdefin}
\end{equation}
and $\omega_k := \exp( - \exp( 2k+1 ) )\;.$  (Note $\omega_k \downarrow 0,$ as $k \rightarrow \infty.$)
Now we observe the following properties:
\begin{enumerate}
   \item $$2 \leq f_k \leq 3 \ \text{in} \ B_1,$$
   \item $$\text{for any} \ q < \infty, \ \ \lim_{k \rightarrow \infty} ||f_k - 2||_{L^q(B_1)} = 0 \;, \ \ \text{and}$$ 
   \item $$\lim_{r \downarrow 0} r\gamma_k^{\prime}(r) = 0 \;.$$
\end{enumerate}
It now follows from a Theorem of Bramanti (using the first and third propery above) that $f_k(x) \in \text{VMO}(B_1).$
(See Theorem\refthm{RVMOBram}\!\!.)
Since we were not able to find this theorem published elsewhere we will include the proof in an appendix.  (This proof is
due to Bramanti and is found in his PhD dissertation: Commutators of singular integrals and parabolic equations with VMO
coefficients. Ph.D. Thesis, University of Milano, Italy, 1993. \cite{Br})

Now we define $a^{ij,k}(x) := f_k(x) \delta^{ij},$ and $p_{\beta}(x) := \frac{1}{4}((x_n - \beta)_{+})^{2}.$
Observe that $p_{\beta}$ solves the obstacle problem:
$$2\Delta w = \chisub{ \{ w > 0 \} } \;,$$
and $FB(p_{\beta}) = \{ x_n = \beta \} \;.$  Now for $-1/10 \leq \beta \leq 1/10$ and $k \in \N,$ we let $w_{\beta, k}$
denote the solution to the obstacle problem:
$$w \geq 0, \ \ \ a^{ij,k}(x) D_{ij} w = \chisub{ \{ w > 0 \} } \ \text{in} \ B_1, \ \ \ w(x) = p_{\beta}(x) \ \text{on} \ \partial B_1 \;.$$
Now we observe that
$$2\Delta(p_{\beta} - w_{\beta, k}) = 
     \chisub{ \{ p_{\beta} > 0 \} } - (2\delta^{ij} - a^{ij,k})D_{ij} w_{\beta, k} - \chisub{ \{ w_{\beta, k} > 0 \} } \;,$$
and so, since $||D_{ij} w_{\beta, k}||_{L^{2p}(B_{1/2})} \leq C$ which is independent of $k,$ and
since $(2\delta^{ij} - a^{ij,k}(x))$ vanishes outside of $B_{\omega_k},$ we have
\begin{alignat*}{1}
    ||2\Delta(p_{\beta} - w_{\beta, k})||_{L^{p}(B_1)}
        &\leq ||\chisub{ \{ p_{\beta} > 0 \} }  - \chisub{ \{ w_{\beta, k} > 0 \} }||_{L^{p}(B_1)} \\
         & \ \ \ \ + ||(2\delta^{ij} - a^{ij,k})D_{ij} w_{\beta, k}||_{L^{p}(B_1)}  \\ 
        &=|\{ \Lambda(p_{\beta}) \Delta \Lambda(w_{\beta, k}) \cap B_1|^{1/p} \\
         & \ \ \ \ + ||(2\delta^{ij} - a^{ij,k})D_{ij} w_{\beta, k}||_{L^{p}(B_{\omega_k})} \\
        &\leq|\{ \Lambda(p_{\beta}) \Delta \Lambda(w_{\beta, k}) \cap B_1|^{1/p} \\
         & \ \ \ \ + ||D_{ij} w_{\beta, k})||_{L^{2p}(B_{1/2})} \cdot ||(2\delta^{ij} - a^{ij,k})||_{L^{2p}(B_{\omega_k})} \\
        &\leq|\{ \Lambda(p_{\beta}) \Delta \Lambda(w_{\beta, k}) \cap B_1|^{1/p} \\
         & \ \ \ \ + C||(2\delta^{ij} - a^{ij,k})||_{L^{2p}(B_{\omega_k})}
\end{alignat*}
The first term can be made as small as we like by letting $k$ be very large and then by using measure stability, and the 
second term can be made as small as we like by letting $k$ be very large and by observing that
$$||(2\delta^{ij} - a^{ij,k})||_{L^{2p}(B_{\omega_k})} \leq |B_{\omega_k}|^{1/2p}.$$
Since $(p_{\beta} - w_{\beta, k}) \in W^{2,p}(B_1) \cap W^{1,p}_0(B_1)$
we can use Lemma 9.17 of \cite{GT} to guarantee that
$||p_{\beta} - w_{\beta, k}||_{W^{2,p}(B_1)}$ is as small as we like for any $p < \infty$ and therefore by the Sobolev
embedding
\begin{equation}
     ||p_{\beta} - w_{\beta, k}||_{L^{\infty}(B_1)} \ \  \text{is as small as we like.}
\label{eq:linfest}
\end{equation}
(We have not hesitated to increase $k.$)

By using Corollary\refthm{PS}\!\!, if we let $k$ be sufficiently large, then every $x \in FB(w_{\beta, k}) \cap B_{1/2}$
is a regular free boundary point.  (Here we mean ``regular'' in the sense of definition\refthm{RSFBP}\!\!.)  By applying Theorem\refthm{NonDeg}to
the $w_{\beta, k}$ and by using\refeqn{linfest}\!\!, we can assert that for all $\beta \in [-1/10,1/10],$ as long as
$k$ is sufficiently large, $$FB(w_{\beta, k}) \cap B_{1/2} \subset \{ \beta - 1/100 < x_n \}.$$  Now by observing that
$p_\beta(x) \geq (1/4) \cdot (1/100)^2$ in the set $\{ \beta + 1/100 \leq x_n \},$ we can use\refeqn{linfest}again, to guarantee
that $$FB(w_{\beta, k}) \cap B_{1/2} \subset \{ x_n < \beta + 1/100 \}$$ as long as $k$ is sufficiently large.  Thus
$$FB(w_{\beta, k}) \cap B_{1/2} \subset \{ \beta - 1/100 < x_n < \beta + 1/100 \}.$$
Arguing similarly, we can assert that
$$FB(p_\beta) \cap B_{1/2} \subset \{ (FB(w_{\beta, k}) \cap B_{1/2}) \}_{1/100} \;,$$
where for any $S \subset \R^n,$ we let $S_r$ denote the $r-neighborhood$ of the set $S.$

Now we claim that there exists a $\beta_0$ such that $0 \in FB(w_{\beta_0, k}),$ and since our function $f_k(x)$ oscillates
between $2$ and $3$ infinitely many times as we zoom in toward the origin, we can apply Theorem\refthm{EBL}to guarantee the
existence of different blowup limits.  To establish the claim, start by letting $\beta_0$ be the infimum of the $\beta$ such that
$0 \in \Lambda(w_{\beta,k}).$  It follows from Theorem\refthm{WCP}that for $-1/10 \leq \beta < \beta_0$ we have
$w_{\beta,k}(0) > 0,$ and for $\beta_0 \leq \beta \leq 1/10$ we have $w_{\beta,k}(0) = 0.$  (It follows from
Theorem\refthm{WCP}that if the boundary data is converging uniformly, then the solutions are converging uniformly, so
in particular, $w_{\beta_0,k}(0)=0.$)
Now if $0 \in FB(w_{\beta_0,k})$ then we are done.  On the other hand, since the $FB(w_{\beta_0,k})$ is a closed set,
if $0$ does not belong to $FB(w_{\beta_0,k}),$ then there exists an $\tilde{r} > 0$ such that
$\closure{B_{\tilde{r}}} \; \subset \Lambda(w_{\beta_0,k}).$

Now define $\beta_n := \beta_0 - 20^{-n}$ and observe that by Theorem\refthm{WCP}we know that
$w_n(x) := w_{\beta_n,k}(x)$ will converge uniformly
to $w_0(x) := w_{\beta_0,k},$ which is equal to zero on all of $B_{\tilde{r}}.$  On the other hand, since
$w_n(0) > 0$ we can apply Theorem\refthm{NonDeg}at the origin to conclude that
$$\sup_{B_{\tilde{r}}} w_n(x) \geq C\tilde{r}^2 \;,$$
and this fact makes uniform convergence to zero impossible, so the claim has been established.  We summarize this work in the
following theorem.

\begin{theorem}[Counter-Example]   \label{CountEx}
There exists $a^{ij} \in \text{VMO}(B_1)$ which satisfies Equation\refeqn{UniformEllip}with $\lambda = 2$ and
$\Lambda = 3,$ there exists a nonnegative solution $w(x)$ to Equation\refeqn{BasicProb}with this matrix $a^{ij},$
and there exists $\{r_n\} \downarrow 0$ such that 
$$\lim_{n \rightarrow \infty} w_{r_{2n + 1}}(x) = \frac{1}{4}((x_n - \beta)_{+})^{2}$$
and
$$\lim_{n \rightarrow \infty} w_{r_{2n}}(x) = \frac{1}{9}\tau(((x_n - \beta)_{+})^{2})$$
where $\tau$ is a rotation, and where the limits have the same convergence as in Theorem\refthm{EBL}(and where as
usual we let $w_{\epsilon}(x) := \epsilon^{-2}w(\epsilon x)$).
\end{theorem}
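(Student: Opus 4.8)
The plan is to assemble into a single example the pieces already built in the preceding pages; almost all of the work is done, so what remains is to fix the free parameters and to identify the two blowup limits. First I would fix $k\in\N$ once and for all, taken large enough that the $L^{q}(B_{1})$-distance between $a^{ij,k}=f_{k}\delta^{ij}$ and $2\delta^{ij}$ is small enough for every smallness requirement invoked above to hold simultaneously --- those of the basic measure stability theorem (Theorem\refthm{BMSR}), of uniform stability (Corollary\refthm{UnifStab}), of persistent regularity (Corollary\refthm{PS}), and of the estimate\refeqn{linfest} --- which is possible by the second listed property of the $f_{k}$. Set $a^{ij}:=a^{ij,k}$; by Bramanti's theorem (Theorem\refthm{RVMOBram}) together with the first and third properties of $f_{k}$ this belongs to $\text{VMO}(B_{1})\cap L^{\infty}(B_{1})$, and by construction it satisfies\refeqn{UniformEllip} with $\lambda=2$ and $\Lambda=3$. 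I would then take $\beta_{0}$ to be the infimum of those $\beta\in[-1/10,1/10]$ for which $w_{\beta,k}(0)=0$ and put $w:=w_{\beta_{0},k}$: monotonicity from the weak comparison principle (Theorem\refthm{WCP}) shows that this set of $\beta$'s is a closed half-interval and that $w_{\beta_{0},k}(0)=0$, while applying the nondegeneracy theorem (Theorem\refthm{NonDeg}) along $\beta_{n}:=\beta_{0}-20^{-n}$ rules out $0$ lying in the interior of $\Lambda(w)$; hence $0\in FB(w)$, and $w$ is the desired nonnegative solution of\refeqn{BasicProb}.

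Next I would choose the scales along which to blow up. Setting $r_{m}:=\exp(-\exp(m))$ arranges $\log|\log r_{m}|=m$, so that for $m$ large enough that $r_{m}<\omega_{k}$ a short computation --- using $|\log(r_{m}|x|)|=e^{m}-\log|x|=e^{m}(1+o(1))$ on a fixed compact set --- shows $f_{k}(r_{m}x)=\gamma_{k}(r_{m}|x|)\to\tfrac{5+(-1)^{m}}{2}$ uniformly on compact subsets of $\R^{n}$; integrating the radial profile against the weight $ns^{n-1}\,ds$ then shows that the averages of $a^{ij}$ over $B_{r_{m}}$ tend to $3\delta^{ij}$ along even $m$ and to $2\delta^{ij}$ along odd $m$. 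I would then apply Theorem\refthm{EBL} twice, once along $\{r_{2n}\}$ and once along $\{r_{2n+1}\}$, to obtain (after passing to further subsequences, which I suppress) blowup limits $w_{\infty}^{e}$ and $w_{\infty}^{o}$, enjoying the mode of convergence asserted in Theorem\refthm{EBL}, which satisfy $3\Delta w_{\infty}^{e}=\chisub{\{w_{\infty}^{e}>0\}}$ and $2\Delta w_{\infty}^{o}=\chisub{\{w_{\infty}^{o}>0\}}$ on all of $\R^{n}$, each with $0$ in its free boundary.

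The remaining, and decisive, step is to identify these two limits, and this is where I expect the real difficulty to lie. Because $p_{\beta_{0}}$ solves the obstacle problem for $2\delta^{ij}$ with the flat --- hence $C^{1,\alpha}$ --- free boundary $\{x_{n}=\beta_{0}\}$, Corollary\refthm{PS} applies for $k$ large and shows that every point of $FB(w)\cap B_{1/2}$ is a regular point in the sense of Definition\refthm{RSFBP}; in particular $\Lambda(w)$ has density $1/2$ at the origin, and by the measure stability theorem this density is inherited by $w_{\infty}^{e}$ and $w_{\infty}^{o}$ at the origin. A global solution of a constant-coefficient obstacle problem possessing a free boundary point of density $1/2$ is a half-space solution --- the only competing possibility, a positive semidefinite quadratic polynomial, has density $0$ there --- so, after an orthogonal change of coordinates, $w_{\infty}^{o}$ and $w_{\infty}^{e}$ take the asserted forms $\tfrac14((x_{n}-\beta)_{+})^{2}$ and $\tfrac19\tau(((x_{n}-\beta)_{+})^{2})$, with $\beta=0$ because $0\in FB$ and with a rotation $\tau$ appearing because the directions of the two limiting half-spaces need not coincide. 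Since the two limits solve obstacle problems with different constant coefficients they are genuinely distinct, which gives the asserted non-uniqueness. The one point that is not routine here is precisely knowing that $0$ is a genuine regular point: this is what Corollary\refthm{PS} --- and behind it the measure stability theorem and the strong Caffarelli alternative (Theorem\refthm{CAMS}) --- is there to supply, and it is the reason $k$ must be chosen large; the subsequence extractions, the VMO-average computation for the scales $r_{m}$, and the continuity-and-monotonicity argument producing $\beta_{0}$ are all routine given what has been established above.
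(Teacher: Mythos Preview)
Your proof is correct and follows essentially the same strategy as the paper: construct the oscillating VMO coefficient $a^{ij}=f_k\delta^{ij}$, locate $\beta_0$ via the weak comparison principle and nondegeneracy so that $0\in FB(w_{\beta_0,k})$, invoke Persistent Regularity to ensure that $0$ is a regular point, and then blow up along scales where the averages of $a^{ij}$ converge to $2\delta^{ij}$ and to $3\delta^{ij}$ respectively. You are in fact more explicit than the paper in two places --- in choosing the specific scales $r_m=\exp(-\exp m)$ and computing the limiting averages, and in invoking the classification of global constant-coefficient solutions (half-space versus quadratic polynomial) to pin down the blowup limits --- whereas the paper leaves that final identification largely to the reader.
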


\newsec{Appendix}{Appendix}

This theorem and its proof are due to Bramanti.  On the other hand, since we have altered the exposition slightly, if
there are any mistakes, then they are due to the authors of this paper and not due to Bramanti.

\begin{theorem}[Radial VMO]   \label{RVMOBram}
Let $f:(0,R]\rightarrow \R$, $f\in C^{1}(0,R],$ and assume the following:
\begin{enumerate}
\item $f\in L^{2}(0,R) $

\item $xf(x) ^{2}\rightarrow 0$ for $x\rightarrow 0^{+}$

\item $xf^{\prime }(x) \rightarrow 0$ for $x\rightarrow 0^{+}$

\item $\frac{1}{r}\int_{0}^{r}x(f(r) - f(x)) f^{\prime }(x) dx\rightarrow 0$
         as $r\downarrow 0.$

\end{enumerate}
(Note that if $f$ is bounded, then it is enough to assume 3).

Let $u:B_{R}(0) \subset \R^{n}\rightarrow \R$
$$u(x) =f(|x|) .$$
Then $u\in \text{VMO}(B_{R}(0)) $.
\end{theorem}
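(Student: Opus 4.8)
The plan is to bound the VMO-modulus $\eta_u(\rho)=\sup_{\sigma\le\rho,\,y}\tfrac{1}{|B_\sigma|}\int_{B_\sigma(y)}|u-u_{B_\sigma(y)}|\,dx$ and show it tends to $0$ as $\rho\downarrow 0$. The key preliminary step, which I expect to be the heart of the matter, is to rewrite hypothesis (4) in a transparent form. Integrating by parts in $\int_0^r x f'(x)\,dx$ and in $\int_0^r x f(x) f'(x)\,dx=\tfrac12\int_0^r x\,(f^2)'(x)\,dx$ — legitimate because (1) makes $\int_0^r f$ and $\int_0^r f^2$ finite, and because $x f(x)^2\to 0$ by (2) (which also forces $x f(x)\to 0$) makes all boundary terms at the origin vanish — one obtains the identity
\[
\frac{1}{r}\int_0^r x\,(f(r)-f(x))\,f'(x)\,dx \;=\; \frac{1}{2r}\int_0^r (f(r)-f(x))^2\,dx .
\]
Hence (4) is equivalent to $\tfrac1r\int_0^r(f(r)-f(x))^2\,dx\to 0$ as $r\downarrow 0$, a one-dimensional, $L^2$-weighted radial form of vanishing mean oscillation. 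Everything after this is the transfer of this estimate to $n$-dimensional balls.

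Next I would dispose of balls far from the origin relative to their radius. If $B_\sigma(y)\subset B_R(0)$ with $|y|\ge 2\sigma$, then $|x|\in[|y|/2,R]$ on $B_\sigma(y)$, $u=f(|\cdot|)$ is $C^1$ there with $|\nabla u(x)|=|f'(|x|)|$, and
\[
\operatorname{osc}_{B_\sigma(y)} u \;\le\; 2\sigma\sup_{|y|-\sigma\le s\le |y|+\sigma}|f'(s)| \;\le\; \frac{4\sigma}{|y|}\sup_{|y|/2\le s\le |y|+\sigma}s\,|f'(s)| .
\]
Splitting into the subcase $|y|\ge\delta_0$ (where $\sup_{\delta_0/2\le s\le R}s\,|f'(s)|$ is a finite constant and the prefactor $\sigma$ drives the bound to $0$) and the subcase $|y|<\delta_0$ together with $\sigma\le|y|/2$ (where $\tfrac{2\sigma}{|y|}\le 1$ and $\sup_{s\le 3\delta_0/2}s\,|f'(s)|$ is as small as we like by (3) once $\delta_0$ is small) makes $\operatorname{osc}_{B_\sigma(y)}u$, hence the mean oscillation, less than any prescribed $\epsilon$ once $\sigma$ is small, uniformly over all such $y$.

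For balls close to the origin, $|y|<2\sigma$, I would use the standard inflation trick: $B_\sigma(y)\subset B_{3\sigma}(0)$, so for any constant $c$,
\[
\frac{1}{|B_\sigma(y)|}\int_{B_\sigma(y)}|u-u_{B_\sigma(y)}|\,dx \;\le\; \frac{2}{|B_\sigma(y)|}\int_{B_{3\sigma}(0)}|u-c|\,dx \;=\; 2\cdot 3^{n}\,\frac{1}{|B_{3\sigma}(0)|}\int_{B_{3\sigma}(0)}|u-c|\,dx,
\]
and choosing $c=u_{B_{3\sigma}(0)}$ reduces matters to balls centered at the origin. Passing to polar coordinates, $\tfrac{1}{|B_{R'}(0)|}\int_{B_{R'}(0)}|u-u_{B_{R'}(0)}|\,dx$ equals the weighted radial average $\tfrac{n}{{R'}^{n}}\int_0^{R'}|f(s)-\bar f_{R'}|\,s^{n-1}\,ds$ with $\bar f_{R'}=\tfrac{n}{{R'}^{n}}\int_0^{R'}f(s)\,s^{n-1}\,ds$; by the triangle inequality this is at most $\tfrac{2n}{{R'}^{n}}\int_0^{R'}|f(s)-f(R')|\,s^{n-1}\,ds$, and Cauchy--Schwarz against the weight $s^{n-1}$ bounds it by $C(n)\bigl(\tfrac{1}{R'}\int_0^{R'}(f(s)-f(R'))^2\,ds\bigr)^{1/2}$, which vanishes as $R'\downarrow 0$ by the reformulation of (4). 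Combining the two regimes gives $\eta_u(\rho)\to 0$, that is, $u\in\mathrm{VMO}(B_R(0))$.

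The main obstacle is genuinely the first step: recognizing that (4) is secretly the square-mean radial oscillation condition $\tfrac1r\int_0^r(f(r)-f(x))^2\,dx\to 0$. Once that identity is available the far-from-origin estimate is elementary and the near-origin estimate is the textbook reduction to centered balls followed by one Cauchy--Schwarz. The only technical subtlety requiring care throughout is uniformity in $y$ in the far-from-origin regime, which is exactly why the argument must split on whether $|y|$ is bounded below. (The parenthetical claim that boundedness of $f$ lets one replace (1), (2), (4) by (3) alone is the trivial remark that $x\,|f'(x)|\to 0$ already forces $\tfrac1r\int_0^r x\,|f'(x)|\,dx\to 0$, which dominates hypothesis (4) up to the factor $2\|f\|_{\infty}$.)
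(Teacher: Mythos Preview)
Your proof is correct and follows essentially the same architecture as the paper's: split balls into those far from the origin (handled by the oscillation bound via $sf'(s)\to 0$) and those near the origin (inflated to a centered ball and controlled by the integration-by-parts identity hidden in hypothesis~(4)). The chief cosmetic differences are that you isolate the identity $\tfrac{1}{r}\int_0^r x(f(r)-f(x))f'(x)\,dx=\tfrac{1}{2r}\int_0^r(f(r)-f(x))^2\,dx$ up front and work directly in $\R^n$ with the $L^1$ modulus plus Cauchy--Schwarz, whereas the paper derives the same identity inside its lemma, works with the $L^2$ modulus $\eta_{2,u}^*$, and presents the argument for $n=1$ before remarking that the general case follows by a radial change of variables.
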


\noindent

Before we prove the theorem, we prove the following lemma. We will consider the case $n=1.$
The general case can be handled similarly by radial change of variables. In this case $u$ is an even function on
$\left[-r, r\right]$.

\begin{lemma}
If $f$ and $u$ satisfy the same hypotheses as in the last theorem, then
$$\psi \left( r\right) := \frac{1}{2r}\int_{-r}^{r}\left\vert u\left(
x\right) -u_{\left( -r,r\right) }\right\vert ^{2}dx\rightarrow 0\text{ as }%
r\rightarrow 0.$$
\end{lemma}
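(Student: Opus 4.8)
The plan is to use the evenness of $u$ to reduce $\psi(r)$ to a one-sided $L^2$ mean oscillation of $f$ on $(0,r)$, and then to dominate that quantity by exactly the expression that hypothesis~4 forces to zero. First I would observe that since $u(x)=f(|x|)$ is even and $(-r,r)$ is symmetric about the origin, the average is $u_{(-r,r)}=\frac{1}{r}\int_0^r f(x)\,dx=:f_r$ and $\psi(r)=\frac{1}{r}\int_0^r |f(x)-f_r|^2\,dx$. By hypothesis~1 we have $f\in L^2(0,r)$, so $f_r$ minimises $c\mapsto \frac{1}{r}\int_0^r |f(x)-c|^2\,dx$; concretely, one has the identity $\psi(r)=\frac{1}{r}\int_0^r (f(x)-f_r)\,(f(x)-f(r))\,dx$, and Cauchy--Schwarz then yields $\psi(r)\le \Psi(r)$, where $\Psi(r):=\frac{1}{r}\int_0^r |f(x)-f(r)|^2\,dx$. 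Here $f(r)$ is a legitimate finite comparison constant because $f\in C^1(0,R]$. Thus it suffices to prove $\Psi(r)\to 0$ as $r\downarrow 0$.

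Next I would integrate by parts in $\Psi(r)$ in a form tailored to hypothesis~4. Setting $h(x):=f(r)-f(x)$, so that $h(r)=0$ and $h'=-f'$, one has $\int_0^r h(x)^2\,dx=\left[\,x\,h(x)^2\,\right]_0^r-2\int_0^r x\,h(x)\,h'(x)\,dx$. The boundary term at $r$ vanishes since $h(r)=0$, and the boundary term at $0$ vanishes after expanding $x\,h(x)^2=x f(r)^2-2x f(r)f(x)+x f(x)^2$: the last summand tends to $0$ by hypothesis~2, and the middle one is bounded by $2|f(r)|\sqrt{x}\,\sqrt{x f(x)^2}\to 0$. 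Consequently $\Psi(r)=\frac{2}{r}\int_0^r x\,(f(r)-f(x))\,f'(x)\,dx$, which tends to $0$ precisely by hypothesis~4. This completes the proof that $\psi(r)\to 0$.

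For completeness I would also record the shortcut flagged parenthetically in the theorem — the case actually relevant to the coefficients $f_k$ of the counter-example section — namely that when $f$ is bounded one may bypass hypothesis~4 altogether. Given $\epsilon>0$, pick $\delta>0$ with $|t f'(t)|<\epsilon$ for $0<t<\delta$ (hypothesis~3); then for $0<x<r<\delta$ one gets $|f(r)-f(x)|=\left|\int_x^r \frac{t f'(t)}{t}\,dt\right|\le \epsilon\,\log(r/x)$, whence $\Psi(r)\le \frac{\epsilon^2}{r}\int_0^r (\log(r/x))^2\,dx=\epsilon^2\int_0^1 (\log(1/s))^2\,ds=2\epsilon^2$, which already forces $\Psi(r)\to 0$ and hence $\psi(r)\to 0$.

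I expect the only genuinely delicate point to be the vanishing of the boundary term at the origin in the integration by parts: this is exactly where the integrability hypotheses~1--2 earn their keep, and it explains why they are imposed even though hypothesis~3 alone governs the bounded case. Everything else — the reduction to the half-interval, the variance-minimisation bound, and the elementary evaluation $\int_0^1(\log(1/s))^2\,ds=2$ — is routine.
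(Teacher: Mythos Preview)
Your proof is correct and follows essentially the same route as the paper: reduce to the one-sided quantity $\frac{1}{r}\int_0^r|f(x)-f_r|^2\,dx$, replace the mean $f_r$ by the endpoint value $f(r)$, integrate by parts, and invoke hypothesis~4 for the remaining integral while hypothesis~2 kills the boundary term at the origin. The only noteworthy difference is how the replacement $f_r\rightsquigarrow f(r)$ is justified: the paper first uses hypothesis~3 to show $f_{(0,r)}-f(r)\to 0$ (via $f_{(0,r)}=f(r)-\frac{1}{r}\int_0^r xf'(x)\,dx$) and then applies $(a+b)^2\le 2a^2+2b^2$, whereas you use the variance-minimisation identity $\psi(r)=\frac{1}{r}\int_0^r(f-f_r)(f-f(r))$ together with Cauchy--Schwarz, which yields $\psi(r)\le\Psi(r)$ directly without appealing to hypothesis~3 at that step. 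Your added bounded-case shortcut is not in the paper's proof of the lemma but is consistent with the parenthetical remark in the theorem statement.
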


\begin{proof}
By integration by parts
\begin{alignat*}{1}
   f_{\left( 0,r\right) } &= \frac{1}{r}\int_{0}^{r}f\left( x\right) dx \\
& = \frac{1}{r}(xf(x)) \rule[-.12in]{.01in}{.34in}_{\; 0}^{\; r} - \frac{1}{r} \int_{0}^{r}xf^{\prime }(x) dx \\
& = f(r) - \frac{1}{r} \int_{0}^{r}xf^{\prime}(x) dx \;.
\end{alignat*}
From this equation along with our third assumption it follows that
$$f_{(0,r)} - f(r) \rightarrow 0 \ \ \text{as} \ \ r \downarrow 0.$$
Thus
\begin{alignat*}{1}
\psi(r)
&= \frac{1}{r} \int_{0}^{r} |f(x)-f_{(0,r)}|^2dx \\
& \leq \frac{2}{r} \int_{0}^{r} |f(x)-f(r)|^2dx + o(1) \ \text{as} \ r \downarrow 0\\
& \leq \frac{2}{r} \left(x(f(x)-f(r))^2\right)\rule[-.12in]{.01in}{.34in}_{\; 0}^{\; r}  \\
& \ \ \ \ - \frac{4}{r} \int_{0}^{r} x(f(x)-f(r))f'(x)dx+ o(1) \rightarrow 0
\end{alignat*}
which proves the lemma.
\end{proof}

\noindent
Now we define
$$\eta _{2,u}^{\ast }(r) =
\sup_{x\in B_{R}(0), \; 0<\sigma <r}\frac{1}{|B_{\sigma}(x) \cap B_{R}(0)|}
\int_{B_{\sigma}(x) \cap B_{R}(0)}\left|u(x) -u_{B_{\sigma}(x) \cap B_{R}(0) }\right|^{2}dx \;.$$
To prove the theorem, it suffices to show that
$$\eta _{2,u}^{\ast }\left( r\right) \downarrow 0 \ \text{as} \ r \rightarrow 0.$$

\begin{proof}
(for $n=1,R=1$)
We will write $(a,b)^{\ast}:=(a,b) \cap (-1,1)$.
To bound $\eta _{2,u}^{\ast}(r) $ (for $n=1,R=1),$ let
$$ \Psi (x_{0},\varepsilon) =
\frac{1}{|(x_{0}-\varepsilon ,x_{0}+\varepsilon)^{\ast}|}
      \int_{(x_{0}-\varepsilon ,x_{0}+\varepsilon) ^{\ast}} |u(x) - u_{(x_{0}-\varepsilon,x_{0}+\varepsilon) ^{\ast}}|^{2}dx.$$
Without loss of generality, we can assume $x_{0}\geq 0,$ and we observe that
$$\int_{a}^{b}\left| u(x) -u_{(a,b) }\right|^{2}dx
    =\min_{\lambda \in \mathbb{R}}\int_{a}^{b}\left|u(x)-\lambda \right|^{2}dx.$$
We split the proof into two cases:
\begin{enumerate}
    \item $0 \leq x_{0} < 2\varepsilon.$ We can take $\varepsilon < \frac{1}{3}.$
             Then $(x_{0}-\varepsilon,x_{0}+\varepsilon) \subset (-3\varepsilon ,3\varepsilon) \subset (-1,1)$ and so
             \begin{alignat*}{1}
                   \Psi (x_{0},\varepsilon)
                         &\leq \frac{1}{2\varepsilon}
                              \int_{x_{0}-\varepsilon}^{x_{0}+\varepsilon}\left|u(x) - u_{(-3\varepsilon, 3\varepsilon) }\right|^{2}dx\\
                         &\leq 3\frac{1}{6\varepsilon}
                              \int_{-3\varepsilon }^{3\varepsilon }\left|u(x) - u_{(-3\varepsilon, 3\varepsilon)}\right|^{2}dx\\
                         &\leq 3\psi (3\varepsilon)\\
                         &\rightarrow 0
              \end{alignat*}
              as $\varepsilon \rightarrow 0,$ by the lemma above.
     \item $2\varepsilon \leq x_{0}<1.$ Then
              $(x_{0}-\varepsilon, x_{0}+\varepsilon)^{\ast}\subset [\varepsilon,1]$ and
              $$\Psi (x_{0},\varepsilon) \leq \omega _{\varepsilon}^{2}(2\varepsilon)$$
              where
              $$\omega _{\varepsilon}(h) := \max_{\left| x-y\right| \leq h; \; x,y\in [\varepsilon,1] }\left| f(x) - f(y) \right|.$$
              Since $f\in C^{1}[\varepsilon,1],$
              $$\omega _{\varepsilon }(h) \leq h\cdot \max_{x \in [\varepsilon ,1] }\left|f^{\prime}(x) \right|.$$
              Now, if $f^{\prime}$ is bounded on $(0,1],$ then we have
              $\omega _{\varepsilon}(2\varepsilon) \leq c\varepsilon,$ which already gives us what we need.  Otherwise:
              $$\omega_{\varepsilon}(2\varepsilon)
                        = \tilde{h} \left|f^{\prime}(\xi _{\varepsilon}) \right|$$
              for some $\xi _{\varepsilon}\in [\varepsilon,1], \ \text{and some} \ \tilde{h} \in (0, 2\varepsilon].$
              Thus
              \begin{eqnarray*}
                   \omega_{\varepsilon}(2\varepsilon)
                        &\leq
                        &2\varepsilon \left|f^{\prime}(\xi _{\varepsilon}) \right| \
                                          \text{and so} \\
                   \omega _{\varepsilon}(2\varepsilon)
                        &\leq
                        &2\xi _{\varepsilon}\left|f^{\prime}(\xi _{\varepsilon})\right|.
               \end{eqnarray*}
               Also, $\xi _{\varepsilon}\rightarrow 0$ as $\varepsilon \rightarrow 0,$
               since $f^{\prime}$ is bounded away from the origin. Finally we note that our third assumption
               implies that 
               $2\xi_{\varepsilon}\left|f^{\prime}(\xi _{\varepsilon})\right| \rightarrow 0.$
               In any case, $\omega _{\varepsilon}(2\varepsilon) \rightarrow 0$ for $\varepsilon \rightarrow 0,$
               and this fact implies
               $$\sup_{x_{0}\in \left[ 0,1\right] }\Psi \left( x_{0},\varepsilon \right)
                    \rightarrow 0\text{ for }\varepsilon \rightarrow 0,$$
\end{enumerate}
Since $\Psi \rightarrow 0$ in both cases, we can conclude that $u\in \text{VMO}(B_{1}(0)).$
\end{proof}


\section*{Acknowledgments}
The first author would like to thank Chuck Moore for a useful conversation, Diego Maldonado for pointing out some useful
references about VMO, and Luis Caffarelli for bringing his attention to the work of Chiarenza, Frasca, and Longo.  The
second author would like to thank Dian Palagachev for helping with a reference.  Both authors would like to thank Hrant
Hakobyan for a useful observation, and Virginia Naibo for a careful reading of the manuscript along with many excellent
suggestions for improvements to the exposition. Both authors are very grateful to Marco Bramanti for explaining his result
which gives a sufficient condition to guarantee that a radial function belongs to VMO, and for allowing them to include it here.
Finally, both authors would like to thank the anonymous referee for his/her careful reading and excellent suggestions.



\end{document}